\let\oldmarginpar\marginpar
\renewcommand\marginpar[1]{\-\oldmarginpar[\raggedleft\footnotesize #1]%
{\raggedright\footnotesize #1}}
\newtheorem*{thma}{Theorem A}
\newtheorem*{thmb}{Theorem B}
\newtheorem{theorem}{Theorem}[section]
\newtheorem*{theorem*}{Theorem}
\newtheorem{lemma}[theorem]{Lemma}
\newtheorem{proposition}[theorem]{Proposition}
\newtheorem{corollary}[theorem]{Corollary}
\theoremstyle{definition}
\newtheorem{definition}[theorem]{Definition}
\newtheorem{remark}[theorem]{Remark}
\newcommand{\Z}{\mathbb{Z}}
\newcommand{\abs}[1]{|#1|}
\newcommand{\Bigabs}[1]{\Big|#1\Big|}
\newcommand{\Abs}[1]{\left|#1\right|}
\newcommand{\N}{\mathbb{d}}
\newcommand{\R}{\mathbb{R}}
\newcommand{\C}{\mathbb{C}}
\newcommand{\Q}{\mathbb{Q}}
\def\N{\mathbb{N}}
\def\Z{\mathbb{Z}}
\def\Q{\mathbb{Q}}
\def\R{\mathbb{R}}
\def\R{\mathbb{R}}
\def\C{\mathbb{C}}
\def\1{\mathbf{1}}
\newcommand{\dif}{\mathrm{d}}
\newcommand{\e}{\mathrm{e}}
\newcommand{\im}{\mathrm{i}}
\newcommand{\norm}[1]{\|#1\|}
\title{Balian-Low type theorems in finite dimensions}
\author[Nitzan]{Shahaf Nitzan$^1$}
\address{School of Mathematics, Georgia Institute of Technology, Atlanta GA 30332, USA}
\email{shahaf.nitzan@math.gatech.edu}
\thanks{\textcolor{white}{l}$^1$ The first author is supported by NSF grant DMS-1600726.}
\author[Olsen]{Jan-Fredrik Olsen$^2$}
\address{Centre for Mathematical Sciences, Lund University, P.O. Box 118, SE-221 00 Lund, Sweden}
\email{janfreol@maths.lth.se}
\thanks{\textcolor{white}{l}$^2$ The second author is partially  supported by KVA grant MG2016-0023.}
\keywords{Balian-Low theorem, finite dimensional frame, Gabor system, Riesz basis, time-frequency analysis, uncertainty principle}
\subjclass[2010]{42C15, 42A38, 39A12}
\begin{document}

\maketitle

\begin{abstract}
We formulate and prove finite dimensional analogs for the classical Balian-Low theorem,
and for a quantitative Balian-Low type theorem that, in the case of the real line, we obtained in a previous work. Moreover, we show that these results imply their counter-parts on the real line.
\end{abstract}

\section{Introduction}

\subsection{A Balian--Low type theorem in finite dimensions.}  Let $g\in L^2(\R)$. The Gabor system generated by $g$ with respect to the lattice $\Z^2$ is given by
\begin{equation}\label{gabor}
G(g)=\{e^{2\pi i nt}g(t-m)\}_{(m,n)\in\Z^2}.
\end{equation}
The  classical Balian-Low theorem  \cite{balian1981, battle1988, daubechies1990, low1985} states that if the Gabor system $G(g)$ is an  orthonormal basis, or a Riesz basis, in $L^2(\R)$, then $g$ must have much worse time-frequency localization than what the uncertainty principle permits. The precise formulation is as follows (see \cite{benedetto_heil_walnut1995} for a detailed discussion of the proof and its history).

\begin{thma}[Balian, Battle, Coifman,  Daubechies, Low, Semmes] \label{intro:classical blt} Let $g \in L^2(\R)$. If $G(g)$ is  an orthonormal basis or a Riesz basis in $L^2(\R)$, then
\begin{equation}\label{blt}
\int_{\R}|t|^2|g(t)|^2\dif t=\infty \quad\textrm{or}\quad
\int_{\R}|\xi|^2|\hat{g}(\xi)|^2\dif \xi=\infty.
\end{equation}
\end{thma}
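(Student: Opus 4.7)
The plan is to prove Theorem A by contradiction, assuming both integrals in \eqref{blt} are finite and deriving a contradiction via the Zak transform. I would introduce
\[
Zg(x,\xi) = \sum_{k\in\Z} g(x+k) e^{-2\pi i k\xi},
\]
recall that $Z$ extends to a unitary map from $L^2(\R)$ onto $L^2([0,1]^2)$, and verify its quasi-periodicity
\[
Zg(x+1,\xi)=e^{2\pi i\xi}Zg(x,\xi),\qquad Zg(x,\xi+1)=Zg(x,\xi).
\]
Then I would translate the Gabor-system hypothesis into a pointwise statement about $Zg$: since $Z$ intertwines the translation--modulation operators generating $G(g)$ with the modulations $e^{2\pi i(nx-m\xi)}$ on $L^2([0,1]^2)$, the system $G(g)$ is an orthonormal basis iff $|Zg|=1$ a.e., and a Riesz basis iff there exist $0<c\le C<\infty$ with $c\le |Zg|\le C$ a.e.

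Next I would use the finiteness of the two moment integrals to upgrade $Zg$ to a continuous function. Under $Z$, multiplication by $t$ and the Fourier multiplier $\xi$ translate to natural differential-type operators on the fundamental domain, so finiteness of $\int t^2|g|^2$ and $\int \xi^2|\hat g|^2$ yields that $Zg$ lies in the Sobolev-type space where both $\partial_x Zg$ and $\partial_\xi Zg$ are in $L^2$. Because the torus is two-dimensional the mixed-regularity gained from both partial derivatives being in $L^2$ is just enough (via a standard Fourier-series/Sobolev embedding argument on $\T^2$ using the quasi-periodicity to reduce to a genuinely periodic function after multiplying by a phase) to place $Zg$ in the continuous functions on $[0,1]^2$.

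The key step is then a topological/winding-number argument. Suppose $|Zg|\ge c>0$ everywhere; since $Zg$ is continuous and nonvanishing on the simply connected square $[0,1]^2$, it admits a continuous logarithm, i.e. $Zg(x,\xi) = e^{2\pi i\varphi(x,\xi)}\rho(x,\xi)$ with $\varphi$ continuous. Comparing the two boundary identifications forced by the quasi-periodicity along the two closed paths around the torus, one obtains two integer winding numbers whose combination is forced to be inconsistent: along the $\xi$-direction $Zg$ is periodic (winding number zero), while along the $x$-direction it picks up the factor $e^{2\pi i \xi}$ whose winding varies with $\xi$. Tracking these indices around the boundary of the square produces a contradiction (equivalently, one computes $\int d(\arg Zg)$ on the boundary and finds a nonzero value, contradicting the existence of a global continuous branch of $\log Zg$).

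I expect the topological step to be the cleanest; the main technical obstacle is the continuity upgrade in step two, where one must be careful that $H^1$-type regularity in both variables really does give a continuous representative on $\T^2$ — this is not automatic from a single $H^1$ hypothesis in two dimensions, but it does follow from the mixed integrability of $\partial_x Zg$ and $\partial_\xi Zg$ combined with the quasi-periodic boundary behavior. Finally, I would note that the orthonormal-basis case is the special instance $|Zg|\equiv 1$ of the Riesz-basis case, so a single argument covers both conclusions of \eqref{blt}.
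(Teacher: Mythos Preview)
Your outline is the classical Zak-transform route, but it contains the well-known gap that plagued the original Balian and Low arguments: the continuity upgrade in your second step does not go through. Having $\partial_x Zg,\partial_\xi Zg\in L^2$ places $Zg$ in $H^1$ of the two-torus (after dividing out the quasi-periodic phase), and $H^1$ in two dimensions does \emph{not} embed into $C^0$; this is exactly the critical Sobolev exponent, and there are explicit $H^1(\T^2)$ functions with genuine discontinuities. Your assertion that ``mixed integrability of $\partial_x Zg$ and $\partial_\xi Zg$ combined with the quasi-periodic boundary behavior'' yields a continuous representative is simply false, and the rest of the argument (continuous logarithm, winding number) collapses without it. The rigorous proofs in the literature --- Battle's operator-theoretic argument, or the Coifman--Semmes proof presented by Daubechies and surveyed in \cite{benedetto_heil_walnut1995} --- avoid continuity entirely and work with weak differentiability in a more delicate way; you would need to reproduce one of those arguments rather than the heuristic you have sketched.

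Separately, even a corrected version of this route would be entirely different from what the paper does here. The paper's proof of Theorem~A (Section~\ref{xcom6}) is designed to exhibit the classical statement as a \emph{corollary} of the finite-dimensional Theorem~\ref{finite blt}: for each $N$ one samples and periodises $g$ to obtain $S_NP_Ng_{(u,v)}\in\ell_2^d$, invokes Corollary~\ref{everyoneriesz} to get uniform Riesz bounds, applies Theorem~\ref{finite blt} to get a lower bound $c\log N$ on the discrete smoothness quantity, integrates over $(u,v)$, and uses Lemma~\ref{theshitlemmawhichihatehatehate} to dominate the result by the continuous integrals in \eqref{blt-2}. Letting $N\to\infty$ forces one of those integrals to diverge. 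So the paper never touches the topological step at all; the ``winding'' content is buried inside the proof of the finite-dimensional result via Lemma~\ref{lemma:jump 1}.
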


We note that by Parseval's identity,   condition   \eqref{blt}    is equivalent to saying that we must have either
\begin{equation}\label{blt-2}
\int_{\R}|\hat{g}'(\xi)|^2\dif \xi=\infty \quad\textrm{or}\quad
\int_{\R} |{g}'(t)|^2\dif t=\infty.
\end{equation}
That is, these integrals are considered infinite if the corresponding functions are not absolutely continuous, or if they do not have a derivative in $L^2$.

In the last 25 years, the Balian-Low theorem inspired a large body of work in time-frequency analysis, including, among others, a non-symmetric version \cite{benedetto_czaja2006, gautam2008, grochenig1996, nitzan_olsen2011}, an amalagam space version \cite{heil1990}, versions which discuss different types of systems \cite{daubechies_janssen1993, heil_powell2006, heil_powell2009illinois, nitzan_olsen2011},  versions not on lattices \cite{bourgain1988, grochenig_malinnikova2013}, and a quantified version \cite{nitzan_olsen2013}.  The latter result will be discussed in more detail in the second part of this introduction.

Although it provides for an excellent ``rule of thumbs'' in time-frequency analysis, the Balian--Low theorem is not adaptable to many applications since, in realistic situations, information about a signal is given by a finite dimensional vector rather than by a function over the real line. The question of whether a finite dimensional version of this theorem holds has been circling  among researchers in the area\footnote{For more on other uncertainty principles in the finite dimensional setting, we refer the reader to \cite{ghobber_jamming2011} and the references therein.}. In particular, Lammers and Stampe  pose this as the  ``finite dimensional Balian-Low conjecture''  in \cite{lammers_stampe2015}.
Our main goal in this  paper is to answer this question in the affirmative.

Let $N\in\N$ and denote $d=N^2$. We consider the space $\ell_2^d$ of all functions defined over the cyclic group $\Z_d:=\Z/d\Z$ with the normalization,
\begin{equation} \label{normal}
\|b\|_{d}=\frac{1}{N}\sum_{j=0}^{d-1}|b(j)|^2\qquad b=\{b(j)\}_{j=0}^{d-1}.
\end{equation}
To motivate this normalization,  let $g$ be a continuous function in $L^2(\R)$ and put $b(j) =  g({j}/{N})$, $j\in \Z\cap [-N^2/2,N^2/2)$.  That is, the sequence $b \in \ell_2^d$ consists of samples of  the function $g$, at steps of length ${1}/{N}$ over the interval $[-N/2, N/2]$. Then, for ``large enough'' $N$, the above $\ell^2$ norm can be interpreted as a Riemann sum approximating the $L^2(\R)$ norm of $g$. Note that in Section \ref{xcom2}, we define the finite Fourier transform $\mathcal{F}_d$ so that it is   unitary   on $\ell_2^d$.

Let $N\Z_d$ denote the set $\{ N k : k \in \Z \}$ modulo $d$. For $b\in \ell_2^d$, the Gabor system generated by $b$ with respect to  $(N\Z_d)^2$,   is given by
\begin{equation}\label{discrete gabor}
G_d(b):=\{e^{2\pi i \frac{\ell j}{d}}b(j- k)\}_{(k,\ell)\in  (N\Z_d)^2}.
\end{equation}
We point out that, with the choice $b(j) = g(j/N)$,  the discrete Gabor system $G_d(b)$ yields a  discretization of the Gabor system $G(g)$  restricted to the interval $[-N/2,N/2)$.

To formulate the Balian-Low theorem in this setting, we use a discrete version of   condition  \eqref{blt-2}. To this end, we denote the  discrete derivative of a function $b=\{b(j)\}_{j=0}^{d-1}\in \ell_2^d$   by
 \begin{equation*}
\Delta b:=\Big\{b(j+1)-b(j)\Big\}_{j=0}^{d-1},
 \end{equation*}
and put
\begin{equation} \label{laika in space}
	\alpha(N)=\inf \{\|{N}{\Delta b}\|_d^2+\|{N}{\Delta (\mathcal{F}_d b)}\|_d^2\},
\end{equation}
where the infimum is taken over all sequences $b\in \ell_2^d$ for which the system $G_d(b)$ is an orthonormal basis in $\ell_2^d$. We note that for the choice $b(j) = g(j/N)$,  samples of the derivative of $g$ at  the points ${j}/{N}$ are approximated by $N\Delta b$. Therefore, the expression inside of the infimum is a discretization of the integrals in the condition \eqref{blt-2}. Our finite  dimensional version of the Balian-Low theorem,  that answers the finite Balian-Low conjecture in the affirmitive, may now be formulated as follows.

\begin{theorem}\label{finite blt}
  There exist constants $c,C>0$ so that,  for all integers $N \geq 2$, we have
\[
c \log N\leq \alpha(N)\leq  C \log N.
\]
In particular, $\alpha(N)\rightarrow\infty$ as $N$ tends to infinity.

\end{theorem}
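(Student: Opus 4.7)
My plan is to reduce both bounds to an extremal problem for phase functions on the discrete torus $\Z_N\times\Z_N$, via the finite Zak transform. Define
$Z_d b(k,\ell) := \sum_{m=0}^{N-1} b(k+mN)\, e^{-2\pi i m \ell/N}$ for $(k,\ell)\in \Z_N\times \Z_N$. It satisfies the Parseval identity $\sum_{k,\ell}|Z_db(k,\ell)|^2 = N\sum_j |b(j)|^2$, the quasi-periodicity relations $Z_db(k+N,\ell) = e^{2\pi i\ell/N}Z_db(k,\ell)$ and $Z_db(k,\ell+N)=Z_db(k,\ell)$, and $Z_d(\mathcal F_d b)$ coincides with $Z_db$ up to a swap of coordinates and an explicit unimodular factor. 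The key structural fact is the discrete Zak criterion: $G_d(b)$ is an orthonormal basis of $\ell_2^d$ if and only if $|Z_db(k,\ell)|\equiv 1$ on $\Z_N\times\Z_N$.

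A short computation --- using that translation by one becomes the natural shift on the quasi-periodic extension of $Z_db$ --- gives the identity $\|N\Delta b\|_d^2 + \|N\Delta(\mathcal F_d b)\|_d^2 = \sum_{(k,\ell)}\bigl(|\partial_1 Z_db|^2 + |\partial_2 Z_db|^2\bigr)$, where $\partial_j$ denotes the discrete forward derivative of the quasi-periodic extension of $Z_db$. When $|Z_db|\equiv 1$, locally write $Z_db = e^{2\pi i \phi}$; the quasi-periodicity forces $\phi$ to have nontrivial monodromy --- winding $\pm 1$ around one of the two generators of the torus. Thus $\alpha(N)$ is comparable to the minimum Dirichlet energy $\mathcal D(\phi) := \sum_{k,\ell}\bigl(|\partial_1\phi|^2 + |\partial_2\phi|^2\bigr)$ over real phases $\phi$ on $\Z_N\times \Z_N$ with prescribed unit monodromy.

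\textbf{Upper bound.} Take a smoothed discretization of the unit vortex $\phi(k,\ell) \approx \tfrac{1}{2\pi}\Arg(k+i\ell)$, arranged so that all of the monodromy is concentrated on a single plaquette (i.e., the $\Arg$-branch cut is a single edge). Its discrete gradient decays like $1/\sqrt{k^2+\ell^2}$ away from that plaquette, so $\mathcal D(\phi) \asymp \sum_{0<k^2+\ell^2 \le N^2}(k^2+\ell^2)^{-1} \asymp \log N$. Inverting $Z_d$ produces an explicit $b$ with $G_d(b)$ an orthonormal basis realizing $\alpha(N)\le C\log N$.

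\textbf{Lower bound --- the main obstacle.} One must show $\mathcal D(\phi)\gtrsim \log N$ for \emph{every} phase with nontrivial monodromy. The plan is to minimize $\mathcal D$ under the topological constraint: the minimizer solves a discrete Poisson equation with a dipole source encoding the monodromy, and its Dirichlet energy is controlled, by the spectral representation on the torus, in terms of $\frac{1}{N^2}\sum_{(j,k)\neq(0,0)}\bigl(\sin^2(\pi j/N)+\sin^2(\pi k/N)\bigr)^{-1} \asymp \log N$, the familiar logarithmic infrared divergence of the discrete Laplacian in dimension two. The two delicate points are (i) formulating the discrete winding number so that it is well-defined for any function with $|Z_db|\equiv 1$ and is forced to be nonzero by the quasi-periodicity, and (ii) controlling the passage from $|\partial_j Z_db|$ to $|\partial_j \phi|$, which is only safe where $\phi$ has small local oscillation. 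This last point can be handled either by partitioning $\Z_N\times\Z_N$ into patches on which $\phi$ can be chosen single-valued, or by working throughout with the complex quantity $|\partial_j Z_db| = |e^{2\pi i(\phi(k+1,\ell)-\phi(k,\ell))}-1|$ and invoking the elementary convexity inequality $|e^{it}-1|\ge (2/\pi)|t|$ for $|t|\le \pi$, mirroring the chord-arc step in the Benedetto--Heil--Walnut proof of Theorem A.
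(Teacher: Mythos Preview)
Your reduction to the finite Zak transform and the identification of $\alpha(N)$ with (essentially) a discrete Dirichlet energy of a unimodular $N$-quasi-periodic function is correct and matches the paper; the claimed identity is only approximate (the relation $Z_d(\mathcal F_d b)(m,n)=e^{2\pi i mn/d}Z_d(b)(-n,m)$ introduces an $O(1)$ error), but this is harmless for the $\log N$ asymptotics --- see the paper's Proposition \ref{prop:alpha and beta}. Your upper bound via a discretized single vortex is essentially the paper's construction in Section \ref{xcom4-2}, which samples an explicit unimodular function with one singularity.

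For the lower bound your route diverges from the paper and, as written, has real gaps. First, the topological constraint is not a single ``winding $\pm1$ around a generator'': the quasi-periodicity forces the row holonomies $\sum_{k=0}^{N-1}\theta_1(k,\ell)\equiv \ell/N\pmod 1$ to vary with $\ell$, and the total plaquette vorticity $\sum n(k,\ell)$ is \emph{not} pinned to $1$ --- it equals $1$ minus the number of $\ell$ with $\theta_2(0,\ell)>1/2-1/N$, so it can be any integer in $\{1-N,\dots,1\}$. You must argue separately that either the vorticity is nonzero or the large bond angles already cost $\gtrsim\log N$, and neither case is addressed. Second, once you invoke the chord--arc bound $|e^{2\pi i\theta}-1|\asymp|\theta|$ with $\theta\in(-1/2,1/2]$, the resulting bond angles $(\theta_1,\theta_2)$ are \emph{not} the gradient of any phase $\phi$; they carry integer curl, so the quantity you call $\mathcal D(\phi)$ is not defined and the minimization is over pairs $(\theta,n)$ with $d\theta=n\in\Z$, not over phases with fixed monodromy. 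Third, the spectral step --- that the minimum of $\sum\theta_i^2$ over all admissible integer data is $\gtrsim\log N$ via the discrete Green's function --- is asserted, not proved; it is plausible (and related to XY-model vortex bounds) but needs a careful argument identifying the correct source term. The paper avoids all of this with an elementary ``jump'' lemma (Lemma \ref{lemma:jump 1}): any argument of a quasi-periodic function must jump by at least $1/4\pmod 1$ on every $K\times L$ subgrid. It then runs a dyadic multiscale argument (Section \ref{xcom4-1}), collecting $2^{j-1}$ such jumps at scale $2^j$ on disjoint rows and columns; Cauchy--Schwarz converts each jump into an energy contribution $\gtrsim 2^{-j}$, and summing over $j\le\log_2 N$ gives $\log N$. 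That argument is self-contained and sidesteps the branch-choice and vorticity bookkeeping that your approach must confront.
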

\begin{remark}\label{finite blt riesz}
Theorem \ref{finite blt} also holds  in the case that the infimum in $\alpha(N)$ is taken over all $b\in \ell_2^d$ for which the system  $G_d(b)$ is a  basis in $\ell_2^d$ with lower and upper Riesz basis bounds at least $A$ and at most $B$, respectively. In this case, the constants $c,C$ in Theorem \ref{finite blt} depend on $A$ and $B$. (For a precise definition of the Riesz basis bounds see Section \ref{xcom2}). The dependence on the Riesz basis bounds is necessary, in the sense that it can not be replaced by a dependence on the $\ell_2^d$ norm of $b$ (see Remark \ref{homeworld}).
\end{remark}
\begin{remark} The classical Balian-Low theorem  (Theorem A) follows as a corollary of Theorem \ref{finite blt}, as we show in Section \ref{xcom6}.
\end{remark}
\begin{remark}
	By restricting   to $N \geq N_0$, for large $N_0$, the constants in the above theorem improve. Indeed, combining Proposition \ref{prop:alpha and beta} with Remark \ref{unremarkable}, we have
	\[
\lim_{N_0\rightarrow \infty}\Big(\inf_{N\geq N_0}\frac{\alpha(N)}{\log N}\Big)\geq \frac{1}{4\log 2},
\]
	while combining the same proposition with Remark \ref{unremarkabletoo} yields the asymptotic bound
	\[
\lim_{N_0\rightarrow \infty}\Big(\sup_{N\geq N_0}\frac{\alpha(N)}{\log N}\Big)\leq 16\pi^2.
\]
\end{remark}

\subsection{A finite  dimensional quantitative Balian--Low type theorem.}
In \cite{nazarov1993}, F.\,Nazarov obtained the following quantitative version of the classical uncertainty principle: Let $g\in L^2(\R)$ and   $ \mathfrak{Q},\mathfrak{R}\subset\R$ be two sets of finite measure, then
\[
\int_{\R\setminus  \mathfrak{Q}}|g(t)|^2\dif t+ \int_{\R\setminus  \mathfrak{R}}|\hat{g}(\xi)|^2\dif \xi \geq De^{-C| \mathfrak{Q}||\mathfrak{R}|} \|g\|_{L^2(\R)}^2.
\]
In \cite{nitzan_olsen2013}, we obtained the following  quantitative Balian-Low theorem, which is a   modest  analog of  Nazarov's result for generators of Gabor orthonormal bases and,  more generally, Gabor Riesz bases.

\begin{thmb}[Nitzan, Olsen] \label{old quantitative} Let $g \in L^2(\R)$. If $G(g)$ is  an orthonormal basis or a Riesz basis then, for every   $Q,R>1$, we have
 \begin{equation}\label{quant-bltB}
\int_{|t|\geq  Q}|g(t)|^2\dif t+ \int_{|\xi|\geq R}|\hat{g}(\xi)|^2\dif  \xi \geq \frac{C}{ QR},
\end{equation}
where the constant $C>0$ depends only on the Riesz basis bounds of $G(g)$.
\end{thmb}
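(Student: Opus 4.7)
The plan is to reduce the question to a statement about the Zak transform
\[
Zg(t,\omega) := \sum_{k\in\Z} g(t-k)\,e^{2\pi i k\omega},
\]
which extends to a unitary map $Z : L^2(\R) \to L^2([0,1]^2)$ and satisfies the Riesz basis characterization: $G(g)$ is a Riesz basis with bounds $A\leq B$ if and only if $A \leq |Zg|^2 \leq B$ a.e.\ on $[0,1]^2$. Writing $\eps^2 := \int_{|t|\geq Q}|g(t)|^2\,\dif t + \int_{|\xi|\geq R}|\hat g(\xi)|^2\,\dif \xi$, the goal is to show $\eps^2 \geq C/(QR)$, with constant depending only on $A,B$.

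The first step is to build an approximant $\tilde g \in L^2(\R)$ whose Zak transform has a controlled bi-degree structure on the torus. A natural attempt sets $g_1 := g\cdot \chi_{[-Q,Q]}$ and defines $\tilde g$ by $\hat{\tilde g}:=\hat{g_1}\cdot\chi_{[-R,R]}$. Straightforward $L^2$ bounds give $\|g-\tilde g\|_2^2 \lesssim \eps^2$, and unitarity of $Z$ then yields $\|Zg - Z\tilde g\|_{L^2([0,1]^2)}^2 \lesssim \eps^2$. The dual Poisson-summation identity
\[
Z\tilde g(t,\omega) = e^{-2\pi i t\omega}\sum_{n\in\Z}\hat{\tilde g}(\omega+n)\,e^{2\pi i nt}
\]
shows that $e^{2\pi i t\omega}Z\tilde g(t,\omega)$ is, for each $\omega\in[0,1]$, a trigonometric polynomial in $t$ of degree $\lesssim R$; while the original series in $k$ has $\lesssim Q$ nonzero terms as soon as $\tilde g$ is essentially supported in $[-Q,Q]$. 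An additional truncation step (accepting $O(\eps^2)$ further error, and ideally using smooth bump cutoffs in place of $\chi_{[-Q,Q]}$, $\chi_{[-R,R]}$ so that band-limiting does not fully destroy time-support) makes $Z\tilde g$ genuinely of bidegree $\lesssim (Q,R)$ on $[0,1]^2$.

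The second step combines the lower bound $|Zg|^2 \geq A$ with Chebyshev to yield
\[
\bigl|\{(t,\omega)\in[0,1]^2 : |Z\tilde g(t,\omega)|^2 < A/2\}\bigr| \lesssim \eps^2/A.
\]
On the other hand, the quasi-periodicity $Z\tilde g(t+1,\omega)=e^{-2\pi i\omega}Z\tilde g(t,\omega)$ forces the winding of $\arg Z\tilde g$ along any horizontal loop to be $-1$, so $Z\tilde g$ must vanish somewhere on the torus (this is the classical Balian--Low obstruction). Quantitatively, for a bi-degree $(Q,R)$ trigonometric polynomial on $\T^2$ constrained to have this winding, a two-variable Jensen-type inequality (or a Mahler-measure computation) forces the sub-level set $\{|Z\tilde g|^2 < A/2\}$ to have measure $\gtrsim 1/(QR)$. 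Combining with the Chebyshev bound gives $\eps^2 \gtrsim 1/(QR)$, as required.

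The main obstacle is the last step: extracting the sharp $1/(QR)$ scaling from the bi-degree constraint and the winding. A naive one-variable Jensen argument on horizontal or vertical slices gives only $1/Q$ or $1/R$; capturing the product structure requires either a genuinely two-variable Jensen/Mahler estimate tuned to the quasi-periodic boundary conditions, or an integration-by-parts along a carefully chosen contour on $\T^2$ combined with Bernstein-type derivative bounds for trigonometric polynomials. Setting up Step 1 with smooth rather than sharp cutoffs, so that the iterated time/frequency truncation genuinely produces a bi-degree-$(Q,R)$ object, is a second delicate point that must be handled in tandem.
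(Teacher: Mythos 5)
Your general template (approximate $g$ by a localized $\tilde g$, pass to the Zak transform, use Chebyshev plus a quantitative version of the topological obstruction to the Zak transform being continuous and nonvanishing) is in the same family as what the paper does, but the specific mechanism you propose has a fatal gap, and your "main obstacle'' step is genuinely unfilled.

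The fatal gap is in Step 1. You want $Z\tilde g$ to be a genuine trigonometric polynomial of bi-degree $\lesssim (Q,R)$. Unwinding what that means: $Z\tilde g(t,\cdot)$ being a trigonometric polynomial of degree $\lesssim Q$ in $\omega$ for each $t\in[0,1]$ forces $\tilde g$ to be compactly supported (in $[-Q-1,Q+1]$, say), while $e^{2\pi i t\omega}Z\tilde g(t,\omega)$ being a trigonometric polynomial of degree $\lesssim R$ in $t$ for each $\omega$ forces $\hat{\tilde g}$ to be compactly supported. No nonzero $L^2$ function has both properties (uncertainty principle), and replacing the sharp indicators by smooth bumps does not help: a compactly supported function cannot be band-limited regardless of the cutoff used. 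So "$Z\tilde g$ genuinely of bi-degree $(Q,R)$'' is not achievable, and the whole reduction to a statement about trigonometric polynomials of bounded bi-degree on $\T^2$ collapses. You could try to salvage this by allowing rapidly decaying rather than truncated Fourier coefficients, but then the Jensen/Mahler part (which is the second gap, and which you flag yourself) would have to be re-proved for objects that are only approximately of finite degree, with uniform rather than $L^2$ error control; none of that is sketched.

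For comparison, the paper avoids the polynomial approximation entirely. It derives Theorem B from its finite-dimensional counterpart (Theorem \ref{finite quantitative BLT 1}, proved via Theorem \ref{finite quantitative BLT 1 -rb}). The finite-dimensional argument replaces "$Z\tilde g$ has low bi-degree'' by "$Z_d(b*\phi)$ has small discrete first-variable variation'' (Lemma \ref{convolution inequality 0}), with $\phi$ a discretized Fej\'er-type mollifier having $\sum|\Delta\phi|\lesssim R$; the corresponding smoothing in the second variable is obtained by convolving $\mathcal{F}_d b$ with a $\psi$ of small variation and then using the Zak/Fourier intertwining (\ref{Zak and Fourier}). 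The topological obstruction is made quantitative not by a Jensen- or Mahler-type inequality but by the combinatorial jump lemma (Lemma \ref{lemma:jump 1} and Corollary \ref{lem:infidel1}): on any $K\times L$ sublattice, the argument of an $N$-quasi-periodic function must jump by at least roughly $1/4$ in one step, so the modulus must change by $\gtrsim\sqrt{A}$; since the smoothed versions cannot change that fast, $Z_d(b)$ and its smoothed version must differ by a fixed amount on $\gtrsim N^2/(QR)$ well-separated lattice points (Lemma \ref{conv-lemma}). Transferring back to $L^2(\R)$ is done by the periodize-and-sample operators $S_NP_N g_{(u,v)}$ and Proposition \ref{proposition:trump2}, integrating over the residual coordinate $(u,v)$. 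This route avoids both of the issues in your plan: it never requires simultaneous compact support, and its quantitative obstruction is a discrete/combinatorial lemma with an explicit constant rather than a two-variable Jensen estimate.

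In short: your outline identifies the right ingredients (localize, Chebyshev, a quantitative version of the winding obstruction), but the concrete realization via bi-degree trigonometric polynomials is blocked by the uncertainty principle in Step 1, and Step 3 is a conjectured lemma rather than a proof. Replacing "low bi-degree'' by "small variation under convolution with a Fej\'er kernel'' and replacing Jensen by the jump lemma is exactly how the paper (and the earlier \cite{nitzan_olsen2013}) gets around both obstacles.
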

This quantitative version of the Balian-Low theorem implies the classical Balian-Low theorem  (Theorem A), as well as several extensions of it, including the non-symmetric cases and the amalgam space cases  referred to above. Here, we prove the following  finite dimensional version  of this theorem.

\begin{theorem} \label{finite quantitative BLT 1}
	There exists a constant $C>0$ such that the following holds. Let $N \geq {200}$, and let $b \in \ell_2^d$ (where $d=N^2$) be  such that $G_d(b)$ is an orthonormal basis in $\ell_2^d$.  Then, for all  positive integers $Q, R \in [1, N/{16}]$,  we have	
	\begin{equation*}
		\frac{1}{N}  \sum_{j=N  Q}^{d-1} \abs{b(j)}^2 + \frac{1}{N} \sum_{k= N  R}^{d-1} \abs{\mathcal{F}_d b(k)}^2   \geq \frac{C}{ QR}.
	\end{equation*}
\end{theorem}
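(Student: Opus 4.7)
My plan is to pass to the finite Zak transform and reduce the theorem to a quantitative uncertainty principle on $\Z_N\times\Z_N$. Define
\[
Zb(m,n) = \sum_{k=0}^{N-1} b(m+Nk)\, e^{-2\pi i kn/N}, \qquad (m,n) \in \Z_N\times \Z_N.
\]
Two standard facts drive the argument: (a) $G_d(b)$ is an orthonormal basis of $\ell_2^d$ if and only if $|Zb(m,n)|=1$ for all $(m,n)$; and (b) a direct manipulation of the definition yields the transformation rule
\[
Z(\mathcal{F}_d b)(m,n) = e^{2\pi i mn/N^2}\, Zb(-n\bmod N,\, m),
\]
so that on the Zak side $\mathcal{F}_d$ acts as a transpose-and-twist.

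I would next translate the two tail hypotheses into concentration statements for the inverse Fourier coefficients of $Zb$. Writing $j = m + Nk$ with $m,k\in [0,N)$, the first sum in Theorem~\ref{finite quantitative BLT 1} equals $\frac{1}{N}\sum_{k\geq Q}\sum_m |b(m+Nk)|^2$, and for each $m$ the sequence $k\mapsto b(m+Nk)$ is (up to normalization) the length-$N$ inverse DFT of the row $n\mapsto Zb(m,n)$. Hence Plancherel rephrases the first hypothesis as: the row inverse DFTs of $Zb$ carry $\ell^2$-mass at most a constant multiple of $\epsilon_1$ on $\{(m,k): k\geq Q\}$, where $\epsilon_1$ denotes the first sum in the theorem. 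Using (b), the second hypothesis becomes the same statement for the columns of $Zb$ twisted by the phase $e^{-2\pi i mn/N^2}$: the twisted column inverse DFTs carry $\ell^2$-mass at most a constant multiple of $\epsilon_2$ on $\{(m,k): k\geq R\}$.

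The heart of the proof is then the following quantitative uncertainty estimate: if $M$ is an $N\times N$ unimodular matrix whose row and twisted-column inverse DFTs have $\ell^2$-mass at most $\epsilon_1$ and $\epsilon_2$ outside the strips $\{k<Q\}$ and $\{k<R\}$ respectively, then $\epsilon_1+\epsilon_2\geq C/(QR)$. I plan to prove this by first approximating $M$ by a matrix $\tilde M$ whose row and twisted-column inverse DFTs are supported in $[0,Q)$ and $[0,R)$, at a cost of $O(\sqrt{\epsilon_1+\epsilon_2})$ in $\ell^2$-norm. Then $|\tilde M|^2$ is a trigonometric polynomial on $\Z_N\times\Z_N$ of degree less than $2Q$ in the first variable and less than $2R$ in the second, close to the constant function $1$; the forced near-vanishing of its non-constant Fourier coefficients yields a system of autocorrelation identities on the coefficient array of $\tilde M$. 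The main obstacle is extracting the sharp $1/(QR)$ rate from these identities via a Cauchy-Schwarz and Plancherel argument, and the hypotheses $N\geq 200$ and $Q,R\leq N/16$ provide exactly the room needed to keep the relevant $(2Q-1)\times(2R-1)$ autocorrelation window inside $\Z_N\times\Z_N$ without cyclic wrap-around, which would otherwise weaken the estimate. Once the uncertainty principle is established, the theorem follows immediately from the Zak-side translation of the two tails.
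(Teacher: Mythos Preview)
Your Zak-transform setup is correct and matches the paper's: the unimodularity of $Z_db$, the relation $Z_d(\mathcal{F}_d b)(m,n)=e^{2\pi i mn/d}\,Z_db(-n,m)$, and the identification of the two tail sums with $\ell^2$-mass of row and twisted-column inverse DFTs of $Z_db$ are all right. The gap is in the heart of the argument.

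First, a technical issue. The sequential projection $\tilde M=P_2P_1M$ (truncate row inverse DFTs to $[0,Q)$, then truncate twisted-column inverse DFTs to $[0,R)$) gives $\tilde M$ the column property but in general destroys the row property, since $P_1$ and $P_2$ do not commute. Thus $|\tilde M|^2$ need not be a trigonometric polynomial of the claimed degree in both variables. One could instead project onto the intersection of the two subspaces, but then the error bound $O(\sqrt{\epsilon_1+\epsilon_2})$ is no longer immediate.

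More seriously, even granting an $\tilde M$ with both support properties and $|\tilde M|^2$ close to $1$, you have not said how the bound $1/(QR)$ emerges. Note that $|\tilde M|^2\equiv 1$ is itself a trigonometric polynomial of degree zero, so the obstruction cannot come from $|\tilde M|^2$ alone; it must come from the interaction of the row and twisted-column constraints on $\tilde M$ itself, and specifically from the twist $e^{2\pi i mn/d}$. Your phrase ``autocorrelation identities \ldots\ via Cauchy--Schwarz and Plancherel'' does not indicate how the twist enters, nor where the rate $1/(QR)$ comes from. One can check that an \emph{exactly} unimodular $\tilde M$ satisfying both support constraints cannot exist (each row would be forced to be a single character, and then the twist produces an impossible congruence), but turning this rigidity into a quantitative $1/(QR)$ bound for a merely \emph{approximately} unimodular $\tilde M$ is precisely the hard step, and you have not supplied it.

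The paper takes a different route. It uses a topological obstruction: the argument of any non-vanishing $N$-quasi-periodic function on $\Z_d^2$ must jump by at least (roughly) $1/4$ somewhere on every translate of every coarse $K\times L$ sublattice (Lemma~\ref{lemma:jump 1}, Lemma~\ref{douche}, Corollary~\ref{lem:infidel1}; this is a winding-number argument). Choosing $K\sim R$ and $L\sim Q$, one compares $Z_db$ with $Z_d(b\ast\phi)$ for a kernel $\phi$ of controlled total variation; the convolved version varies too slowly to jump, so the two must differ by at least a fixed $\delta$ at some point of each translated sublattice. This produces $\gtrsim N^2/(QR)$ such points (Lemma~\ref{conv-lemma}), and unitarity of $Z_d$ together with the Fourier-side support of $\phi$ converts the Zak-side lower bound into the tail estimate. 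Here the twist appears as the quasi-periodicity relation, and the winding number is exactly what delivers the quantitative rate you were unable to extract.
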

\begin{remark}\label{finite quantitative BLT 2}
Theorem \ref{finite quantitative BLT 1}  holds for general  bases as well.   In this case, the constant $C$, as well as the conditions on the sizes of $N$, $Q$ and $R$, depend on the Riesz basis bounds. This more general version of Theorem \ref{finite quantitative BLT 1} is formulated as Theorem
\ref{finite quantitative BLT 1 -rb} in Section 4. (For a precise definition of the Riesz basis bounds see Section \ref{xcom2}).
\end{remark}
\begin{remark} As we show in Section \ref{xcom6}, the quantitative Balian-Low theorem (Theorem B) follows as a corollary of Theorem \ref{finite quantitative BLT 1}.
\end{remark}
\begin{remark} \label{schmabel2}
The conditions appearing in Theorem \ref{finite quantitative BLT 1} are not optimal, but rather, these conditions were chosen to avoid a cumbersome presentation. In particular, we point out that a more delicate estimate in Lemma \ref{rho-lemma}, or the use of a different function, will improve the condition $N \geq 200$. Some modifications in the proof of Lemma \ref{conv-lemma} will improve this condition as well. In addition, a careful analysis of the proof will allow one to improve each one of the conditions $N \geq 200$ and $Q,R \leq N/16$ at the expense of making the constant $C$ smaller. In fact, any two of the previous conditions can be improved at the cost of the third.
\end{remark}

\subsection{Finite  dimension Balian--Low type theorems over rectangular lattices.}  The conclusions   of the classical Balian--Low theorem (Theorem A) and its quantitative version (Theorem B), still hold  if we replace Gabor systems over the square lattice $\Z^2$     by Gabor systems over the rectangular lattices $\lambda\Z\times \frac{1}{\lambda}\Z$, where $\lambda>0$.   Indeed, this is immediately seen  by making an appropriate dilation of the generator function $g$. In the finite dimensional case, however, such dilations are in general not possible. The question of which finite rectangular lattices allow Balian--Low type theorems   therefore has an interest in its own right. We address this in the extensions of of theorems \ref{finite blt} and \ref{finite quantitative BLT 1} formulated below.

Let $M,N\in\N$ and denote $d=MN$. We consider the space $\ell_2^{(M,N)}$ of all functions defined over the cyclic group $\Z_d:=\Z/d\Z$ with   normalization
\begin{equation} \label{normal-rec-intro}
\|b\|^{2}_{(M,N)} =\frac{1}{M}\sum_{j=0}^{d-1}|b(j)|^2\qquad b=\{b(j)\}_{j=0}^{d-1}.
\end{equation}
This non-symmetric normalization is motivated by the fact that if $g$ is a continuous function in $L^2(\R)$ and $b(j) =  g({j}/{  M} )$, $j\in \Z\cap [-MN/2,MN/2)$, then the above $\ell^2$-norm can be interpreted as a Riemann sum for the $L^2(\R)$ norm of $g$ over the interval $[-N/2, N/2]$. Note that in Section \ref{xcom7}, we define the finite Fourier transform $\mathcal{F}_{(M,N)}$ so that it is   a unitary operator from $\ell_2^{(M,N)}$ to $\ell_2^{(N,M)}$.

Let $b\in\ell_2^{(M,N)}$. The Gabor system generated by $b$ with respect to   $M\Z_{d} \times N\Z_{d}$ is given by

\begin{equation}\label{discrete gabor rect}
G_{(M,N)}(b):=\{e^{2\pi i \frac{\ell j}{d}}b(j- k  )\}_{(k,\ell)\in  M\Z_{d} \times  N\Z_{d} }.
\end{equation}

We point out that
making the choice $b(j) = g(j/M)$,  the discrete Gabor system $G_{(M,N)}(b)$ yields a ${1}/{M}$--discretization of the Gabor system $G(g)$ restricted to $[-N/2,N/2]$.

To formulate the discrete Balian--Low theorem in this setting,
we put
\begin{equation} \label{laika in space-rec-intro}
\alpha(M,N)=\inf \{\|{M}{\Delta b}\|_{(M,N)}^{2}+\|{N}{\Delta (\mathcal{F}_{(M,N)} b)}\|_{(N,M)}^{2}\},
\end{equation}
where the infimum is taken over all sequences $b\in \ell_2^{(M,N)}$ for which the system $G_{(M,N)}(b)$ is an orthonormal basis in $\ell_2^{(M,N)}$. We note that for the choice $b(j) = g(j/M)$, the expression inside of the infimum is a discretization of the integrals in   condition \eqref{blt-2}.

We  are now ready to formulate the extension of Theorem \ref{finite blt} to Gabor systems over rectangles.
\begin{theorem}\label{finite blt-rec}
There exist constants $c,C>0$ so that, for all integers $M,N \geq 2$, we have\[
c \log \min\{M,N\}\leq \alpha(M,N)\leq  C \log \min\{M,N\}.
\]
In particular, $\alpha(M,N)\rightarrow\infty$ as $\min\{M,N\}$ tends to infinity.
\end{theorem}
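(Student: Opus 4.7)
My plan is to reduce Theorem \ref{finite blt-rec} to the already-proved square case of Theorem \ref{finite blt}, leveraging two structural features. The first is a time-frequency symmetry: since $\mathcal{F}_{(M,N)}$ is unitary from $\ell_2^{(M,N)}$ to $\ell_2^{(N,M)}$ and intertwines the generators of the translations and modulations appearing in \eqref{discrete gabor rect}, the system $G_{(M,N)}(b)$ is an orthonormal basis of $\ell_2^{(M,N)}$ if and only if $G_{(N,M)}(\mathcal{F}_{(M,N)}b)$ is an orthonormal basis of $\ell_2^{(N,M)}$, while the two terms inside the infimum in \eqref{laika in space-rec-intro} exchange roles. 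Hence $\alpha(M,N)=\alpha(N,M)$, and I may assume throughout that $M\leq N$, so that $\min\{M,N\}=M$.

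For the upper bound I would lift a near-optimal generator from the square case. Given $b^{\ast}\in\ell_2^{M^2}$ realizing $\alpha(M)\leq C\log M$ from Theorem \ref{finite blt}, the key observation is that the modulation frequencies $\ell/d$ with $\ell\in N\Z_d$ are exactly the multiples of $1/M$ that appear in the square Gabor system, whereas only the set of translates is enlarged from $M$ to $N$ elements. I would build $b\in\ell_2^{MN}$ by a suitable periodic or zero-padded extension of $b^{\ast}$, tuned so that the extra translates remain mutually orthogonal, and then normalize according to \eqref{normal-rec-intro}; the derivative norms in \eqref{laika in space-rec-intro} then inherit the bound $O(\log M)$ directly from those of $b^{\ast}$. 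The case $N=kM$ is cleanest, since $d=kM^2$ is naturally an integer multiple of $M^2$; the general case follows by rounding $N$ to the nearest multiple of $M$ at the cost of bounded constants, or by invoking the symmetry of the first step.

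The lower bound is the main obstacle. I would approach it via the discrete Zak transform, which sends an orthonormal Gabor basis generator to a unimodular function $Zb$ on the torus $\Z_M\times\Z_N$. Plancherel applied in each variable separately translates the two discrete derivative norms of \eqref{laika in space-rec-intro} into Sobolev-type norms of $Zb$ in the $\Z_M$-direction and of its partial Fourier transform in the $\Z_N$-direction. A topological winding argument then forces $\alpha(M,N)\geq c\log M$: the unimodular function $Zb$ must wrap nontrivially around the unit circle along some coordinate direction, and the Sobolev-type cost of such a wrap along the shorter side is of order $\log M$. An alternative is to replay the lower-bound proof of Theorem \ref{finite blt} verbatim, the point being that the logarithmic factor there stems from the smaller of the two Zak directions rather than from the hypothesis $M=N$. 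The delicate point will be the bookkeeping of the asymmetric normalization \eqref{normal-rec-intro}, so as to confirm that the final lower bound scales with $M=\min\{M,N\}$ and not with $N$ or with the total dimension $d=MN$.
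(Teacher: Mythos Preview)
Your symmetry reduction $\alpha(M,N)=\alpha(N,M)$ is correct, and the paper uses it as well. Beyond that, however, both halves of your proposal have real gaps.

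For the upper bound, neither periodic nor zero-padded extension of a square-case generator $b^{\ast}\in\ell_2^{M^2}$ produces an orthonormal-basis generator for $G_{(M,N)}$. For instance, if $N=kM$ and $b$ is the $M^2$-periodic extension of $b^{\ast}$, then a direct computation of $Z_{(M,N)}(b)(m,n)$ shows that the sum over $j\in[0,N-1]$ factors through $\sum_{r=0}^{k-1}e^{2\pi i rn/k}$, which vanishes whenever $k\nmid n$; hence $|Z_{(M,N)}(b)|$ is far from unimodular and $G_{(M,N)}(b)$ is not even a basis. Zero-padding has analogous problems. Your fallback of ``rounding $N$ to the nearest multiple of $M$'' does not make sense here: the statement concerns each fixed pair $(M,N)$, and there is no mechanism to pass between different values of $N$. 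The paper bypasses all of this by working at the Zak-transform level rather than at the level of $b$: it takes the same continuous unimodular function $G(x,y)$ used in Proposition~\ref{upperten}, sets $Z_{(M,N)}(b)(m,n)=G(m/M,n/N)$, and redoes the derivative estimates directly. This is the natural move, since orthonormality is exactly the pointwise condition $|Z_{(M,N)}(b)|\equiv 1$.

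For the lower bound, your sketch is too thin to assess, and ``replay verbatim'' does not quite work because of the asymmetric normalization in $\beta(b,M,N)$. The paper does not rerun the dyadic jump-collection argument on the rectangle. Instead (taking $M>N$), it tiles the $M\times N$ rectangle by roughly $M/N$ disjoint $N\times N$ square sets $\mathrm{Lat}_k=\{(k+[sM/N],t):0\le s,t\le N-1\}$, observes that $W=Z_{(M,N)}(b)$ restricted to each such set is $N$-quasi-periodic, and invokes the \emph{already-proved} square bound (Theorem~\ref{finite blt with betta}) on each tile. A Cauchy--Schwarz step handles the fact that consecutive columns of a tile are $[M/N]$ apart rather than adjacent, and the factor $M/N$ in the definition of $\beta(b,M,N)$ absorbs the number of tiles. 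That bookkeeping is exactly the ``delicate point'' you flag but do not carry out.
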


 Similarly, the corresponding extension of Theorem \ref{finite quantitative BLT 1} may be formulated as follows.
\begin{theorem} \label{finite quantitative BLT 1-rec}
	{There exists a constant $C>0$ such that the following holds. Let}
	 $M,N \geq {200}$, {and l}et $b \in \ell_2^{(M,N)}$ be such that $G_{(M,N)}(b)$ is an orthonormal basis in $\ell_2^{(M,N)}$.
	Then, for all  positive integers $Q \leq N/{16}$, $ R \leq M/{16}$, we have	
	\begin{equation*}
		\frac{1}{M}  \sum_{j= M  Q}^{d -1} \abs{b(j)}^2 + \frac{1}{N} \sum_{k= N  R}^{d -1} \abs{\mathcal{F}_{(M,N)} b(k)}^2   \geq \frac{C}{ QR}.
	\end{equation*}

\end{theorem}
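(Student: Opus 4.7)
The plan is to adapt the proof of Theorem \ref{finite quantitative BLT 1} (the square case $M=N$) to the present rectangular setting by keeping $M$ and $N$ as independent parameters throughout the argument. The statement is symmetric under the swap $(M,b,j,Q)\leftrightarrow(N,\mathcal{F}_{(M,N)}b,k,R)$, since $\mathcal{F}_{(M,N)}:\ell_2^{(M,N)}\to\ell_2^{(N,M)}$ is a unitary that carries the Gabor orthonormal basis $G_{(M,N)}(b)$ to the Gabor orthonormal basis $G_{(N,M)}(\mathcal{F}_{(M,N)}b)$ (up to a modulation and reflection), so one may assume $M\leq N$ whenever convenient.

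The main tool is the Zak transform adapted to the rectangular lattice $M\Z_d\times N\Z_d$, which sends $b\in \ell_2^{(M,N)}$ to a function on $\Z_M\times \Z_N$ via
\[
Zb(j,k)=\sum_{\ell=0}^{N-1}b(j+\ell M)\,e^{-2\pi i\ell k/N},\qquad (j,k)\in\Z_M\times\Z_N,
\]
and for which the orthonormal basis property of $G_{(M,N)}(b)$ is equivalent to $|Zb|$ being constant. This plays the same structural role as in the square case and reduces the quantitative estimate to the question of how concentrated $b$ and $\mathcal{F}_{(M,N)}b$ can be simultaneously, under the constraint $|Zb|\equiv \mathrm{const}$.

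The estimate is then produced by pairing $b$ against cutoff/test functions $\rho$ on $\Z_d$ that are concentrated on $[-MQ,MQ]$ and whose Fourier transforms are essentially concentrated on $[-NR,NR]$, in the spirit of Lemmas \ref{rho-lemma} and \ref{conv-lemma}. The dimensional bookkeeping works out cleanly because $MQ\cdot NR=QR\cdot d$, so the mass/uncertainty trade-off produces a lower bound of order $1/(QR)$, matching the right hand side. The hypotheses $Q\leq N/16$ and $R\leq M/16$ ensure that both cutoffs fit comfortably inside $\Z_d$ and leave enough room for the smoothing step, while $M,N\geq 200$ ensures that the discrete approximations of continuous constructions used in those lemmas remain accurate.

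The main obstacle is the bookkeeping: every step in the proof of Theorem \ref{finite quantitative BLT 1} that implicitly used $M=N$ must be revisited to confirm that it continues to hold with two independent parameters, and one must verify that the constant $C$ can be taken universal, independent of $M$ and $N$. The construction of the test function $\rho$ and the Bernstein-type smoothness estimates are the places that require the most care, since the natural scales on the time and frequency sides are now genuinely different: the test function on $\Z_d$ must balance two distinct windows of sizes $MQ$ and $NR$ rather than the single scale $NQ\sim NR$ that sufficed in the square case.
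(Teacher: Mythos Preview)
Your overall plan is correct and matches the paper's approach: the proof of Theorem~\ref{finite quantitative BLT 1-rec} is obtained by rerunning the square-case argument with $M$ and $N$ kept independent, via the rectangular Zak transform $Z_{(M,N)}$ and the rectangular analogue of Lemma~\ref{conv-lemma} (stated in the paper as Lemma~\ref{conv-lemma-rec}).

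One correction to your final paragraph: you do \emph{not} need a single test function that simultaneously balances windows of sizes $MQ$ and $NR$, and this is not where the difficulty lies. Exactly as in the square case there are two separate convolution kernels, one on each side. With $\rho$ as in Lemma~\ref{rho-lemma} and $\Phi(t)=R\rho(Rt)$, $\Psi(t)=Q\rho(Qt)$, one takes
\[
\phi=S_M P_N\Phi\in\ell_2^{(M,N)},\qquad \psi=S_N P_M\Psi\in\ell_2^{(N,M)}.
\]
Each kernel handles only its own scale; the bounds $\sum_j|\Delta\phi(j)|\le 10R$ and $\sum_j|\Delta\psi(j)|\le 10Q$ follow from Lemma~\ref{derder} exactly as before; and Proposition~\ref{proposition:trump1-rec}(ii) gives $\mathcal{F}_{(M,N)}\phi=1$ on $[0,NR/2]\cup[d-NR/2,d-1]$ and $\mathcal{F}_{(N,M)}\psi=1$ on $[0,MQ/2]\cup[d-MQ/2,d-1]$. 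With these choices the paper records that the remaining argument is ``essentially word by word, the same'' as for Theorem~\ref{finite quantitative BLT 1 -rb}. So the step you flagged as most delicate is in fact no harder than in the square case; the only genuine bookkeeping is in the rectangular version of Corollary~\ref{lem:infidel1} and Lemma~\ref{conv-lemma}, where one replaces $\sigma_s=[sN/K]$ by $\sigma_s=[sM/K]$ while keeping $\omega_t=[tN/L]$.
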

We point out that remarks \ref{finite blt riesz} and \ref{finite quantitative BLT 2} also hold for theorems \ref{finite blt-rec} and \ref{finite quantitative BLT 1-rec}, respectively. That is,  these theorems can be extended to generators of general bases with the constants depending only on the Riesz basis bounds. 
Remark \ref{schmabel2} also holds in this case.

\subsection{The structure of the paper.}  In Section \ref{xcom2}, we discuss some preliminaries, in particular the finite and continuous Zak transform and their use in characterizing  orthonormal Gabor bases and Riesz Gabor bases. In Section \ref{xcom3}, we present two improved versions of a lemma we first proved in \cite{nitzan_olsen2013}.  These results  quantify the discontinuity of the argument of a quasi-periodic function. In Section \ref{xcom4}, we apply these lemmas to prove Theorem \ref{finite blt}, while in Section \ref{xcom5}, we use them to prove Theorem \ref{finite quantitative BLT 1}. In Section \ref{xcom6}, we show how the Balian-Low theorem (Theorem A) and its quantitative version (Theorem B) can be obtained from their finite dimensional analogs. Finally, in Section \ref{xcom7}, we discuss theorems \ref{finite blt-rec} and \ref{finite quantitative BLT 1-rec}.  In the most part, we give only a sketch of the proofs for the  rectangular lattice case,  as they are very similar to the proofs we present for the square lattice case.

\section{Preliminaries} \label{xcom2}

\subsection{Basic notations, and the continuous and finite Fourier transforms.}

Throughout the paper, we usually denote by $f$ a function defined over the real line, and by $g$  a function defined over the real line which is a generator of a Gabor system. Similarly, we usually denote by $a$ a discrete function in $\ell^d_2$, and by $b$ a function in $\ell^d_2$ which is a generator of a Gabor system.

For $f \in L^1(\R)$, and with the usual extension to $f \in L^2(\R)$, we use the Fourier transform
\begin{equation*}
	\mathcal{F}f(\xi)  = \int_\R f(t) \e^{-2\pi \im \xi t} \, \dif \xi.
\end{equation*}
We let $\mathcal{S}(\R)$ denote the Schwartz class of functions $\phi$ which are infinitely many times differentiable, and that satisfy $\sup_{t \in \R} \abs{t^k \phi^{(\ell)}(t)} < \infty$ for all $k, \ell \in \N$.

Recall from  the introduction that for $N\in\N$ and $d=N^2$, we denote by $\ell_2^d$ the space of all functions defined over the cyclic group $\Z_d:=\Z/d\Z$ with the normalization 
\[
\|a\|^2_{d}=\frac{1}{N}\sum_{j=0}^{d-1}|a(j)|^2, \qquad a=\{a(j)\}_{j=0}^{d-1}\in \ell_2^d.
\]

For   $a \in \ell_2^d$, we use the finite Fourier transform
\begin{equation*}
	\mathcal{F}_d (a)  (k) =  \frac{1}{N} \sum_{j=0}^{d-1} a(j) \e^{-2\pi \im \frac{jk}{d}}.
\end{equation*}
With the chosen normalization, the finite Fourier transform is unitary on $\ell_2^d$.  We define  the periodic convolution of $a, b \in \C^d$,
  by
\begin{equation*}
	(a \ast b)(k) = \frac{1}{N} \sum_{j=0}^{d-1} a(k-j) b(j),
\end{equation*}
and  note the convolution relation $\mathcal{F}_d(a \ast b) = \mathcal{F}_d (a) \cdot  \mathcal{F}_d (b)$.   Observe that, for the choice $a(j) = f(j/N)$, the discrete Fourier transforms and convolutions yield natural discretizations of their respective counterparts on $\R$.

Also, recall   that for a sequence   $a\in \ell_2^d$, we denote the discrete derivative by $\Delta a(j) = a(j+1) -  a(j)$.
From time to time, we encounter sequences depending on more than one variable, say $a\big(k +  \psi (s)\big)$ where $\psi$ is some function depending on the integer $s$. In this case, we write $\Delta_{(s)}$ if we want to indicate that the difference is to be taken with respect to $s$. That is, $\Delta_{(s)} a\big(k+ \psi (s)\big) = a\big(k +  \psi (s+1)\big) - a\big(k + \psi (s)\big)$.

We will also  consider functions   $W : \Z^2_d \rightarrow \C$  for which we use the notations
\begin{align*}
	\Delta W(m,n) &:=   W(m + 1,n) - W(m,n) \qquad \text{and} \\[2mm]
	\Gamma W(m,n) &:=   W(m,n+1) - W(m,n).
\end{align*}
Finally, we let $\ell_2([0,N-1]^2)$ denote the space of sequences supported on $\Z^2 \cap [0,N-1]^2$ with norm
\begin{equation*}
	\frac{1}{d} \sum_{m,n=0}^{N-1} \abs{W(m,n)}^2,
\end{equation*}
where, as usual, $d=N^2$. We note that this normalization can be related to the process of sampling an $L^2$
function over $[0, 1]^2$, on the vertices of  squares of side length $1/N$, and computing the corresponding
Rieman sum.

\subsection{The continuous and finite Zak transforms}
On $L^2(\R)$, the Zak transform is defined as follows.  
\begin{definition} Let $f \in L^2(\R)$. The \emph{continuous Zak transform} of $f$ is given by
	\begin{equation*}
		Zf(x,y) = \sum_{k\in \Z} f(x-k)\e^{2 \pi \im ky}, \quad (x,y) \in \R^2.
	\end{equation*}
\end{definition}
We summarise the basic properties of the continuous Zak transform in the following lemma. Proofs for these properties, as well as further discussion of the Zak transform, can be found, e.g., in \cite[Chapter 8]{grochenig2001}.
\begin{lemma} \hspace{0mm} \label{ztprop...} Let $f \in L^2(\R)$.  The following hold.
\begin{itemize}
	\item[(i)] The Zak transform is quasi-periodic on $\R^2$ in the sense that
\begin{equation} \label{cond:qp}
	Zf(x+1,y) = \e^{2\pi \im y} Zf(x,y) \quad \text{and} \quad Zf(x,y+1) = Zf(x,y).
\end{equation}
In particular, this means that the function $Zf$ is determined by its values on $[0,1]^2$.
	\item[(ii)] The Zak transform is a unitary operator from $L^2(\R)$ onto $L^2([0,1]^2)$.
	\item[(iii)] The Zak transform and the Fourier  transform satisfy the relation
	\begin{equation*}
		Z (\mathcal{F}f)(x,y) = \e^{2\pi \im xy} Zf(-y,x).
	\end{equation*}
	\item[(iv)]  For  $\phi \in \mathcal{S}(\R)$, the  Zak transform satisfies the convolution relation
\begin{equation*}
	Z (f \ast  \phi) = Z (f) \ast_1 \phi,
\end{equation*}
where the subscript of $\ast_1$ indicates that the convolution is taken with respect to the first variable of the  Zak transform.
\end{itemize}

\end{lemma}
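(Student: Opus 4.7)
The plan is to handle parts (i)--(iv) in order, first establishing everything on the dense subspace $\mathcal{S}(\R)$ where the defining series converges absolutely and termwise manipulations are legal, then extending to $L^2(\R)$ by density and continuity.

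For (i), I would prove both quasi-periodicity relations by direct manipulation of the defining series. For the first, reindex $k \mapsto k+1$ in $Zf(x+1,y) = \sum_k f(x+1-k)e^{2\pi \im ky}$ to pull out the factor $e^{2\pi \im y}$. For the second, note that $e^{2\pi \im k(y+1)} = e^{2\pi \im ky}$ since $k \in \Z$. These are legitimate for $f \in \mathcal{S}(\R)$ and persist after the extension in (ii), where one uses the identities in the pointwise a.e.\ sense.

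For (ii), the key computation is that the series $\sum_k f(x-k)e^{2\pi \im ky}$ is, for each fixed $x$, a Fourier series in $y$ with Fourier coefficients $f(x-k)$. Applying Parseval on $[0,1]$ in the $y$-variable yields
\begin{equation*}
\int_0^1 |Zf(x,y)|^2 \, \dif y = \sum_{k \in \Z} |f(x-k)|^2,
\end{equation*}
and a further integration over $x \in [0,1]$ tiles the real line, giving $\|Zf\|_{L^2([0,1]^2)} = \|f\|_{L^2(\R)}$. This shows isometry on the dense subspace $\mathcal{S}(\R)$ (say), so $Z$ extends to an isometry on $L^2(\R)$. For surjectivity, given $F \in L^2([0,1]^2)$, expand $F(x,\cdot)$ in a Fourier series $\sum_k c_k(x) e^{2\pi \im ky}$, define $f$ on $\R$ by $f(x-k) := c_k(x)$ for $x \in [0,1]$, $k \in \Z$, and verify $f \in L^2(\R)$ with $Zf = F$.

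For (iii), I would work first for $f \in \mathcal{S}(\R)$, where the Poisson summation formula applies. Writing out $Z(\mathcal{F}f)(x,y) = \sum_k \hat{f}(x-k)e^{2\pi \im ky}$ and applying Poisson summation (to the sequence $k \mapsto \hat f(x-k)$, equivalently to $t \mapsto f(t)e^{-2\pi \im xt}$ evaluated on $\Z$), one obtains $\sum_k f(-y-k)e^{2\pi \im x(y+k)} = e^{2\pi \im xy}\sum_k f(-y-k)e^{2\pi \im kx}$, which is exactly $e^{2\pi \im xy} Zf(-y,x)$. Both sides are bounded operators on $L^2(\R)$ by (ii), so the identity extends by density. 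For (iv), I would simply exchange the sum over $k$ with the convolution integral: for $\phi \in \mathcal{S}(\R)$ this is justified by the rapid decay of $\phi$ together with the $L^2$-integrability of $Zf$, yielding $\sum_k e^{2\pi \im ky}\int_\R f(x-t-k)\phi(t)\, \dif t = Z(f * \phi)(x,y)$ on one side and $(Zf *_1 \phi)(x,y)$ on the other. The main obstacle here is minor: ensuring the interchange in (iv) and the convergence of the series in (iii) are handled cleanly outside of $\mathcal{S}(\R)$, but both are standard consequences of (ii) once established for Schwartz $f$.
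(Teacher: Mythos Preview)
Your argument is correct and follows the standard textbook route (density, Parseval in the second variable for unitarity, Poisson summation for the Fourier relation, and Fubini for the convolution identity). Note, however, that the paper does not actually prove this lemma: it simply cites \cite[Chapter~8]{grochenig2001}, where precisely this kind of argument is carried out, so there is nothing to compare beyond observing that your sketch matches the classical proof found there.
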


Next, we discuss a  Zak transform for $\ell_2^d$ which appears in, e.g., \cite{auslander_gertner_tolimieri1992}.
\begin{definition} \label{definition:finite zak}
Let $N\in\Z$ and set $d=N^2$. The \emph{finite Zak transform} of $a\in \ell_2^d$,  with respect to $(N\Z_d)^2$, is given by
	\begin{equation*}
		Z_d (a) (m,n)  =  \sum_{j=0}^{N-1}  a(m-Nj) \e^{2\pi \im \frac{jn}{N}}, \qquad (m,n) \in \Z_d^2.
	\end{equation*}
\end{definition}

Note that with this definition, $Z_d(a)$ is well-defined as a function on $\Z_d^2$ (that is, it is $d$-periodic separately in each variable).

The basic properties of the finite Zak transform mirror closely those of the continuous Zak transform and are stated in the following lemma. Parts (i), (ii) and (iii) of this Lemma can be found as theorems 1, 3 and 4 in \cite{auslander_gertner_tolimieri1992}.  Part (iv) follows immediately from the definitions of the Zak transform and the convolution.
\begin{lemma} \hspace{0mm} \label{lemma:basics on finite Zak} Let $N\in \N$, $d=N^2$ and $a \in \ell_2^d$. Then the following hold.
\begin{itemize}
	\item[(i)]  The function $Z_d(a)$   is $N$-quasi-periodic on $\Z_d^2$ in the sense that
\begin{equation} \label{cond:qp2}
\begin{aligned}
	Z_d(a)(m + N,n) &= \e^{2\pi \im \frac{n}{N}}Z_d(a)(m,n),  \\[2mm]
	 Z_d(a)(m,n+N) &=  Z_d (a)\,(m,n).
\end{aligned}
\end{equation}
 In particular,  $Z_d(a)$ is determined by its values on the set $\Z^2 \cap [0,N-1]^2$.

	\item[(ii)] The   transform $Z_d$ is a unitary operator from $\ell_2^d$ onto $\ell_2 ([0,N-1]^2)$.
	\item[(iii)]  The finite Zak transform and the finite Fourier transform satisfy the relation
\begin{equation} \label{Zak and Fourier}
	Z_d (\mathcal{F}_d a)(m,n) =   \, \e^{2\pi \im \frac{mn}{d}} Z_d(a) (-n,m).
\end{equation}
	\item[(iv)] The finite Zak transform satisfies the convolution relation
\begin{equation*}
	Z_d (a \ast  \phi) = Z_d(a) \ast_1 \phi,\qquad a,  \phi \in \ell_2^d.
\end{equation*}
where the subscript of $\ast_1$ indicates that the convolution is taken with respect to the first variable of the finite Zak transform.
\end{itemize}
\end{lemma}

\begin{remark}
 We will make use of a somewhat more general property than the $N$-quasi periodicity. Namely, we will be interested in functions $W : \Z^2_d \rightarrow \C$ satisfying
\begin{equation} \label{cond:qpuptoconstant}
\begin{aligned}
	W(m + N,n) &=  \eta \,\e^{2\pi \im \frac{n}{N}}W(m,n),  \\[2mm]
	 W(m,n+N) &=  W(m,n).
\end{aligned}
\end{equation}
where $\eta$ is a unimodular constant.   In particular, we note that if a function is $N$-quasi-periodic then any translation of it satisfies the relations \eqref{cond:qpuptoconstant}. For easy reference to this property we will call a function satisfying it \textit{$N$-quasi-periodic up to a constant}.
\end{remark}

We will make use of the following lemma,  which is a finite  dimensional analog of inequality (16) from \cite{nitzan_olsen2013}.
 \begin{lemma} \label{convolution inequality 0} Let $N\in\N$ and $d=N^2$. Suppose that $a, \phi \in \ell_2^{d}$ and $k\in \N$, then it holds that
	\begin{equation}
	\begin{aligned}
		\abs{Z_d(a \ast  \phi)(m+k,n)-Z_d(a \ast  \phi)(m,n)}
		&\leq \frac{k}{N} \norm{Z_d (a)\,}_{L^\infty} \sum_{j=0}^{d-1} \abs{\Delta \phi(j)}.
	\end{aligned}
	\end{equation}
\end{lemma}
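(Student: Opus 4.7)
The idea is to rewrite $Z_d(a \ast \phi)$ using the convolution relation of Lemma \ref{lemma:basics on finite Zak}(iv), and then convert the finite difference of $Z_d(a \ast \phi)$ into a telescoping sum of finite differences of $\phi$.

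More concretely, by part (iv) of Lemma \ref{lemma:basics on finite Zak} together with the change of summation variable $j \mapsto m - j$ in the definition of convolution, I would write
\begin{equation*}
Z_d(a \ast \phi)(m,n) = \bigl(Z_d(a) \ast_1 \phi\bigr)(m,n) = \frac{1}{N}\sum_{j=0}^{d-1} Z_d(a)(j,n)\,\phi(m-j).
\end{equation*}
This puts the dependence on $m$ entirely on the $\phi$-factor, which is the crucial move: it lets the $m \mapsto m+k$ shift be absorbed into $\phi$.

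Next, for each $j$ I would telescope
\begin{equation*}
\phi(m+k-j) - \phi(m-j) = \sum_{\ell=0}^{k-1} \Delta \phi(m+\ell-j),
\end{equation*}
substitute this into the difference $Z_d(a \ast \phi)(m+k,n) - Z_d(a \ast \phi)(m,n)$, pull $Z_d(a)$ out in sup-norm by the triangle inequality, and swap the order of summation.

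The last step is to note that, for each fixed $\ell$, the inner sum $\sum_{j=0}^{d-1} |\Delta \phi(m+\ell-j)|$ is just a reindexing of $\sum_{j=0}^{d-1} |\Delta \phi(j)|$ because $\phi$ (and hence $\Delta \phi$) is $d$-periodic; the outer sum over $\ell$ then produces the factor $k$. Combined with the prefactor $1/N$, this yields exactly the stated bound. There is no real obstacle here — the only thing to check carefully is that the two available forms of the convolution are interchanged so that the difference in $m$ lands on $\phi$ rather than on $Z_d(a)$, which is what makes the $\Delta \phi$ (and not $\Delta Z_d(a)$) appear on the right-hand side.
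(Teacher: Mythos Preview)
Your proof is correct and follows essentially the same approach as the paper: both use the convolution relation $Z_d(a\ast\phi)=Z_d(a)\ast_1\phi$ from Lemma \ref{lemma:basics on finite Zak}(iv) to place the $m$-dependence on $\phi$, then telescope $\phi(m+k-j)-\phi(m-j)$ into a sum of $k$ single differences and apply the triangle inequality together with $d$-periodicity. The paper phrases this more compactly as $\Delta_k Z_d(a\ast\phi)=Z_d(a)\ast_1(\Delta_k\phi)$ followed by the triangle inequality on $\sum|\Delta_k\phi|$, but the content is identical.
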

\begin{proof}
For a function $a\in \ell_2^d$, write $\Delta_ka(n)=a(n+k)-a(n)$.
Property (iv) of Lemma \ref{lemma:basics on finite Zak} implies that $\Delta_k Z_d(a \ast  \phi)=Z_d(a)\ast_1(\Delta_k \phi)$. The result now follows by applying the triangle inequality to $\sum|\Delta_k \phi|$.
\end{proof}

\subsection{Gabor Riesz bases and the Zak transform}
A system of vectors $\{f_n\}$ in a separable Hilbert space $H$ is called a \textit{Riesz basis} if it is the image of an orthonormal basis under a bounded and invertible linear transformation $T:H\mapsto H$. Equivalently,  the system $\{f_n\}$  is a Riesz basis if and only if it is complete in $H$ and satisfies the inequality
\begin{equation}\label{rb}
A\sum| c_n|^2\leq\|\sum  c_nf_n\|^2\leq B\sum| c_n|^2
\end{equation}
for all finite sequences of complex numbers $\{c_n\}$, where $A$ and $B$ are positive constants. The largest $A$ and smallest $B$ for which (\ref{rb}) holds are called the lower and upper Riesz basis bounds, respectively. We note that every basis in a finite dimensional space is a Riesz basis.

The proof for Part (i) of the following proposition can be found, e.g., in \cite[Corollary 8.3.2(b)]{grochenig2001}, while part (ii) can be found in \cite[Theorem 6]{auslander_gertner_tolimieri1992}.
\begin{proposition} \label{gaboriesz}
\begin{itemize}
\item[(i)] Let $g\in L^2(\R)$. Then, $G(g)$ is a Riesz basis in $L^2(\R)$ with Riesz basis bounds $A$ and $B$ if and only if $A\leq |Zg(x,y)|^2\leq B$ for almost every $(x,y)\in[0,1]^2$.
\item[(ii)] Let $N\in \N$, $d=N^2$ and $b\in \ell_2^d$. Then, $G_d(b)$ is a basis in $\ell_2^d$ with Riesz basis bounds $A$ and $B$ if and only if $A\leq |Z_d(b)(m,n)|^2\leq B$ for all $(m,n) \in  [0,N-1]^2 \cap \Z^2$.

\end{itemize}
\end{proposition}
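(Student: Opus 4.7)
The plan is to reduce both parts to a classical fact about multiplication operators: if $U : H \to K$ is a unitary operator between Hilbert spaces, then a system $\{f_\alpha\}$ in $H$ is a Riesz basis with bounds $A,B$ if and only if $\{Uf_\alpha\}$ is a Riesz basis of $K$ with the same bounds; and if $\{e_\alpha\}$ is an orthonormal basis of $L^2(X,\mu)$, then $\{\varphi \cdot e_\alpha\}$ is a Riesz basis of $L^2(X,\mu)$ with bounds $A,B$ if and only if $A \leq |\varphi|^2 \leq B$ $\mu$-a.e. So the whole task is to compute how $Z$ (respectively $Z_d$) acts on a Gabor element and verify that the images form a $\varphi$-multiple of an orthonormal basis.

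For part (i), first establish the two intertwining identities
\begin{align*}
Z\bigl(g(\cdot-m)\bigr)(x,y) &= \e^{-2\pi\im m y} Zg(x,y),\\
Z\bigl(\e^{2\pi \im n\cdot}g\bigr)(x,y) &= \e^{2\pi \im n x} Zg(x,y),
\end{align*}
both of which follow directly from the definition of $Z$ together with the quasi-periodicity in Lemma~\ref{ztprop...}(i). Combining them shows that
\[
Z\Bigl(\e^{2\pi\im n\cdot}g(\cdot-m)\Bigr)(x,y) \;=\; \e^{2\pi\im(nx-my)}\,Zg(x,y).
\]
Since $\{\e^{2\pi\im(nx-my)}\}_{(m,n)\in\Z^2}$ is an orthonormal basis of $L^2([0,1]^2)$ and $Z$ is unitary by Lemma~\ref{ztprop...}(ii), $G(g)$ is a Riesz basis in $L^2(\R)$ with bounds $A,B$ exactly when multiplication by $Zg$ is a bounded invertible map on $L^2([0,1]^2)$ with norms of the operator and its inverse bounded suitably, which by the classical multiplier characterization is equivalent to $A \leq |Zg(x,y)|^2 \leq B$ a.e. on $[0,1]^2$.

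For part (ii), the strategy is identical but with the finite Zak transform. Using Definition~\ref{definition:finite zak} and the $N$-quasi-periodicity from Lemma~\ref{lemma:basics on finite Zak}(i), compute
\begin{align*}
Z_d\bigl(b(\cdot - k)\bigr)(m,n) &= \e^{-2\pi\im \frac{kn}{d}}\, Z_d(b)(m,n) \quad\text{for } k \in N\Z_d, \\
Z_d\bigl(\e^{2\pi\im \frac{\ell \cdot}{d}} b\bigr)(m,n) &= \e^{2\pi\im \frac{\ell m}{d}}\, Z_d(b)(m,n) \quad\text{for } \ell \in N\Z_d,
\end{align*}
so that the image under $Z_d$ of the Gabor element indexed by $(k,\ell)\in (N\Z_d)^2$ is $\e^{2\pi\im(\ell m - k n)/d}\,Z_d(b)(m,n)$. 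Restricting to $(m,n) \in [0,N-1]^2\cap\Z^2$ (which determines the whole function by quasi-periodicity, and by Lemma~\ref{lemma:basics on finite Zak}(ii) is exactly the support range of the unitary image space $\ell_2([0,N-1]^2)$), the exponentials $\{\e^{2\pi\im(\ell m-kn)/d}\}_{(k,\ell)\in(N\Z_d)^2}$ form (after rescaling) an orthonormal basis of this $N^2$-dimensional space. Hence $G_d(b)$ is a basis of $\ell_2^d$ with Riesz bounds $A,B$ iff pointwise multiplication by $Z_d(b)$ is invertible on $\ell_2([0,N-1]^2)$ with the appropriate bounds, i.e.\ iff $A\leq |Z_d(b)(m,n)|^2 \leq B$ for every $(m,n)\in[0,N-1]^2\cap\Z^2$.

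The main obstacle is purely bookkeeping: one must be careful about the normalization conventions (the $1/N$ in both the finite $\ell^2$ norm and the finite Fourier transform) and about which indices run over $N\Z_d$ versus the fundamental domain $[0,N-1]^2$, so that the intertwined exponentials really do form a genuine orthonormal basis with the chosen inner products. Once the correct conventions are tracked, both parts collapse to the same one-line multiplier argument.
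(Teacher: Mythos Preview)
Your proposal is correct and is the standard argument. Note, however, that the paper does not actually prove this proposition: it simply cites \cite[Corollary 8.3.2(b)]{grochenig2001} for part~(i) and \cite[Theorem 6]{auslander_gertner_tolimieri1992} for part~(ii). What you have written is essentially the argument found in those references, so there is no meaningful divergence in approach to report.

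One small bookkeeping point worth tightening: with the paper's normalization $\|W\|^2 = \frac{1}{d}\sum_{m,n=0}^{N-1}|W(m,n)|^2$ on $\ell_2([0,N-1]^2)$, the exponentials $\{\e^{2\pi\im(qm-pn)/N}\}_{p,q=0}^{N-1}$ (writing $k=Np$, $\ell=Nq$) are already orthonormal as stated---no rescaling is needed. This is exactly what makes the Riesz bounds match the pointwise bounds on $|Z_d(b)|^2$ without any extra constants, so your parenthetical ``after rescaling'' can be dropped.
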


\subsection{Relating continuous and finite signals}

In the introduction, we motivated our choices of normalizations by relating   finite signals to   samples of   continuous ones. In this subsection, we   formulate this relation precisely.

Fix $N \in \mathbb{N}$ and let $d=N^2$. For a function $f$ in the Schwartz class $\mathcal{S}(\R)$, we denote its $N$-periodisation by
\begin{equation*}
	P_N f(t)  = \sum_{\ell= - \infty}^\infty  f(t+ \ell N),
\end{equation*}
and   the $N$-samples of a continuous $N$-periodic function $h$ by
\begin{equation*}
	S_N \, h = \Big\{ h \Big(\frac{j}{N}\Big) \Big\}_{j=0}^{d-1}.
\end{equation*}

First we relate these   operators to the Fourier transform and the Zak transform via Poisson--type formulas. We note that part (i) of the following proposition is stated without proof in  \cite{auslander_gertner_tolimieri1991}.

\begin{proposition}\label{proposition:trump1}
	\hspace{2mm}
For $f$ in the Schwartz class $\mathcal{S}(\R)$,   the following hold.
	\begin{itemize}
\item[(i)]   For every $N\in\N$ and $(m,n)\in\Z^2$, we have
		\begin{equation} \label{potus}
			Z_{d} (S_N {P_N f})(m,n) = Zf(m/N, n/N).
		\end{equation}
		\item[(ii)] For every $N\in\N$, we have
		\begin{equation} \label{eq:*}
			  \mathcal{F}_{d}S_N P_N f = S_N P_N \mathcal{F} f.
		\end{equation}
	\end{itemize}
\end{proposition}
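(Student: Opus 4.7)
The proposition naturally splits into two parts; I would prove part (i) by direct computation and then obtain part (ii) as a formal consequence of (i) combined with the intertwining of the Zak and Fourier transforms established earlier.

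For part (i), the plan is to unwind the definitions. Writing out $Z_d(S_N P_N f)(m,n)$ using Definition \ref{definition:finite zak} produces
\[
\sum_{j=0}^{N-1} P_N f\!\left(\tfrac{m}{N}-j\right) \e^{2\pi \im \frac{jn}{N}}
= \sum_{j=0}^{N-1}\sum_{\ell \in \Z} f\!\left(\tfrac{m}{N}-j+\ell N\right)\e^{2\pi \im \frac{jn}{N}}.
\]
The goal is then to reindex this double sum as a single sum over $\Z$. Setting $k = j-\ell N$, every integer is hit exactly once as $(j,\ell)$ ranges over $\{0,\dots,N-1\}\times\Z$, and crucially $\e^{2\pi \im jn/N}=\e^{2\pi \im kn/N}$ since $\ell n\in\Z$. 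The sum collapses to $\sum_{k\in\Z} f(m/N-k)\e^{2\pi \im kn/N}=Zf(m/N,n/N)$, which is the claim. The Schwartz assumption ensures absolute convergence, so Fubini justifies the reordering.

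For part (ii), since $Z_d$ is unitary (hence injective) by Lemma \ref{lemma:basics on finite Zak}(ii), it suffices to check that $Z_d$ applied to both sides of \eqref{eq:*} agrees on $\Z_d^2$. Applying Lemma \ref{lemma:basics on finite Zak}(iii) to the left-hand side gives
\[
Z_d(\mathcal{F}_d S_N P_N f)(m,n) = \e^{2\pi \im \frac{mn}{d}}\, Z_d(S_N P_N f)(-n,m),
\]
and part (i) identifies this with $\e^{2\pi \im \frac{mn}{d}} Zf(-n/N,m/N)$. By Lemma \ref{ztprop...}(iii), this equals $Z(\mathcal{F} f)(m/N,n/N)$. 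Finally, because $\mathcal{F}$ maps $\mathcal{S}(\R)$ into itself, part (i) applies to $\mathcal{F} f$ and yields $Z_d(S_N P_N \mathcal{F} f)(m,n)$. Thus both sides of \eqref{eq:*} have the same finite Zak transform, so they coincide.

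There is no real obstacle: the only place requiring care is the change of summation index in part (i), where one must be sure that the reindexing $k=j-\ell N$ is a bijection between $\{0,\dots,N-1\}\times \Z$ and $\Z$ and that the phase factor is insensitive to the choice of $\ell$. Given this, part (ii) is essentially a bookkeeping exercise using the two intertwining formulas already at our disposal.
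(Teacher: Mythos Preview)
Your proof is correct and follows essentially the same approach as the paper: part (i) is the same direct reindexing computation, and part (ii) is deduced from part (i) applied to both $f$ and $\mathcal{F}f$ together with the Zak--Fourier intertwining relations of Lemma~\ref{ztprop...}(iii) and Lemma~\ref{lemma:basics on finite Zak}(iii). The paper states part (ii) more tersely, but the underlying argument is identical to yours.
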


\begin{proof}
	(i):   Let $f \in \mathcal{S}(\R)$. Then
	\begin{align*}
		Z_d(S_N P_N f)(m,n) &= \sum_{j=0}^{N-1} \Big( \sum_{\ell=-\infty}^\infty f(m/N - j + \ell N) \Big) \e^{2\pi \im \frac{jn}{N}} \\[2mm]
		&= \sum_{k=-\infty}^\infty f(m/N -k) \e^{2\pi \im \frac{n}{N}k} \\[2mm]
		&= Zf(m/N,n/N).
	\end{align*}
(ii): Observe that  part (i) holds for both $f$ and   $\mathcal{F}f$. With this, in combination with    parts (iii) of    Lemma \ref{ztprop...} and Lemma  \ref{lemma:basics on finite Zak}, the proof of part (ii) follows.
\end{proof}

\begin{remark} \label{hasta la vista}
\begin{itemize}
\item[(i)]	Although Proposition \ref{proposition:trump1} is   formulated for functions in the Schwartz class $\mathcal{S}(\R)$, it is readily checked that it   holds for all functions $f \in L^2(\R)$ which satisfy both $\sup_{t \in \R} \abs{t^2 f(t)} < \infty$ and $\sup_{\xi \in \R} \abs{\xi^2 \hat f(\xi)} < \infty$.
\item[(ii)]  In Section \ref{seksjon 6}, we obtain a version of Proposition \ref{proposition:trump1} which holds for all $f \in L^2(\R)$.
\end{itemize}
\end{remark}

We end this section with a lemma that relates the discrete and continuous derivatives.
\begin{lemma}\label{derder}
Let $f\in L^2(\R)$ be a function that satisfies the condition of Remark \ref{hasta la vista}(i), and denote $a=S_NP_Nf$. Then,
\[
\sum_{j=0}^{d-1}|\Delta a_j|  \leq  \int_{\R}|f'(x)|\dif x.
\]
\end{lemma}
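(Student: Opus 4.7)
If $\int_\R |f'(x)|\,dx = \infty$ the inequality is trivial, so we may assume $f' \in L^1(\R)$. The plan is a direct computation: express each discrete difference $\Delta a(j)$ as an integral of $f'$ via the fundamental theorem of calculus, apply the triangle inequality, and observe that the resulting intervals tile the real line exactly once when summed over $j$ and the periodization index $\ell$.

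First, under the standing hypothesis and the additional assumption $f' \in L^1$, the function $f$ is (after modification on a null set) absolutely continuous and vanishes at $\pm \infty$ by the decay assumption $\sup_t |t^2 f(t)| < \infty$. Writing out the periodization,
\[
	a(j) = S_N P_N f(j) = \sum_{\ell \in \Z} f\Bigl(\frac{j}{N} + \ell N\Bigr),
\]
where absolute convergence is guaranteed by the decay of $f$. I would then compute
\[
	\Delta a(j) = a(j+1) - a(j) = \sum_{\ell \in \Z} \Bigl[ f\Bigl(\frac{j+1}{N} + \ell N\Bigr) - f\Bigl(\frac{j}{N} + \ell N\Bigr) \Bigr] = \sum_{\ell \in \Z} \int_{\frac{j}{N} + \ell N}^{\frac{j+1}{N} + \ell N} f'(x)\,dx,
\]
using absolute continuity of $f$ in the last step.

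Next, applying the triangle inequality inside the sum and then summing over $j = 0,\ldots,d-1$ (with $d = N^2$) gives
\[
	\sum_{j=0}^{d-1} |\Delta a(j)| \leq \sum_{j=0}^{d-1} \sum_{\ell \in \Z} \int_{\frac{j}{N} + \ell N}^{\frac{j+1}{N} + \ell N} |f'(x)|\,dx.
\]
The key geometric observation is that, for fixed $\ell \in \Z$, the $d = N^2$ intervals $\bigl[\frac{j}{N}+\ell N,\, \frac{j+1}{N}+\ell N\bigr]$ with $j = 0,\ldots,N^2-1$ are disjoint (up to endpoints) and tile the interval $[\ell N, (\ell+1) N]$. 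Letting $\ell$ range over $\Z$ then tiles all of $\R$ exactly once, so the double sum of integrals collapses to $\int_\R |f'(x)|\,dx$, which gives the desired bound.

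The only mild subtlety is justifying that $f$ is absolutely continuous so that the fundamental theorem of calculus applies; this is where the reduction $f' \in L^1$ combined with the decay of $f$ at infinity is used. Everything else is interchange of countable sums (justified by positivity in the last step) and the elementary tiling observation, so I do not expect any real obstacle.
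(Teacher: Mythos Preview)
Your proof is correct and follows essentially the same route as the paper: write the periodization, apply the triangle inequality over the $\ell$-sum, bound each difference $f((j+1)/N+\ell N)-f(j/N+\ell N)$ by the integral of $|f'|$ over the corresponding interval, and observe that these intervals tile $\R$. You are slightly more explicit about the regularity issues (absolute continuity, the trivial case $f'\notin L^1$), but the argument is the same.
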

\begin{proof}
We have
\[
\begin{aligned}
\sum_{j=0}^{d-1}|\Delta  a_j|&  \leq  \sum_{j=0}^{d-1}\sum_{\ell= - \infty}^\infty \Big |f\Big(\frac{j+1}{N}+ \ell N\Big)-f\Big(\frac{j}{N}+ \ell N\Big)\Big |\\[2mm]
& \leq   \sum_{j=0}^{d-1}\sum_{\ell= - \infty}^\infty \int_{\frac{j}{N}+\ell N}^{\frac{j+1}{N}+\ell N}|f'(x)|\dif x=\int_{\R}|f'(x)|\dif x.
\end{aligned}
\]
\end{proof}

\section{Regularity of the Zak transforms} \label{xcom3}

Essentially, this paper is about the regularity of Zak transforms (or rather, their lack of such). In this section, we formulate a few lemmas in this regard.

\subsection{`Jumps' of quasi-periodic functions on $\Z_d^2$}

It is well known that the argument of a quasi-periodic function on $\R^2$ cannot be continuous (see, e.g., \cite[Lemma 8.4.2]{grochenig2001}). In  \cite[Lemma 1]{nitzan_olsen2013},  we show that such a function has to `jump' on all  rectangular lattices (see Remark \ref{schremark}, below). The latter lemma is finite dimensional in nature.  Below, we formulate it as such, as well as improve the constants used in the lemma.
To this end,  we use the notation
\begin{equation*}
	 \abs{h} > \delta \quad (\mathrm{mod } \; 1)
\end{equation*}
to denote that
 $$\inf_{n \in \Z} \abs{h - n} > \delta.$$

\begin{lemma}\label{lemma:jump 1}
Fix integers $K,L\geq 2$. Let $\gamma\in\C$ and let $H$ be a function on
$([0,K] \times [0,L]) \cap \Z^2$
that satisfies
\begin{align*}
	H(i,L)&=H(i,0) \:\:\:(\mathrm{mod } \, 1) &&   i \in [0,K]\cap \Z \quad \text{and} \\[2mm]
	H(K,j)&= H(0,j)+\gamma+\frac{j}{L} \:\:\:(\mathrm{mod } \, 1)&&    j \in [0,L]\cap \Z.
\end{align*}
 Then, there exist $(i,j)\in  ([0,K-1] \times [0,L-1]) \cap \Z^2$  such that
	\begin{equation} \label{inequality:jump 10}
		\Abs{  \Delta H(i,j)    }  \geq \frac{1}{4} \quad (\mathrm{mod } \, 1)
	\end{equation}
		or
	\begin{equation} \label{inequality:jump 20}
		\Abs{  \Gamma H(i,j)}  \geq \frac{1}{4}  \quad (\mathrm{mod } \, 1).
	\end{equation}
\end{lemma}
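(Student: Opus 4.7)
The plan is to argue by contradiction. Assume that for every $(i,j) \in [0,K-1] \times [0,L-1] \cap \Z^2$ both $|\Delta H(i,j)| < 1/4 \pmod 1$ and $|\Gamma H(i,j)| < 1/4 \pmod 1$, and aim for a clash with the nontrivial winding built into the horizontal quasi-periodicity.

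The first step is to pass from mod--$1$ data to real-valued lifts. For every controlled edge let $d_h(i,j), d_v(i,j) \in (-1/4, 1/4)$ be the unique real representatives modulo $1$. Around every unit cell whose four vertices lie in the controlled domain, the signed circulation
\[
d_h(i,j) + d_v(i+1,j) - d_h(i,j+1) - d_v(i,j)
\]
is an integer (it is $\equiv 0 \pmod 1$ since the corresponding sum in $H$ telescopes) of absolute value strictly less than $1$, hence it must vanish. Thus $(d_h, d_v)$ is a closed discrete $1$-form on the interior, and integration starting from $\tilde H(0,0) := H(0,0)$ produces a real-valued $\tilde H$ satisfying $\tilde H \equiv H \pmod 1$. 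The vertical quasi-periodicity allows one to extend the horizontal lifts to the top row by $d_h(i, L) := d_h(i, 0)$ (the two sides are congruent mod $1$ by vertical quasi-periodicity and have the same small representative), and the cocycle relation at each right-column cell then dictates a consistent lift $d_v(K, j)$.

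The contradiction is obtained by computing $\phi(L) - \phi(0)$ in two ways, where $\phi(j) := \tilde H(K, j) - \tilde H(0, j)$. Via the vertical condition: $N(i) := \tilde H(i, L) - \tilde H(i, 0) \in \Z$ has differences $N(i+1) - N(i) = d_h(i, L) - d_h(i, 0) = 0$, so $N$ is constant and $\phi(L) - \phi(0) = N(K) - N(0) = 0$. Via the horizontal condition: $\phi(j) \equiv \gamma + j/L \pmod 1$, so each increment $\phi(j+1) - \phi(j) = d_v(K, j) - d_v(0, j)$ is congruent to $1/L$ modulo $1$, and the smallness of the individual differences should force this increment to equal exactly $1/L$, giving $\phi(L) - \phi(0) = 1$, a contradiction with $0$. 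For $L = 2$ the class $1/2$ admits no strictly small representative at all, producing an immediate contradiction; for $L \geq 3$ the unique representative of $1/L$ of absolute value less than $1/2$ is $1/L$ itself, and the telescoping $\phi(L) - \phi(0) = L \cdot 1/L = 1$ finishes the argument.

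The main obstacle is making the second computation precise. Because the horizontal quasi-periodicity forces $\Gamma H(K, j) \equiv \Gamma H(0, j) + 1/L \pmod 1$, the cocycle-determined lift $d_v(K, j)$ can \emph{a priori} fall in a wider interval than the small lifts on the left column, so $\phi(j+1) - \phi(j)$ is only controlled in $(-1, 1)$ rather than in $(-1/2, 1/2)$. The delicate step is to sharpen this bound enough that only the representative $1/L$ of the class $1/L \pmod 1$ survives. Once this boundary bookkeeping is arranged, the lemma reduces to the discrete analog of the classical fact that a section of the quasi-periodic bundle determined by the transition function $\gamma + j/L$ must have nontrivial winding, and so cannot have uniformly small argument jumps.
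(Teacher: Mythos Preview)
Your approach is essentially the paper's: argue by contradiction, pass to a real-valued lift with small increments on every interior edge, and compare two computations of the corner defect $\tilde H(K,L)-\tilde H(K,0)-\tilde H(0,L)+\tilde H(0,0)$. The paper lifts row-then-column rather than integrating a closed discrete $1$-form, but that is cosmetic; your $N(i)$ is the paper's $M_i$ and your $\phi(j)$ encodes the paper's $N_j$.

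The gap you flag in the second computation, however, is not mere bookkeeping---it cannot be closed at the threshold $1/4$. Your bound $\phi(j+1)-\phi(j)=d_v(K,j)-d_v(0,j)\in(-1,1)$ admits both representatives $1/L$ and $1/L-1$ of the class $1/L\pmod 1$, and nothing in the hypotheses rules out the latter. In fact the lemma as stated fails for $K=L=2$: take $\gamma=1/6$, set $H(0,0)=H(1,0)=H(0,1)=0$ and $H(1,1)=-1/6$, and let the remaining five values be dictated by the two boundary relations. A direct check gives
\[
\Delta H:\ 0,\ \tfrac16,\ -\tfrac16,\ \tfrac56,\qquad \Gamma H:\ 0,\ -\tfrac16,\ 0,\ \tfrac16
\]
on $\{0,1\}^2$, each with mod-$1$ distance $0$ or $1/6<1/4$, so the conclusion is violated. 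The paper's own proof slips at exactly the same spot: it asserts that the branch can be chosen with $|\Gamma h_{i,j}|<1/4$ for \emph{every} column $i\in[0,K]$, but for $i=K$ this would require $|\Gamma H(K,j)|<1/4\pmod 1$, which is not part of the negated hypothesis (only columns $0,\dots,K-1$ are controlled) and is violated in the example above since $\Gamma H(2,0)\equiv 1/2\pmod 1$. So the obstacle you identified is a genuine defect in the statement at $K=L=2$, not a hole in your write-up.
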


\begin{remark} \label{schremark}
	From time to time, we refer to functions satisfying   conditions such as \eqref{inequality:jump 10} or \eqref{inequality:jump 20} as having `jumps'.  This notion of `jumps' is imprecise and merely meant to be descriptive, and will depend on the context of the given situation.
\end{remark}
\begin{remark}
	Note that for $K=L=N$, the argument of an $N$-quasi-periodic up to a constant function (see  (\ref{cond:qpuptoconstant}))
satisfies the conditions of Lemma \ref{lemma:jump 1}.
\end{remark}

\begin{proof}[Proof of Lemma \ref{lemma:jump 1}]
	 To obtain a contradiction, we assume that neither \eqref{inequality:jump 10} nor (\ref{inequality:jump 20}) hold. For the sake of convenience, we choose a specific branch of the function $H$   as follows: First, write $h_{i,j}=H(i,j)$. We choose the row $h_{i,0}$ so that $\abs{\Delta h_{i,0}}<1/4$ for each $i \in[0,K-1] \cap \Z$.
		Next, for each fixed column $i \in  [0,K] \cap \Z$, we choose $h_{i,j}$ so that $\abs{\Gamma h_{i,j}} < 1/4$ for each $j \in [0,L-1] \cap \Z$. With this, $h_{i,j}$ is defined for all $(i,j)\in  ([0,K] \times [0,L]) \cap \Z^2.$

	The conditions on $H$ imply that  for all  integers $0\leq i\leq K$ and $1\leq j\leq L$, there exist integers $M_i$ and $N_j$ so that
	\begin{equation} \label{cond:qp3}
		h_{K,j} = h_{0,j} +  \gamma +\frac{j}{L} + N_j \qquad \text{and} \qquad h_{i,L} = h_{i,0} + M_i.
	\end{equation}	
	In particular,  plugging  $j=0, j=L$ and $i=0, i=K$ into these equations, it is easy to check that
	\begin{equation} \label{eq:winding}
		\begin{aligned}
		M_K-M_0 = 1 + (N_L -  N_0).
		\end{aligned}
	\end{equation}
	We will now show that our choice of branch implies on the one hand, that  $N_L=N_0$, and, on the other hand,  that $M_K=M_0$. This contradicts (\ref{eq:winding}) and would therefore complete the proof.
	
	To show that $N_L=N_0$, we note that due to the relations \eqref{cond:qp3}, we have
	\[
		 N_{j+1} - N_j = \Gamma h_{K,j} - \Gamma h_{0,j} - 1/L,
	\]
	which, by the triangle inequality, together with the conditions $L\geq 2$ and $\abs{\Gamma h_{i,j}} < 1/4$, imply that $\abs{N_{j+1} - N_j} <  1$.
	Since all the $N_j$ are integers, this implies that all $N_j$ are identical and, in particular, that $N_L=N_0$.

On the other hand, to show that $M_K=M_{0}$, let $n_{i,j}\in\Z$ and $|\alpha_{i,j}|<\frac{1}{4}$ be such that
\[
h_{i+1,j}-h_{i,j}=n_{i,j}+\alpha_{i,j}.
\]
Such $n_{i,j}$ and $\alpha_{i,j}$ exist due to our assumption that \eqref{inequality:jump 10} does not hold. Then,
\[
n_{i,j+1}-n_{i,j}=\Gamma h_{i+1,j}-\Gamma h_{i,j} -\alpha_{i,j+1}+\alpha_{i,j}.
\]
As above, the triangle inequality combined with our assumptions  imply that $n_{i,j}=n_{i,j+1}$ for every $i,j$. In particular, we have $n_{i,L}=n_{i,0}$ for every fixed $i \in [0,K] \cap \Z$. Now, since
\[
M_{i+1}-M_i=n_{i,L}+\alpha_{i,L}-n_{i,0}-\alpha_{i,0}=\alpha_{i,L}-\alpha_{i,0},
\]
and since all the $M_i$ are integers, we   conclude that $M_i=M_{i+1}$ for every $i$. This completes the proof.
\end{proof}

\subsection{`Jumps'   of quasi-periodic functions on   subsets of $\Z_d^2$}

In this subsection, we extend Lemma \ref{lemma:jump 1} to show that it  also holds   when the function is restricted to certain subsets of $\Z_d^2$, which we want to treat as if they were sublattices of $\Z_d^2$, even if they, strictly speaking, are not.
To this end, for integers $K,L \in [2,N]$, we  define the functions
\begin{equation} \label{sigmaomega}
		\sigma_s = \Big[ \frac{s N}{K} \Big], \quad s \in [0,K] \cap \Z,\qquad \omega_t = \Big[ \frac{t N}{L} \Big], \quad t \in [0,L] \cap \Z,
\end{equation}
where $[a]$ denotes the integer part of $a$. Note that $\sigma_K=\omega_L=N$.
We can now state the following lemma.

\begin{lemma} \label{douche}
Fix positive integers $N \geq 5$, $K,L\in [2,N]$, and denote $d=N^2$. Let  $W$ be a function defined over $\Z_d^2$ that is  $N$-quasi-periodic up to a constant (see  (\ref{cond:qpuptoconstant})). Denote by $H$ any branch of the argument of $W$, so that $W= \abs{W} \e^{2\pi \im H}$.  Then, there exist $(s,t) \in  ([0,K-1]\times[0,L-1]) \cap \Z^2$  so that either	
	\begin{equation} \label{inequality:jump 1}
		\abs{ \Delta_{(s)} H(\sigma_s,\omega_t)  }  \geq \frac{1}{4} - \frac{1}{N} \quad (\mathrm{mod } \, 1)
	\end{equation}
		or
	\begin{equation} \label{inequality:jump 2}
		\abs{\Gamma_{(t)} H(\sigma_s,\omega_t)  }  \geq \frac{1}{4} - \frac{1}{N} \quad (\mathrm{mod } \, 1).
	\end{equation}
\end{lemma}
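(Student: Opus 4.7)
The approach is to pull the statement back to Lemma~\ref{lemma:jump 1} by restricting a branch of $H$ to the subgrid $\{(\sigma_s,\omega_t): (s,t)\in ([0,K]\times[0,L])\cap\Z^2\}$, suitably corrected. The naive restriction $H(\sigma_s,\omega_t)$ just fails to satisfy the hypotheses of Lemma~\ref{lemma:jump 1}, because the $N$-quasi-periodicity up to a constant of $W$ forces the wrap-around formula in $s$ to involve $\omega_t/N$ rather than $t/L$. However, $|\omega_t/N - t/L| < 1/N$ by the definition~\eqref{sigmaomega} of $\omega_t$, and this discrepancy is precisely what accounts for the loss of $1/N$ from the threshold $1/4$ in the conclusion.

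First I would choose a branch $H$ of the argument of $W$ (so that $W = |W|e^{2\pi i H}$) so that, modulo $1$,
\[
H(m+N, n) \equiv H(m, n) + \gamma + \tfrac{n}{N}, \qquad H(m, n+N)\equiv H(m,n),
\]
where $\gamma\in\R$ is chosen with $e^{2\pi i \gamma}=\eta$. Then I would introduce the corrected function
\[
\tilde H(s,t) := H(\sigma_s,\omega_t) + \frac{s}{K}\Bigl(\frac{t}{L} - \frac{\omega_t}{N}\Bigr), \qquad (s,t)\in([0,K]\times[0,L])\cap\Z^2.
\]
Using $\sigma_0=\omega_0=0$ and $\sigma_K=\omega_L=N$, a direct check yields
\[
\tilde H(K,t)-\tilde H(0,t)\equiv \gamma+\tfrac{t}{L} \pmod 1, \qquad \tilde H(s,L)-\tilde H(s,0)\equiv 0 \pmod 1,
\]
so $\tilde H$ meets the hypotheses of Lemma~\ref{lemma:jump 1}. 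That lemma then produces $(s,t)\in([0,K-1]\times[0,L-1])\cap\Z^2$ at which $|\Delta_{(s)}\tilde H(s,t)|\geq 1/4$ or $|\Gamma_{(t)}\tilde H(s,t)|\geq 1/4$, both modulo $1$.

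Finally, I would transfer these bounds to the uncorrected $H$ via the exact identities
\[
\Delta_{(s)}\tilde H(s,t)-\Delta_{(s)}H(\sigma_s,\omega_t) = \tfrac{1}{K}\bigl(\tfrac{t}{L}-\tfrac{\omega_t}{N}\bigr), \quad \Gamma_{(t)}\tilde H(s,t)-\Gamma_{(t)}H(\sigma_s,\omega_t) = \tfrac{s}{K}\bigl(\tfrac{1}{L}-\tfrac{\omega_{t+1}-\omega_t}{N}\bigr),
\]
each of absolute value less than $1/N$ (using $|t/L-\omega_t/N|<1/N$, $|(\omega_{t+1}-\omega_t)/N - 1/L|<1/N$, and $s/K\leq 1$). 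A triangle inequality modulo $1$ then yields \eqref{inequality:jump 1} or \eqref{inequality:jump 2}. The main subtlety is designing the correction term so that $\tilde H$ fits Lemma~\ref{lemma:jump 1} exactly while perturbing the discrete derivatives by at most $1/N$: the bilinear form $(s/K)(t/L-\omega_t/N)$, which vanishes whenever $s=0$ or $\omega_t = tN/L$, achieves both goals simultaneously, with the hypothesis $N\geq 5$ serving only to keep the resulting threshold $1/4-1/N$ positive.
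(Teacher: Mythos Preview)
Your proof is correct and follows the same overall strategy as the paper: reduce to Lemma~\ref{lemma:jump 1} by correcting the restriction $H(\sigma_s,\omega_t)$ so that the wrap-around in $s$ produces the required term $t/L$ rather than $\omega_t/N$, and then absorb the correction (of size $<1/N$) into the threshold. The only difference is in the choice of correction: the paper modifies $H$ solely on the boundary column $s=K$, setting $\Phi(K,t)=h_{0,t}+\gamma+t/L$ and leaving $\Phi=h$ elsewhere, so the $1/N$ loss arises only when the jump from Lemma~\ref{lemma:jump 1} lands at $s=K-1$; your bilinear correction $(s/K)(t/L-\omega_t/N)$ spreads the perturbation across the whole grid, incurring the (still $<1/N$) error at every point. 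Both implementations are clean and yield the same conclusion.
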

\begin{remark}
	Notice that if $K|N$ and $L|N$ then a stronger result follows immediately from  Lemma \ref{lemma:jump 1}. Indeed,  under these conditions, we can apply the lemma directly to the function $\widetilde{H}(s,t) = H(sN/K,tN/L)$ obtaining a jump of size at least $1/4$ instead of $1/4-1/N$.
\end{remark}

\begin{proof}
	Suppose that (\ref{cond:qpuptoconstant}) holds for $W$ with the constant   $\eta =e^{2\pi i \gamma}$. We start by modifying the argument $H(\sigma_{s},  \omega_{t})$ of $W$ to obtain a function that satisfies the conditions of Lemma \ref{lemma:jump 1}.  To this end, for $(s,t)\in\Z^2$, set
$h_{s,t} =    H(\sigma_{s},  \omega_{t})$, and define a function  $\Phi(s,t)$ on $([0,K]\times[0,L]) \cap \Z^2$ as follows:
\begin{equation*}
	  \Phi(s,t) =  \left\{
	\begin{aligned}
		h_{s,t} & \qquad \text{if } (s,t)\in ([0,K-1]\times[0,L]) \cap \Z^2, \\[2mm]
		h_{0,t} +  \gamma + \frac{t}{L}    & \qquad \text{if } s = K.
	\end{aligned} \right.
\end{equation*}
We note that since $\omega_L=N$, and $H$ is the argument of a function that is $N$-periodic in the second variable, we have
\[h_{s,L}=H(\sigma_s,N)=H(\sigma_s,0)=h_{s,0}\qquad (\textrm{mod } 1).\]
Therefore, $\Phi(s,L)=\Phi(s,0)$ (mod $1$) for every $s\in [0,K] \cap \Z$. It follows that $\Phi$ satisfies the conditions of Lemma \ref{lemma:jump 1} on $([0,K]\times[0,L]) \cap \Z^2$,
and, as a consequence, there exists a point $(s,t) \in ([0,K-1]\times[0,L-1]) \cap \Z^2$ so that either
\begin{align*}
	\abs{\Delta \Phi(s,t)} \geq 1/4 \quad \text{or} \quad 	\abs{\Gamma \Phi(s,t)} \geq 1/4 \quad\textrm{(mod 1)}
\end{align*}
hold.
The jumps for $h_{s,t}$ now follow from the jumps of $\Phi$.  Indeed, if the jump is in the vertical direction, or if it is in the horizontal direction at a point $(s,t)$ with $s \leq K-2$, then this is immediate from the definition of $\Phi$. If the jump is in the horizontal direction, at a point $(s,t)$ with $s = K-1$, then we note that,   since $\sigma_K = N$, the $N$-quasi-periodicity  up to a constant of $W$ implies that 
\begin{align*}
	h_{K,t}
	= H(N,  \omega_t )
	&= H(0 , \omega_t)  +  \gamma +  \frac{\omega_t}{N} \\[2mm]
	&= h_{0,t}  +  \gamma +   \frac{\omega_t}{N} \\[2mm]
	& = \Phi(K,t) + \Big( \frac{\omega_t}{N}  - \frac{t}{L} \Big).
\end{align*}
Since, by definition $\Phi(K-1,t)=h_{K-1,t}$, it follows that
\begin{align*}
|h_{K,t}-h_{K-1,t}|&\geq |\Phi(K,t)- \Phi(K-1,t)|-\Big|\frac{\omega_t}{N}-\frac{t}{L}\Big| \\[2mm]
&\geq \frac{1}{4}-\Big| \frac{\omega_t}{N}-\frac{t}{L}\Big|.
\end{align*}
The lemma now follows from the fact that,
\begin{align*}
	\Abs{\frac{\omega_t}{N}-\frac{t}{L}} &= \Abs{\frac{[tN/L]}{N}-\frac{t}{L} } \\[2mm] &\leq \Abs{\frac{tN/L}{N}-\frac{t}{L} } + \frac{1}{N} = \frac{1}{N}.
\end{align*}
\end{proof}

We  obtain the following corollary of Lemma \ref{douche}.

\begin{corollary}  \label{lem:infidel1}
	Fix an integer $N_0 \geq 5$ and a constant $A>0$. Let $$\delta=2\sqrt{A}\sin\Big( \pi \Big(\frac{1}{4}-\frac{1}{N_0}\Big)\Big).$$ For any integers
	$N \geq N_0$ and $K,L\in [2,N]$, the following holds (with  $d=N^2$).
	If $W$ is an  $N$-quasi periodic function satisfying $A \leq \abs{W}^2$ over the lattice $\Z^2_d$,
	then, for  every  $(u, v) \in \Z_d^2$, there exists at least one point $(s,t)\in ([0,K-1] \times [0,L-1])\cap \Z^2$ such that
	\begin{align}
		&\abs{\Delta_{(s)} W(u+\sigma_{s}  , v+\omega_t)} \geq \delta, \quad \text{or}  \label{baloney10} \\[2mm]
		&\abs{\Gamma_{(t)} W(u+\sigma_s, v+\omega_{t}) } \geq \delta, \label{baloney11}
	\end{align}
where  $\sigma_s, \omega_t$ are   defined in (\ref{sigmaomega}).
	\end{corollary}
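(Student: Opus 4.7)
The plan is to reduce the corollary directly to Lemma \ref{douche} by exploiting the fact that translations preserve the relevant structural hypothesis. Concretely, I would fix $(u,v) \in \Z_d^2$ and consider the translated function $\widetilde{W}(m,n) := W(u+m, v+n)$. Since, as noted in the remark following (\ref{cond:qpuptoconstant}), any translation of an $N$-quasi-periodic function is $N$-quasi-periodic up to a constant, and since translation preserves the modulus, $\widetilde{W}$ satisfies the hypotheses of Lemma \ref{douche} (with the same lower bound $A$ on $|\widetilde{W}|^2$).

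The next step is to write $\widetilde{W} = |\widetilde{W}|\, e^{2\pi i H}$ for some branch $H$ of the argument, and apply Lemma \ref{douche}. This produces a point $(s,t) \in ([0,K-1] \times [0,L-1]) \cap \Z^2$ at which one of the inequalities (\ref{inequality:jump 1}) or (\ref{inequality:jump 2}) holds, giving a jump in the argument of size at least $1/4 - 1/N \geq 1/4 - 1/N_0$ modulo $1$.

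The final step is the mechanical conversion of an argument jump into a jump of the function value itself. Writing $z_1 = r_1 e^{i\alpha_1}$ and $z_2 = r_2 e^{i\alpha_2}$ with $r_1, r_2 \geq \sqrt{A}$, one computes
\begin{equation*}
|z_1 - z_2|^2 = (r_1 - r_2)^2 + 4 r_1 r_2 \sin^2\Bigl(\frac{\alpha_1 - \alpha_2}{2}\Bigr).
\end{equation*}
If $(\alpha_1 - \alpha_2)/(2\pi)$ differs from an integer by at least $1/4 - 1/N_0$, then the sine factor is at least $\sin(\pi(1/4-1/N_0))$ in absolute value, yielding $|z_1 - z_2| \geq 2\sqrt{A}\sin(\pi(1/4 - 1/N_0)) = \delta$. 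Applying this at the point $(s,t)$ produced by Lemma \ref{douche} and then unwinding the translation (so that $\widetilde{W}(\sigma_s,\omega_t) = W(u+\sigma_s, v+\omega_t)$) gives exactly (\ref{baloney10}) or (\ref{baloney11}).

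I do not foresee any substantial obstacle here, as the corollary is essentially Lemma \ref{douche} packaged in two ways: first translated to arbitrary basepoints $(u,v)$, and second phrased in terms of the function $W$ rather than its argument. The one small point requiring care is the passage from the additive jump in $H$ (mod $1$) to a multiplicative jump of $\widetilde{W}$, which is handled by the elementary trigonometric identity above and accounts precisely for the factor $2\sqrt{A}\sin(\pi(1/4 - 1/N_0))$ appearing in the definition of $\delta$.
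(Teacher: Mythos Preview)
Your proposal is correct and follows essentially the same approach as the paper: translate $W$ so that Lemma \ref{douche} applies (using that translations of $N$-quasi-periodic functions are $N$-quasi-periodic up to a constant), extract the argument jump of size at least $1/4-1/N\geq 1/4-1/N_0$, and then convert this into a jump of $W$ itself via the elementary identity $|z_1-z_2|^2=(r_1-r_2)^2+4r_1r_2\sin^2\bigl((\alpha_1-\alpha_2)/2\bigr)$ together with the lower bound $|W|\geq\sqrt{A}$. The paper's proof is more terse (it simply says ``the conclusion now follows from basic trigonometry''), but the content is identical.
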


\begin{proof}
First, we point out that   we define the argument $\arg(z)$ of a complex number $z$ so that $z=|z|e^{2\pi i\textrm{arg}(z)}$.
Since any translation of a quasi-periodic function is quasi-periodic up to a constant, as defined in (\ref{cond:qpuptoconstant}), it follows from Lemma \ref{douche} that on an $N\times N$  square the argument of $W$ jumps by  more than ${1}/{4} - 1/N$ in at least one of the inequalities (\ref{baloney10}) or (\ref{baloney11}). As the modulus of $W$ is bounded from below by $\sqrt{A}$, the conclusion now follows from basic trigonometry.
\end{proof}

\section{A proof for Theorem \ref{finite blt}} \label{xcom4}

Here we give a proof for Theorem \ref{finite blt} in the general Riesz basis case referred to in Remark \ref{finite blt riesz}.   In the  first subsection below, we reformulate the theorem in terms of the Zak transform. Using this, we proceed to prove the bound from below, and, finally, we prove the bound from above.

\subsection{Measures of  smoothness for finite sequences} \label{secsevc}

Fix $N\in\N$ and set $d=N^2$. For $b \in \ell^d_2$, denote 
\[
\alpha(b,N):=N\sum_{j=0}^{d-1} \abs{\Delta b(j)}^2  +  N\sum_{k=0}^{d-1} \abs{\Delta \mathcal{F}_d{b}(k)}^2,
\]
and
\[
\beta(b,N):=\sum_{m,n=0}^{N-1} \abs{\Delta Z_d(b)\,(m,n)}^2 + \sum_{m,n=0}^{N-1} \abs{\Gamma Z_d (b)\,(m,n)}^2.
\]
Note that with these notations the quantity $\alpha(N)$ defined in the introduction satisfies
\begin{equation*}
	\alpha(N)  = \inf \alpha(b,N)
\end{equation*}
where the infimum is taken over  $b \in \ell^d_2$  for which   $G(b)$ is an orthonormal basis.

\begin{proposition} \label{prop:alpha and beta} Let $b\in \ell^d_2$ be such that $|Z_d(b)(m,n)|^2\leq B$ for all $(m,n)\in\Z_d^2$. Then, for all integers $N\geq 2$, we have
	\begin{equation}\label{gaba}
		 \frac{1}{2} \beta(b,N) - 8\pi^2 B   \leq  \alpha(b,N) \leq  2 \beta(b,N) +  8\pi^2 B
	\end{equation}
\end{proposition}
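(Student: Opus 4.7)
My plan is to view both $\alpha(b,N)$ and $\beta(b,N)$ as $L^2$--norms of discrete derivatives and to use the Zak transform to convert between them.

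\textbf{Step 1: Horizontal derivative of the Zak transform.} I would begin with the observation that the Zak transform commutes with the discrete difference: from Definition \ref{definition:finite zak} one has $\Delta Z_d(b)(m,n) = Z_d(\Delta b)(m,n)$, since the shift in the first argument simply shifts each $b(m-Nj)$. Combined with the unitarity of $Z_d$ from $\ell_2^d$ onto $\ell_2([0,N-1]^2)$ (Lemma \ref{lemma:basics on finite Zak}(ii)), this gives the \emph{exact identity}
\[
\sum_{m,n=0}^{N-1}\bigl|\Delta Z_d(b)(m,n)\bigr|^2 \;=\; \frac{d}{N}\sum_{j=0}^{d-1}|\Delta b(j)|^2 \;=\; N\sum_{j=0}^{d-1}|\Delta b(j)|^2.
\]

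\textbf{Step 2: Vertical derivative via the Fourier--Zak relation.} The analogous identity for $\Gamma Z_d(b)$ and $\Delta \mathcal{F}_d b$ is \emph{not} exact, and this is where the loss of the constant $8\pi^2 B$ enters. Inverting the identity in Lemma \ref{lemma:basics on finite Zak}(iii), I would write $Z_d(b)(m,n)=\mathrm{e}^{2\pi \im mn/d}Z_d(\mathcal{F}_d b)(n,-m)$ and then compute
\[
\Gamma Z_d(b)(m,n) \;=\; \mathrm{e}^{2\pi \im mn/d}\Bigl[\bigl(\mathrm{e}^{2\pi \im m/d}-1\bigr)Z_d(\mathcal{F}_d b)(n+1,-m) + \Delta_{(n)} Z_d(\mathcal{F}_d b)(n,-m)\Bigr].
\]
So $\Gamma Z_d(b)$ equals (up to a unimodular phase) a horizontal difference of $Z_d(\mathcal{F}_d b)$ plus an error controlled by $|\mathrm{e}^{2\pi \im m/d}-1|\le 2\pi m/d\le 2\pi/N$ for $m\in[0,N-1]$.

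\textbf{Step 3: Bounding the error.} Using the hypothesis $|Z_d(b)|^2\le B$ together with Lemma \ref{lemma:basics on finite Zak}(iii), one has $|Z_d(\mathcal{F}_d b)|^2\le B$ as well. Therefore
\[
\sum_{m,n=0}^{N-1}\bigl|\mathrm{e}^{2\pi \im m/d}-1\bigr|^2 \bigl|Z_d(\mathcal{F}_d b)(n+1,-m)\bigr|^2 \;\le\; \frac{4\pi^2}{N^2}\cdot dB \;=\; 4\pi^2 B.
\]
Applying $(x+y)^2\le 2x^2+2y^2$ in both directions to the identity from Step 2, and using that $\Delta_{(n)} Z_d(\mathcal{F}_d b)$ summed over the indices $(n,-m)\in[0,N-1]\times\{-(N-1),\ldots,0\}$ gives, by $N$-periodicity of $|Z_d(\mathcal{F}_d b)|$ in the second variable and Step 1 applied to $\mathcal{F}_d b$, exactly $N\sum_k |\Delta \mathcal{F}_d b(k)|^2$, I obtain the two inequalities
\[
\sum_{m,n}|\Gamma Z_d(b)|^2 \le 2N\sum_k|\Delta \mathcal{F}_d b(k)|^2 + 8\pi^2 B, \qquad N\sum_k|\Delta \mathcal{F}_d b(k)|^2 \le 2\sum_{m,n}|\Gamma Z_d(b)|^2 + 8\pi^2 B.
\]

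\textbf{Step 4: Combining.} Adding the identity from Step 1 to each of the two estimates in Step 3, and using $a+2b\le 2(a+b)$, yields $\alpha(b,N)\le 2\beta(b,N)+8\pi^2 B$ and $\beta(b,N)\le 2\alpha(b,N)+8\pi^2 B$, which rearranges to the stated double inequality.

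The only subtle step is Step 2 (or rather, carefully tracking the change-of-variables $(n,-m)\leftrightarrow(m,n)$ when passing between summing over $[0,N-1]^2$ for $Z_d(b)$ and for $Z_d(\mathcal{F}_d b)$); this is where one needs the $N$-periodicity of $|Z_d(\mathcal{F}_d b)|$, which follows from $N$-quasi-periodicity. Everything else is a direct unitarity computation.
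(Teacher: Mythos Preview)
Your proof is correct and follows essentially the same approach as the paper. The paper also uses the exact identity $N\sum_j|\Delta b(j)|^2=\sum_{m,n}|\Delta Z_d(b)(m,n)|^2$ from unitarity for the first term, and the Fourier--Zak relation \eqref{Zak and Fourier} together with $|x+y|^2\le 2|x|^2+2|y|^2$ and the bound $|e^{2\pi\im m/d}-1|\le 2\pi m/d$ for the second; the only cosmetic difference is that the paper expresses $\Delta Z_d(\mathcal{F}_d b)(n,-m)$ in terms of $\Gamma Z_d(b)(m,n)$ (so the error term sits on $Z_d(b)$ directly and the hypothesis $|Z_d(b)|^2\le B$ applies without the extra remark that $|Z_d(\mathcal{F}_d b)|=|Z_d(b)|$), whereas you invert the relation and compute $\Gamma Z_d(b)$ in terms of $\Delta Z_d(\mathcal{F}_d b)$---the same identity read in the opposite direction.
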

\begin{proof}
We will only prove the right-hand side inequality in (\ref{gaba}) since the left-hand side inequality is proved in the same way.

As the  finite Zak transform commutes with the difference operation $\Delta$, that is
$Z_d(\Delta b)= \Delta Z_d(b)$, and the finite Zak transform is unitary from $\ell_2^d$ to $\ell_2([0,N-1]^2)$, we find that
\[
\|\Delta b\|^2_{\ell_2^d}=\| Z_d(\Delta b)\|^2_{\ell_2([0,N-1]^2)}=\|\Delta Z_d(b)\|^2_{\ell_2([0,N-1]^2)}.
\]
Multiplying the above equation by $d=N^2$, we get
\begin{equation} \label{haba}
N\sum_{j=0}^{d-1} \abs{\Delta b(j)}^2=\sum_{m,n=0}^{N-1} \abs{\Delta Z_d( b)\,(m,n)}^2.
\end{equation}
Similarly,
\begin{equation} \label{laba}
\|\Delta \mathcal{F}_d  b\|^2_{\ell_2^d}=\|\Delta Z_d(\mathcal{F}_d  b)\|^2_{\ell_2([0,N-1]^2)}.
\end{equation}
To relate the expression on the right-hand side to $\Gamma Z_d ( b)$,   we use the relation between the finite Fourier transform and the Zak transform (\ref{Zak and Fourier}) to compute
\begin{align*}
	\abs{\Delta Z_d  (\mathcal{F}_d b) \,(n,-m)} ^2
&= \abs{ \e^{-2\pi \im m(n+1)/d}Z_d ( b)\,(m, n+1) - \e^{-2\pi \im mn/d} Z_d (b)  (m,n)} ^2\\[2mm]
	&\leq  2\abs{ \Gamma Z_d ( b)\, (m,n)}^2 +  2 \abs{(\e^{-2\pi \im m/d}  - 1 )Z_d (b)\, (m,n)}^2 \\[2mm]
	&\leq 2\abs{ \Gamma Z_d ( b)\, (m,n)}^{2} +8\pi^2\frac{m^2}{d^2}B.
\end{align*}
Combining this estimate with (\ref{laba}), and recalling that the Zak transform is $N$-periodic in the second variable, we find that
\begin{align*}
\|\Delta \mathcal{F}_d  b\|^2_{\ell_2^d}&=\frac{1}{N^2}\sum_{m,n=0}^{N-1}|\Delta Z_d(\mathcal{F}_d b)(n,-m)|^2\\[2mm]
&\leq 2\|\Gamma  Z_d( b)\|^2_{\ell_2([0,N-1]^2)}+8\pi^2\frac{1}{N^2}B,
\end{align*}
where, in the last estimate, we used the facts that $m\leq N$ and $d=N^2$.
Multiplying this inequality  by $N^2$, and combining it with (\ref{haba}), the right-hand inequality of  (\ref{gaba})   follows.
\end{proof}
Next, for $A,B>0$, we put
\[
\alpha_{A,B}(N)=\inf\{\alpha(b,N)\}\qquad\textrm{ and }\qquad\beta_{A,B}(N)=\inf\{\beta(b,N)\},
\]
where the infimums are taken over all   $b\in \ell^d_2$  for which the system $G_d(b)$ is a basis with Riesz basis bounds  at least $A$ and at most $B$.

	Proposition \ref{prop:alpha and beta} now implies that with the notations above, the following inequality holds for every  $N \in \N$:
	\begin{equation}\label{alphabeta}
		 \frac{1}{2}\beta_{A,B}(N) - 8\pi^2 B   \leq \alpha_{A,B}(N) \leq  2\beta_{A,B}(N) +  8\pi^2 B.
	\end{equation}

In light of  this inequality, Theorem \ref{finite blt} (as well the version discussed in Remark \ref{finite blt riesz})  can be reformulated as follows.

\begin{theorem}\label{finite blt with betta}
There exist constants $c,C>0$ so that, for all integers $N \geq 2$, we have
\[
c \log N\leq \beta_{A,B}(N)\leq C \log N.
\]
\end{theorem}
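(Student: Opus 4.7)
By Proposition \ref{gaboriesz}(ii), $\beta_{A,B}(N)$ equals the infimum of $\beta(b,N)$ over $b \in \ell_2^d$ whose Zak transform $W := Z_d(b)$ is $N$-quasi-periodic on $\Z_d^2$ with $A \leq |W|^2 \leq B$ on $[0,N-1]^2 \cap \Z^2$. The task thus reduces to proving $c \log N \leq \inf_W \beta(b,N) \leq C \log N$, where the infimum is over such $W$.

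For the lower bound, the plan is to apply Corollary \ref{lem:infidel1} at each dyadic scale $K = L = 2^k$ for $k = 1, 2, \ldots, \lfloor \log_2 N \rfloor$. At scale $k$ and for each starting point $(u,v) \in \Z_d^2$, the corollary produces indices $(s,t) \in [0, 2^k - 1]^2$ for which $W$ jumps by at least $\delta = 2\sqrt{A}\sin(\pi(1/4 - 1/N))$ across an edge of length $\ell_k \approx N/2^k$. Cauchy--Schwarz converts each such jump into a local estimate $\sum_{j} |\Delta W(u + \sigma_s + j, v + \omega_t)|^2 \geq \delta^2/\ell_k$ (or the analogous bound for $|\Gamma W|^2$), where the sum runs along the jump edge. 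Varying $(u,v)$ over $\Z_d^2$ at scale $k$ yields at least $\gtrsim N^2/\ell_k^2$ distinct jump edges. The crucial combinatorial step is to organize the starting points $(u,v)$ and the spatial locations of the jumps so that, across the different scales $k$, the $\delta^2/\ell_k$ contributions land in essentially disjoint regions of the fundamental domain (for example, in dyadic annuli of width $\ell_k$ around a discrete ``vortex'' of $W$). Summing the $\sim \log N$ essentially disjoint contributions then yields $\beta(b,N) \gtrsim \delta^2 \log N$.

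For the upper bound, the plan is to exhibit an explicit orthonormal-basis generator $b_0 \in \ell_2^d$, equivalently an $N$-quasi-periodic unimodular $W_0 : \Z_d^2 \to \C$, for which $\beta(b_0, N) \leq C \log N$. A natural candidate is a discretization of a smooth unimodular quasi-periodic function whose phase has a single winding singularity of degree $1$ on the fundamental domain; for such $W_0$, $|\Delta W_0|$ and $|\Gamma W_0|$ are of order $1/r$ at distance $r$ from the vortex cell. Since a dyadic annulus of radius and width $\sim 2^k$ contains $\sim 4^k$ cells each contributing $\sim 4^{-k}$, each scale contributes $\sim 1$ to $\beta(b_0, N)$, and summing over the $\sim \log_2 N$ dyadic scales gives the $O(\log N)$ upper bound.

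The main obstacle is the combinatorial step of the lower bound: a one-scale application of the corollary gives only $\beta(b,N) \gtrsim \delta^2$, a constant, and the logarithmic factor must be extracted via a delicate multi-scale analysis that prevents the scale-$k$ contributions from overlapping across different values of $k$.
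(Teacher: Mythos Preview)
Your upper bound sketch is correct and matches the paper: both use a unimodular quasi-periodic function with a single degree-one phase singularity (the paper takes an explicit construction from \cite{benedetto_czaja_gadzinski_powell2003}), and the $1/r$ gradient heuristic yields the $O(\log N)$ bound after summing over dyadic scales.

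For the lower bound, you have correctly identified the strategy (apply Corollary~\ref{lem:infidel1} at each dyadic scale and convert jumps via Cauchy--Schwarz) and the obstacle (preventing overlap across scales), but you have not resolved the obstacle, and your proposed resolution cannot work as stated. The suggestion of placing the contributions into ``dyadic annuli around a discrete vortex of $W$'' presupposes a distinguished singular point, which an arbitrary $N$-quasi-periodic $W$ with $A \leq |W|^2 \leq B$ need not have; the jump locations produced by the corollary are not a priori localized near any particular cell, so there is no annular structure to exploit.

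The paper's disjointness mechanism is purely combinatorial and makes no reference to the geometry of $W$. At scale $j$ one takes $K_j = L_j = 2^{J-j}$ (step size $\approx 2^j$, where $2^J \leq N < 2^{J+1}$), so there are $2^j$ horizontal offsets $u \in [0,2^j-1]$ and the shifted one-dimensional lattices $\mathrm{Lat}_j(u) = \{u + \sigma_s^{(j)}\}_s$ are pairwise disjoint. The construction is inductive: having already chosen a set $\widetilde{S}_j$ of $2^j$ jump points from scales $0,\ldots,j$, at scale $j+1$ at least $2^{j+1} - 2^j = 2^j$ of the horizontal offsets give lattices avoiding every first coordinate appearing in $\widetilde{S}_j$, and likewise for vertical offsets; pairing these up produces $2^j$ new two-dimensional sublattices, each yielding one more jump point via the corollary. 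The outcome is that \emph{no two points of $\widetilde{S}_{J-1} = \bigcup_j S_j$ share a row or a column}. After Cauchy--Schwarz over the step of length at most $2^{j+1}$, the local sum is enlarged to the full row and column, so each $(m,n) \in S_j$ gives
\[
\sum_{k=0}^{N-1}|\Delta W(k,n)|^2 + \sum_{\ell=0}^{N-1}|\Gamma W(m,\ell)|^2 \geq \frac{\delta^2}{2^{j+1}},
\]
and distinctness of rows and columns makes these full row/column sums disjoint pieces of $\beta(b,N)$. Summing over $j$ with $|S_j| = 2^{j-1}$ gives $\beta(b,N) \geq J\delta^2/4 \geq c\,\delta^2 \log N$. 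This row/column bookkeeping, not any annular decomposition, is the missing ingredient in your argument.
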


\begin{remark} \label{homeworld} To see that it is necessary to include the  the Riesz basis bounds in the definitions of $\alpha_{A,B}(N)$ and $\beta_{A,B}(N)$ (and that these bounds cannot be replaced by $\ell_2^d$ normalization) consider the following example.  Let $h = \e^{-\pi (x-\tau)^2}$   with $\tau \in (0,1/2) \backslash \Q$. By \cite[Lemma 3.40]{folland1989}, it follows that $Zh$ has exactly one zero on the unit square  located at $(1/2+\tau,1/2)$. Since the first coordinate of this point is irrational, it follows by Proposition \ref{proposition:trump1} that the function $Z_d(b_{N})$ with $b_{N} = S_N P_N h$, where $d=N^2$, does not have a zero on $\Z_d^2$. Consequently, Proposition \ref{gaboriesz} implies that the finite Gabor system $G_d(b_N)$  is a basis for $\ell^d_2$, though the (lower) Riesz basis bounds of these bases decay as $N$ increases.
Straight-forward computations, using only the regularity and decay of the Gaussian, show that there   exist constants $C,D, E>0$,  such that the following hold:
\begin{itemize}
	\item[(i)]  $C \leq\|b_N\|_d\leq D$  for every $N\in\N$.
	\item[(ii)]  $\alpha(b_N,N)\leq E$ as $N \rightarrow \infty$.
\end{itemize}
This example shows that
the constant $c$ in Theorem \ref{finite blt} depends on the lower Riesz basis bounds in the definition of     $\alpha_{A,B}(N)$. Similarly, it may be shown that $C$ depends on the upper Riesz basis bound.
\end{remark}

\subsection{Proof for the lower bound in Theorem \ref{finite blt with betta}} \label{xcom4-1}

\begin{proof}
Given $N\geq 5$, we set $d=N^2$ and let $J\in\N$ be such that $2^J\leq N<2^{J+1}$.  Fix $j\in[0,J-1]  \cap \Z$. Let $\sigma^{(j)}_t$  and $\omega^{(j)}_t$ be as defined in (\ref{sigmaomega}) with $K_j=L_j=2^{J-j}$. That is,
\[
\sigma^{(j)}_t= \omega_t^{(j)} = \Big[t\frac{N}{2^{J-j}}\Big]\qquad t\in \Z.
\]
We note that
\begin{equation}\label{infsup}
2^j \leq \inf\{\Delta\sigma^{(j)}_t\}\leq \sup\{\Delta\sigma^{(j)}_t\} \leq 2^{j+1},
\end{equation}
where  both the infimum and supremum are taken over all $t\in \Z$. Indeed, to see this, consider separately the cases $N=2^J$ and $N>2^J$ and use the fact that all of the numbers involved in these inequalities are integers.

For $u,v \in [0,2^j-1]  \cap \Z$, write
\[
\mathrm{Lat}_j(u)=\big\{u+\sigma^{(j)}_s:  s \in [0,2^{J-j}-1] \cap \Z   \big\}
\]
and
\[
\mathrm{Lat}_j(u,v)=\big\{(u+\sigma^{(j)}_s,v+\sigma^{(j)}_t): s,t \in [0,2^{J-j}-1] \cap \Z \big\}.
\]

Note that due to (\ref{infsup}), if $u_1\neq u_2$ then   $\mathrm{Lat}_j(u_1) \cap \mathrm{Lat}_j(u_2) = \emptyset$, and if $(u_1,v_1)\neq (u_2,v_2)$ then   $\mathrm{Lat}_j(u_1,v_1) \cap \mathrm{Lat}_j(u_2,v_2) = \emptyset$.

Given $b\in \ell_2^d$, we denote $W=Z_d(b)$.  Set $\delta=2\sqrt{A}\sin (\pi/20)$, then, since $N\geq5$ Corollary \ref{lem:infidel1} implies that each of the sets $\mathrm{Lat}_j(u,v)$ contains a point   on which the function $W$ `jumps', i.e., where
\begin{equation} \label{inequality:jump}
\begin{aligned}
\delta^2 \leq \abs{\Delta_{(s)}W(u+\sigma^{(j)}_s,v+\sigma^{(j)}_t)}^2 + \abs{\Delta_{(t)}W(u+\sigma^{(j)}_s,v+\sigma^{(j)}_t)}^2.
\end{aligned}
\end{equation}
Our goal is to collect `jumps' of $W$ that are, in some sense, separated. We do this in an inductive process.

In the first step, let $j=0$.
By Corollary \ref{lem:infidel1}, there exists a point   $(m_0,n_0)$ in $\mathrm{Lat}_0(0,0)$ so that
 \eqref{inequality:jump} holds for this point. Let $\widetilde{S}_0=S_0 = \{(m_0,n_0)\}$.
Next, let $j=1$.
For $u\in\{0,1\}$,    the sets $\mathrm{Lat}_1(u)$ are disjoint, and so at least one of them does not contain the number $m_0$.  Let $u^{1}_1 \in \{0,1\}$ be such that the set  $\mathrm{Lat}_1(u_1^1)$ has this property, and, similarly, let   $v^{1}_1\in\{0,1\}$ be such that $\mathrm{Lat}_1(v^{1}_1)$ does not contain the number $n_0$. By Corollary \ref{lem:infidel1}, there exists a point $(m_1,n_1)$ in $\mathrm{Lat}_{1}(u^{1}_1,v^{1}_1)$ so that
 \eqref{inequality:jump} holds for this point. Let $S_1 = \{(m_1,n_1)\}$, and put $\widetilde{S_1}=S_0\cup S_1$. Note that the two points in $\widetilde{S}_1$ do not have the same value   in either coordinate.

We now consider  the general case. Assume that for some $1 \leq j  \leq J-2$ we have found sets  $S_j$, $\widetilde{S}_j$ and $\widetilde{S}_{j-1}$ so that $\widetilde{S}_j=\widetilde{S}_{j-1}\cup S_j$ and
\begin{itemize}
\item[i.] $\abs{\widetilde{S}_{j-1}}= |S_j|=2^{j-1}$ and  $|\widetilde{S}_j|=2^j$.
\item[ii.] Every point in ${S}_j$ satisfies condition \eqref{inequality:jump}.
\item[iii.] No two   points in $\widetilde{S}_j$ have the same value in either coordinate.
\end{itemize}
 We now construct the sets $S_{j+1}$ and $\widetilde{S}_{j+1}$. Consider the sets $\mathrm{Lat}_{j+1}(u)$ for $u\in [0,2^{j+1}-1]  \cap \Z$. These $2^{j+1}$ sets are disjoint and therefore at least $2^j$ of them do not contain any of the numbers that are   the first coordinates of the points in $\widetilde{S}_j$. We let these sets correspond to  $u^{j+1}_k \in [0,2^{j+1}-1]  \cap \Z$  for $1\leq k\leq 2^j$, and similarly, let $v^{j+1}_k \in[0,2^{j+1}-1]  \cap \Z$, for $1\leq k\leq 2^j$, be   so that no integer in $\mathrm{Lat}_{j+1}(v^{j+1}_k)$ coincide with the second coordinate of any point in   $\widetilde{S}_j$. By Corollary \ref{lem:infidel1}, there exists a point in each of the sets $\mathrm{Lat}_{j+1}(u^{j+1}_k,v^{j+1}_k)$ so that
 \eqref{inequality:jump} holds. Put $S_{j+1}$ to be the set containing all these points and let $\widetilde{S}_{j+1}=\widetilde{S}_j\cup S_{j+1}$. Note that $S_{j+1}$ and $\widetilde{S}_{j+1}$ satisfy all of the conditions (i),(ii) and (iii) above, with  $j$ replaced by $j+1$.

Now, for  a fixed $0 \leq j \leq J-1$, each point $(m,n)\in S_j$ is of the form $(m,n)=(u+\sigma^{(j)}_{s},v+\sigma^{(j)}_{t})$ for some $(u,v)\in [0,2^{j}-1]^2  \cap \Z^2$ and $(s,t)\in [0,2^{J-j}-1]^2  \cap \Z^2$. We observe that condition \eqref{inequality:jump} implies the following  for such a point $(m,n)$ (where we apply the Cauchy-Schwarz inequality and (\ref{infsup}) in the third step):
\begin{align*}
	 \delta^2  &\leq  \abs{\Delta_{(s)} W(u+\sigma^{(j)}_{s},v+\sigma^{(j)}_{t})}^2+ \abs{\Gamma_{(t)}  W(u+\sigma^{(j)}_{s},v+\sigma^{(j)}_{t})}^2 \\[4mm]
 &=\Bigabs{\sum_{k = u + \sigma_s^{(j)}}^{u + \sigma_{s+1}^{(j)}-1} \Delta  W(k,n)}^2 +   \Bigabs{\sum_{\ell =v+ \sigma_t^{(j)}}^{v+\sigma_{t+1}^{(j)}-1}  \Gamma  W(m,\ell)}^2\Big) \\[2mm]
	  &\leq   2^{j+1}\Big( \sum_{k = u + \sigma_s^{(j)}}^{u + \sigma_{s+1}^{(j)}-1}  \abs{\Delta  W(k,n)}^2 +  \sum_{\ell =v+ \sigma_t^{(j)}}^{v+\sigma_{t+1}^{(j)}-1}  \abs{\Gamma  W(m,\ell)}^2\Big) \\[2mm]
	&\leq 2^{j+1} \Big( \sum_{k=0}^{N-1} \abs{\Delta  W(k,n)}^2 + \sum_{\ell=0}^{N-1} \abs{\Gamma  W(m,\ell)}^2 \Big)
\end{align*}

The last step of the above computation follows by combining the observation that the summands are $N$-periodic, that is, $\abs{\Delta  W(m,n)}  = \abs{\Delta  W(m-N,n)}$ and $\abs{\Gamma  W(m,n)}  = \abs{\Gamma  W(m,n-N)}$, with the fact that,  by \eqref{infsup}, the number of terms in each sum is bounded   by $2^J$, and therefore also by $N$.

Since no two points of $\widetilde{S}_{J-1}=\cup_{j=0}^{J-1}S_j$ have the same value in either coordinate,  and  $|S_j|=2^{j-1}$, we obtain
\begin{equation}\label{noremorse}
\begin{aligned}
\beta( b,N)=&\sum_{m,n=0}^{N-1}\abs{\Delta  W(m,n)}+\sum_{m,n=0}^{N-1}\abs{\Gamma  W(m,n)} \\[2mm]
\geq&\sum_{(m,n)\in \widetilde{S}_{J-1}}\Big( \sum_{k = 0}^{N-1}  \abs{\Delta  W(k,n)}^2 +  \sum_{\ell = 0}^{N-1}  \abs{\Gamma  W(m, \ell)}^2\Big)\\[2mm]
=&\sum_{j=0}^{J-1}\sum_{(m,n)\in S_{j}}\Big( \sum_{k = 0}^{N-1}  \abs{\Delta  W(k,n)}^2 +  \sum_{\ell = 0}^{N-1}  \abs{\Gamma  W(m,\ell)}^2\Big)\\[2mm]
\geq &\sum_{j=0}^{J-1}\sum_{(m,n)\in S_{j}}\frac{\delta^2}{2^{j+1}}
=\frac{J\delta^2}{4}.
\end{aligned}
\end{equation}
As $J+1 \geq \log N/\log 2$, the desired lower inequality now follows.
\end{proof}
\begin{remark} \label{unremarkable}
Given the restriction $N\geq N_0$, it follows from Corollary \ref{lem:infidel1} that the $\delta$ appearing in the last proof may be choosen to be $\delta = 2\sqrt{A}\sin \pi (1/4 - 1/N_0)$. Moreover,  the inequality $J+1 \geq \log N/\log 2$ yields
	 \begin{align*}
	 	J  
		&\geq  \log N \cdot \Big( \frac{1}{\log 2} - \frac{1}{\log N_0} \Big).
	\end{align*}
	Plugging this into the estimate \eqref{noremorse}, we find that
	\begin{align*}
		\beta(N) 
		&\geq
		\log N \cdot   \Big( \frac{1}{\log 2} - \frac{1}{\log N_0}\Big) \cdot A \sin^2 \pi \Big(\frac{1}{4}- \frac{1}{N_0}\Big).
	\end{align*}
	For $N_0=5$, this  yields the estimate $c \geq A/50$, while as $N_0 \rightarrow \infty$ we get that
\[
c \geq \lim_{N_0\rightarrow \infty}\Big(\inf_{N\geq N_0}\frac{\beta_{A,B}(N)}{\log N}\Big)\geq \frac{A}{2\log 2}.
\]
%
\end{remark}

\subsection{Proof for the upper bound in Theorem \ref{finite blt with betta}} \label{xcom4-2}

We consider a function that first appeared  in \cite{benedetto_czaja_gadzinski_powell2003} (see also  \cite{nitzan_olsen2011}).  In these references, this function was used for similar purposes as here, namely, to provide examples of generators of orthonormal Gabor systems with close to optimal localisation.

To define the function, we first  let $\phi :  \R \rightarrow [0,1]$ denote a smooth function so that
\begin{equation*}
	\phi(t) = \left\{ \begin{aligned}  1 & \quad t \geq 1 \\[2mm] 0 & \quad t \leq 0 \end{aligned} \right.,
\end{equation*}
and    $\gamma : (0,1] \rightarrow [0,1]$   a smooth function so that
\begin{equation*}
	\gamma(t) =\left\{ \begin{aligned}  1  & \quad t \in (0, 1/4] \\[2mm] 0 & \quad t \in [1/2,1]
\end{aligned} \right..
\end{equation*}
Using these functions, we define
\begin{equation*}
	H(x,y) = \left\{\begin{aligned}
	\gamma(x) \phi\Big(\frac{y}{x}\Big)  +  \big(1 - \gamma(x) \big) \,  y & \qquad \text{if }x \in (0,1]  \\[2mm]
	1 &\qquad \text{if } x=0
	\end{aligned} \right..
\end{equation*}
Finally, on $[0,1]^2$, we define the function
\begin{equation*}
	G(x,y) =
	 \e^{2\pi \im H(x,y)},
\end{equation*}
which we extend (continuously on $\R^2\setminus\Z^2$)  to a quasi-periodic function on all of $\R^2$ (in a mild abuse of notation, we also denote the quasi-periodic extension   by $G(x,y)$).
\begin{figure}
	\includegraphics[scale=0.9]{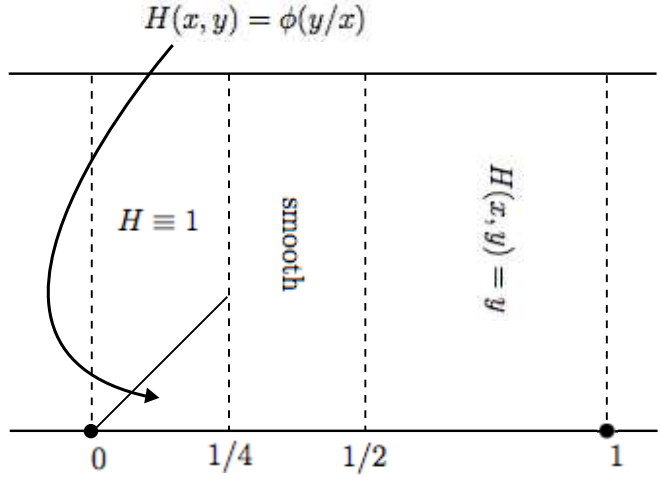}
	\caption{Illustration of the function $H(x,y)$ from \cite{benedetto_czaja_gadzinski_powell2003} used in the proof of the upper bound for Theorem \ref{finite blt with betta}.}
\end{figure}
Since the finite Zak transform is unitary, it follows that
there exists a sequence $b \in \ell^d_2$ of unit norm  so that
\begin{equation} \label{schmabel}
	Z_d(b) (m,n) = G\Big(\frac{m}{N}, \frac{n}{N}\Big).
\end{equation}
In particular, since $G$ is unimodular, it follows by Proposition \ref{gaboriesz} that the Gabor system  $G_d(b)$ is an orthonormal basis for $\ell_2^d$.

The following proposition provides the required estimate from above on $\beta_{1,1}(N)$.
\begin{proposition} \label{upperten}
	Let $b$ be the sequence defined above. Then, there exists  a constant $C>0$ so that for all $N \geq 2$ and $d = N^2$, we have
	\begin{equation} \label{croon}
		\beta(b,N)
\leq  C \log N.
	\end{equation}
\end{proposition}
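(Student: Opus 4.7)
The plan is to reduce the estimate to elementary calculus on the auxiliary function $H$, by exploiting the identity $Z_d(b)(m,n) = e^{2\pi i H(m/N, n/N)}$ from (\ref{schmabel}) together with the inequality $|e^{2\pi i \alpha} - e^{2\pi i \beta}|^2 \leq 4\pi^2 (\alpha - \beta)^2$. The central observation is that $\nabla H$ is uniformly bounded on most of $[0,1]^2$, and blows up only like $1/x$ inside a thin wedge $W = \{(x,y) : 0 \leq y \leq x\}$ emanating from the origin.

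First I would verify the derivative estimates by direct computation. On $[1/4, 1] \times [0, 1]$ both $\partial_x H$ and $\partial_y H$ are bounded by a universal constant depending only on $\phi$ and $\gamma$. On $(0, 1/4] \times [0,1]$, where $\gamma(x) = 1$ and $H(x,y) = \phi(y/x)$, one has $\partial_y H = \phi'(y/x)/x$ and $\partial_x H = -\phi'(y/x)\, y/x^2$, both bounded by $\|\phi'\|_\infty / x$ and vanishing outside $W$ since $\phi'$ is supported in $[0,1]$. Thus the singular set of $\nabla H$ is confined to a wedge whose width at height $x$ is only $x$.

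Next I would apply the fundamental theorem of calculus to bound each summand,
\begin{equation*}
	|\Delta Z_d(b)(m,n)|^2 \leq \frac{4\pi^2}{N^2} \sup_{\xi \in [m/N, (m+1)/N]} |\partial_x H(\xi, n/N)|^2,
\end{equation*}
with the analogous bound for $\Gamma Z_d(b)(m,n)$ involving $\partial_y H$, and split the lattice $[0,N-1]^2 \cap \Z^2$ into three parts. In the regular region ($m \geq N/4$ for the $\Delta$-sum, $n \geq N/4$ for the $\Gamma$-sum), each summand is $O(1/N^2)$ and the total contribution is $O(1)$. In the singular region $1 \leq m \leq N/4$ with the horizontal or vertical step meeting $W$, each summand is at most $C/m^2$ while there are only $O(m)$ admissible values of $n$ per $m$, yielding $\sum_{m=1}^{N/4} m \cdot C/m^2 \lesssim \log N$. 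In the complementary region within the strip $x \leq 1/4$ the gradient vanishes identically and there is no contribution.

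The main obstacle is the bookkeeping for boundary and wrap-around terms. The edge $m = 0$ is trivial: $H(0, y) \equiv 1$, and for $N \geq 4$ a direct computation shows $H(1/N, n/N) \in \{0, 1\}$ for every $n$, so $G(1/N, n/N) = 1 = G(0, n/N)$ identically and $\Delta Z_d(b)(0, n) = 0$. The wrap-around $m = N-1 \to N$ is also trivial because both $(N-1)/N$ and $1$ lie in $[1/2, 1]$ where $H(x, y) = y$, so the two values of $G$ coincide. The analogous wrap-around in $n$ vanishes by exact $1$-periodicity of $H$ in $y$. Finally, the case where a step $[m/N, (m+1)/N]$ or $[n/N, (n+1)/N]$ straddles the wedge boundary $y = x$ is absorbed into the $\log N$ estimate, since the bound $|\nabla H| \leq C/x$ holds uniformly on the entire step by the smoothness of $\phi$. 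The small-$N$ cases $N \in \{2, 3\}$ are absorbed by enlarging the constant $C$.
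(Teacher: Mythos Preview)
Your approach is correct and essentially identical to the paper's: both partition the lattice into a regular region where $\nabla H$ is uniformly bounded (the paper's $A_2$), the thin wedge $\{0<y\le x,\ x\le 1/8\}$ where $|\nabla H|\lesssim 1/x$ (the paper's $A_1$), and a trivial edge piece (the paper's $A_0$), then extract $\log N$ from the harmonic sum $\sum_m m\cdot C/m^2$ over the wedge via the Mean Value Theorem. One small slip of wording: it is $G=e^{2\pi i H}$, not $H$ itself, that is exactly $1$-periodic in $y$ (e.g.\ $H(x,0)=0$ while $H(x,1)=1$ for $x\in(0,1/4]$); your wrap-around argument for $n=N-1$ is still fine once you apply the derivative bound on the segment $[(N-1)/N,1]$ and invoke the periodicity of $G$ rather than of $H$.
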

\begin{proof}
In this proof, we denote by $C$  positive constants which may change from line to line. In light of \eqref{schmabel}, we need to estimate the expression
\begin{equation*}
	\underbrace{\sum_{m,n=0}^{N-1} \Bigabs{\Delta G\Big(\frac{m}{N}, \frac{n}{N}\Big)}^2}_{:=(I)} + \underbrace{\sum_{m,n=0}^{N-1} \Bigabs{\Gamma G\Big(\frac{m}{N}, \frac{n}{N}\Big)}^2}_{:=(II)}.
\end{equation*}

To estimate $(I)$, we make the following partition of  the set $ [0,N-1]^2  \cap \Z^2$:
\begin{align*}
	A_0 &= \{ (m,n) \in \Z^2 :  m \in [0,N/8], n = 0\} \\[2mm]
	A_1 &= \{ (m,n) \in \Z^2 :   m \in [1,N/8] \text{ and } n \in [1,m]   \} \\[2mm]
	A_2 &=  ([0,N-1]^2 \cap \Z^2) \backslash (A_0 \cup A_1)
\end{align*}

On $A_0$, the values of $G(m/N,n/N)$ are constant, so
\begin{align*}
	 \sum_{(m,n) \in A_0} \Bigabs{\Delta G\Big(\frac{m}{N}, \frac{n}{N}\Big)}^2   = 0.
\end{align*}

On $A_2$, we use the fact that  the function $G$ is   $C^\infty$ on  $\R^2 \backslash \Z^2$, and moreover, that on the set $\{(x,y)\in [0,1]^2:y\geq x\vee x\geq 1/8\}$ both  $G$, and its derivatives, are continuous.
Indeed, this means that we are justified in using the Mean Value theorem for $(m,n) \in A_2$ to make the estimate
\begin{align*}
	\Bigabs{\Delta G\Big(\frac{m}{N}, \frac{n}{N}\Big)} &= \frac{1}{N} \cdot \Bigabs{(\partial_x G)\Big(\mu_{m,n}, \frac{n}{N}\Big)} \leq \frac{C}{N},
\end{align*}
where $\mu_{m,n} \in (m/N,(m+1)/N)$ and $C = C(G) > 0$ is a constant  not depending on $N$. It follows immediately that
\begin{equation*}
	 \sum_{(m,n) \in A_2} \Bigabs{\Delta G\Big(\frac{m}{N}, \frac{n}{N}\Big)}^2
	\leq
	\frac{1}{N^2} \cdot C \abs{A_2 } \leq C.
\end{equation*}

On  $A_1$, we do the computation
\begin{equation*}
	 \sum_{(m,n) \in A_1}  \Bigabs{\Delta G\Big(\frac{m}{N}, \frac{n}{N}\Big)}^2
	=
	\underbrace{  \sum_{m=1}^{ [N/8]} \, \, \sum_{n=1}^{m}   \Bigabs{(\partial_x G\Big(\mu_{m,n}, \frac{n}{N}\Big)}^2  \frac{1}{N^2} }_{:=\,(*)},
\end{equation*}
 where  $\mu_{m,n} \in (m/N,(m+1)/N)$ as before. To estimate $(*)$, we compute
\begin{align*}
	\Bigabs{(\partial_x G)\Big(x, \frac{n}{N}\Big)} &= 2\pi \Bigabs{(\partial_x H)\Big(x, \frac{n}{N}\Big)} \\[2mm]
	&= 2 \pi \Bigabs{\partial_x \phi\Big(\frac{n}{Nx}\Big)} \\[2mm]
	&= 2 \pi \Bigabs{  \phi'\Big(\frac{n}{Nx}\Big)} \cdot \frac{n}{Nx^2}.
\end{align*}
This allows the bound
\begin{align*}
	(\ast)
	&=   4 \pi^2 \sum_{m=1  }^{[N/8]}   \sum_{n = 1 }^{m}  \Bigabs{  \phi'\Big(\frac{n}{N \mu_{m,n}}\Big)}^2 \frac{n^2}{N^4 \mu_{m,n}^4} \\[2mm]
	&\leq \frac{4 \pi^2\norm{\phi'}_\infty^2}{N^4}   \sum_{m=1  }^{[N/8]}  \sum_{n = 1 }^{m}  \frac{ n^2}{(m/N)^4}  \\[2mm]	
	&\leq  4 \pi^2 \norm{\phi'}_\infty^2  \sum_{m=1  }^{[N/8]} \frac{1}{m^4} \cdot (m+1)^3    \\[2mm]
	&\leq  4 \pi^2\norm{\phi'}_\infty^2   \Big(  C + \log  N \Big).
\end{align*}

To estimate (II), we consider the corresponding sums over $A_2$ and over $B = A_0\cup A_1$. We skip the estimates for $A_2$, which are completely analogous to the corresponding estimates made above. Instead, we turn our  focus to the estimate for $B$.
As above, we begin by  using the Mean Value Theorem to write
\begin{equation*}
	\sum_{(m,n) \in B}  \Bigabs{\Gamma G\Big(\frac{m}{N}, \frac{n}{N}\Big)}^2
	=
	\underbrace{  \sum_{m=1}^{[N/8]} \, \, \sum_{n = 0}^{m}   \abs{(\partial_y G)\Big(\frac{m}{N}, \nu_{m,n}\Big)}^2 \frac{1}{N^2} }_{:=\,(**)},
\end{equation*}
where $\nu_{m,n}\in [n/N,(n+1)/N]$. To estimate $(**)$, we compute
\begin{align*}
	\Bigabs{(\partial_y G)\Big(\frac{m}{N}, y\Big)} &= 2\pi \Bigabs{(\partial_y H)\Big(\frac{m}{N}, y\Big)} \\[2mm]
	&= 2\pi \Bigabs{\partial_y \phi\Big(\frac{Ny}{m}\Big)} \\[2mm]
	&= 2\pi \Bigabs{ \phi'\Big(\frac{Ny}{m}\Big)} \cdot \frac{N}{m}.
\end{align*}
This allows the estimate
\begin{align*}
	(**) &=  4\pi^2 \sum_{m=1}^{[N/8]} \, \, \sum_{n = 0}^{m}  \Bigabs{ \phi'\Big(\frac{N \nu_{m,n}}{m}\Big)}^2 \cdot \frac{1}{N^2} \cdot  \frac{N^2}{m^2} \\[2mm]
	&\leq  4\pi^2 \norm{\phi'}^2_\infty \sum_{m=1}^{[N/8]} \, \, \sum_{n = 0}^{m}  \frac{1}{m^2} \\[2mm]
	&\leq 4\pi^2 \norm{\phi'}^2_\infty  \sum_{m=1}^{[N/8]} \frac{ m+1}{ m^2} \\[2mm]
	&\leq  4\pi^2 \norm{\phi'}^2_\infty  \Big( C + \log  N \Big).
\end{align*}
\end{proof}
\begin{remark} \label{unremarkabletoo}
	To determine a bound for the constant $C$ of inequality \eqref{croon}, observe that we can actually choose $\phi$ to be piecewise linear, and therefore to satisfy  $\norm{\phi'}_\infty^2 \leq 1$.
Moreover, observe that
	\begin{equation*}
		(*) + (**) \leq 8\pi^2 (C_0 + \log N ),
	\end{equation*}
	where $C_0$ is some positive constant. Since the remaining sums over $A_2$ only contribute to  $C_0$, we conclude that asymptotically,

\[
C \leq  \lim_{N_0\rightarrow \infty} \Big(\sup_{N\geq N_0}\frac{\beta_{1,1}(N)}{\log N}\Big) \leq 8 \pi^2.
\]
\end{remark}

\section{A quantitative Balian-Low type theorem in finite dimensions} \label{xcom5}

In this section, we prove  a finite  dimensional version of the quantitative Balian-Low inequality. For the most part,  we follow the main ideas appearing in our paper \cite{nitzan_olsen2013}.

\subsection{Auxiliary results} The estimate in the following lemma is not optimal, but rather,  chosen to simplify the presentation. 
\begin{lemma}\label{rho-lemma}
	Let   $\rho : \R \rightarrow \R$ be the inverse Fourier transform of
	\begin{equation*}
		 \hat{\rho}(\xi) = \left\{ \begin{aligned}  1 & \qquad \abs{\xi} \leq 1/2,  \\[2mm]  2(1 - \xi\operatorname{sgn}(\xi)) & \qquad 1/2 \leq \abs{\xi} \leq 1, \\[2mm] 0 & \qquad \abs{\xi} \geq 1.     \end{aligned} \right.
	\end{equation*}
	Then
	\begin{equation*}
		\int_\R \abs{\rho'(t)} \dif t \leq 10.
	\end{equation*}
\end{lemma}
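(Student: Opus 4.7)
My strategy is to factor $\rho$ as a product of two band-limited $L^2$ functions and then estimate via Leibniz together with Cauchy--Schwarz.

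The trapezoidal shape of $\hat\rho$ suggests writing it as a convolution of two symmetric indicators. Set $\hat\phi_1 = \mathbf{1}_{[-3/4,\,3/4]}$ and $\hat\phi_2 = \mathbf{1}_{[-1/4,\,1/4]}$. A direct computation shows that $\hat\phi_1*\hat\phi_2$ is a trapezoid of height $1/2$ on $[-1/2, 1/2]$, decreasing linearly to $0$ at $\pm 1$. Consequently $\hat\rho = 2\,\hat\phi_1*\hat\phi_2$, and by the convolution theorem,
\[
\rho(t) = 2\,\phi_1(t)\,\phi_2(t),
\]
where $\phi_i$ is the inverse Fourier transform of $\hat\phi_i$. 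Both $\phi_i$ belong to $L^2(\R)$, and their derivatives are in $L^2$ as well, since $\xi \hat\phi_i(\xi)$ is bounded and compactly supported.

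From $\rho' = 2(\phi_1'\phi_2 + \phi_1 \phi_2')$, the triangle inequality and Cauchy--Schwarz yield
\[
\int_\R |\rho'(t)|\,\dif t \le 2\,\|\phi_1'\|_2\,\|\phi_2\|_2 + 2\,\|\phi_1\|_2\,\|\phi_2'\|_2.
\]
By Plancherel each norm reduces to a one-dimensional polynomial integral over the support of the corresponding indicator,
\[
\|\phi_i\|_2^2 = \int \hat\phi_i(\xi)^2 \,\dif\xi, \qquad \|\phi_i'\|_2^2 = 4\pi^2 \int \xi^2 \hat\phi_i(\xi)^2 \,\dif\xi.
\]
Evaluating these for the intervals $[-3/4,3/4]$ and $[-1/4,1/4]$ gives $\|\phi_1\|_2^2 = 3/2$, $\|\phi_2\|_2^2 = 1/2$, $\|\phi_1'\|_2^2 = 9\pi^2/8$, and $\|\phi_2'\|_2^2 = \pi^2/24$. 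Substituting produces the bound $2(3\pi/4)+2(\pi/4) = 2\pi < 10$.

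The only genuinely creative step is spotting the factorization of $\hat\rho$ as a convolution of indicators; once in hand, everything reduces to elementary polynomial integrals. The resulting bound $2\pi$ is well below $10$, consistent with the authors' remark that the estimate is not optimal.
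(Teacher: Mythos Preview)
Your proof is correct and takes a genuinely different route from the paper's. The paper splits $\int_\R |\rho'|$ at a cutoff $|t|=b$, bounds $|\rho'|$ by $2\pi\|\xi\hat\rho\|_{L^1}$ near the origin, and uses two integrations by parts to get $|\rho'(t)|\le 5/(\pi t^2)$ away from the origin; optimizing $b$ yields roughly $9.67$. You instead exploit the trapezoidal shape of $\hat\rho$ to factor $\rho=2\phi_1\phi_2$ with $\phi_1,\phi_2$ sinc functions, then combine the Leibniz rule with Cauchy--Schwarz and Plancherel. This is cleaner and sharper (you get $2\pi\approx 6.28$), and all differentiability issues are moot since $\phi_1,\phi_2$ are entire. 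The paper's approach is more robust in that it would adapt to any $\hat\rho$ with two bounded derivatives of compact support, whereas your argument hinges on the specific convolution structure; but for this particular lemma your method is both shorter and gives a better constant.
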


\begin{proof}
Fix $b>0$ to be chosen later. We make the split
\begin{equation*}
	\int_\R \abs{\rho'(t)} \dif t
	=
	\underbrace{\int_{-b}^b \abs{\rho'(t)} \dif t}_{=(I)} + \underbrace{\int_{\abs{t} \geq b} \abs{\rho'(t)} \dif t}_{=(II)}.
\end{equation*}
To estimate (I), we compute
\[
\begin{aligned}
|\rho'(t)| &= |\mathcal{F}^{-1} \Big(2\pi \im \xi \cdot \hat{\rho} \Big)(t)| \\[2mm]
&\leq 2\pi\|\xi \cdot \hat{\rho}\|_{L^1(\R)}
= {2\pi} \cdot \frac{7}{12} =  \frac{7\pi}{6} ,
\end{aligned}
\]
whence
\[
(I)\leq \frac{7\pi}{6} \cdot 2b= \frac{7\pi b}{3}.
\]

We turn to estimate (II). Integrating by parts, 
we find that
\begin{align*}
	\rho'(t) = \mathcal{F}^{-1} \Big(2\pi \im \xi \cdot \hat{\rho} \Big)(t)
	&=   2 \pi \im \int_\R \xi   \hat{\rho}(\xi) \cdot \e^{2\pi \im \xi t} \, \dif \xi \\[2mm]
	&=   - 2 \pi \im \int_\R  (   \hat{\rho}+  \xi\hat{\rho}')  \cdot \frac{\e^{2\pi \im \xi t}}{2\pi \im t} \, \dif \xi \\[2mm]
	&=   2\pi \im \int_\R  (   2\hat{\rho}' +  \xi\hat{\rho}'')    \frac{ \e^{2\pi \im \xi t}}{(2\pi \im t)^2}  \, \dif \xi \\[2mm]
	&= \frac{1}{\pi \im t^2}  \int_\R     \hat{\rho}'(\xi) \e^{2\pi \im \xi t} \dif \xi + \frac{1}{\pi   t^2} \Big( 2 \sin (2\pi t) -  \sin(\pi t)\Big).
\end{align*}
This yields the bound
\begin{equation} \label{penis}
	\abs{\rho'(t)} \leq \frac{1}{\pi t^2}  \Big( \int_{\R}   \abs{\hat{\rho}'} \dif \xi  +2 + 1 \Big) =\frac{ 5}{\pi t^2},
\end{equation}
and the estimate
\[
(II)\leq \frac{10}{\pi}\int_b^{\infty}\frac{dt}{t^2}=\frac{10}{\pi b}.
\]
We compute the value of $b$ which minimizes the bound we obtained for (I)+(II) and find that this value is  $b=  \sqrt{30/7}/\pi$, which gives us the estimate $(I)+(II)\leq  9.67<10$.

\end{proof}
%

\begin{lemma}\label{conv-lemma}
Let $A, B>0$ and $N\geq    200 \sqrt{B/A}$. There exist positive constants $\delta=\delta(A)$ and $C=C(A,B)$ such that   the following holds (with $d=N^2$).  Let
\begin{itemize}
\item[(i)]   \quad $Q, R \in \Z$ such that $1\leq Q,R    \leq (N/16) \cdot  \sqrt{A/B}$,
\item[(ii)]   \quad $\phi,\psi \in \ell_2^d$ such that $\sum_n|\Delta\phi(n)|\leq  10   R$ and $\sum_n|\Delta\psi(n)|\leq  10 Q$,
\item[(iii)] \quad $b\in \ell_2^d$ such that $A \leq |Z_d (b)|^2 \leq B$.
\end{itemize}
Then, there exists a set $S\subset ([0,N-1]\cap\Z)^2$ of size $|S|\geq C N^2/ QR$
 such that all $(u,v)\in S$ satisfy either
\begin{align}\label{conv-ineq-1}
|Z_d(b)(u,v)-Z_d(b\ast \phi)(u,v)|\geq\delta, \qquad \text{or} \\[2mm]
\label{conv-ineq-2}
|Z_d( \mathcal{F}_d  b)(u,v)-Z_d(( \mathcal{F}_d b)\ast \psi)(u,v)|\geq\delta.
\end{align}
\end{lemma}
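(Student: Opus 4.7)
The plan is to exploit the tension between two facts about $W := Z_d(b)$. On the one hand, since $W$ is $N$-quasi-periodic with $|W|^2 \geq A$, Corollary \ref{lem:infidel1} forces many ``jumps'' of size at least $\delta_0 := 2\sqrt{A}\sin(\pi(1/4 - 1/N))$. On the other hand, $V := Z_d(b \ast \phi) = W \ast_1 \phi$ is smoothed by convolution with $\phi$, and by Lemma \ref{convolution inequality 0} cannot change much in the first coordinate when $\phi$ has small total variation. Every horizontal jump of $W$ will therefore force $W$ and $V$ to disagree substantially at one of its two endpoints, producing a point of $S$ satisfying \eqref{conv-ineq-1}. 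Vertical jumps of $W$ are treated symmetrically: the identity \eqref{Zak and Fourier} converts them into first-coordinate jumps of $\tilde W := Z_d(\mathcal{F}_d b)$, which are then compared to $\tilde V := Z_d((\mathcal{F}_d b) \ast \psi)$ in the same way, yielding points satisfying \eqref{conv-ineq-2}.

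To carry this out, I would fix integers $K = c_1 R\sqrt{B/A}$ and $L = c_2 Q\sqrt{B/A}$ for absolute constants $c_1, c_2$ chosen sufficiently large. The hypothesis $Q, R \leq (N/16)\sqrt{A/B}$ then ensures $K, L \leq N$, while $N \geq 200\sqrt{B/A}$ ensures both $K, L \geq 2$ and that $\delta_0$ is comparable to $\sqrt{A}$. With $\sigma_s = [sN/K]$ and $\omega_t = [tN/L]$ as in \eqref{sigmaomega}, Corollary \ref{lem:infidel1} applied at each translate $(u,v) \in [0,\lfloor N/K\rfloor) \times [0,\lfloor N/L\rfloor) \cap \Z^2$ produces indices $(s,t) \in [0,K-1] \times [0,L-1] \cap \Z^2$ realizing either a horizontal jump $|\Delta_{(s)} W(u+\sigma_s, v+\omega_t)| \geq \delta_0$ or a vertical jump $|\Gamma_{(t)} W(u+\sigma_s, v+\omega_t)| \geq \delta_0$.

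For a horizontal jump the step size is $k := \sigma_{s+1} - \sigma_s \leq N/K + 1$, and Lemma \ref{convolution inequality 0} combined with $\sum_n |\Delta\phi(n)| \leq 10R$ gives
\[
|V(u+\sigma_{s+1},v+\omega_t) - V(u+\sigma_s,v+\omega_t)| \leq \frac{k}{N}\sqrt{B}\cdot 10R \leq \delta_0/2
\]
once $c_1$ is taken large enough; the triangle inequality then yields a point of $\{(u+\sigma_s,v+\omega_t),\,(u+\sigma_{s+1},v+\omega_t)\}$ at which $|W - V| \geq \delta_0/4$. For a vertical jump I would rewrite $W(m,n) = \e^{2\pi \im mn/d}\tilde W(n, -m)$ using \eqref{Zak and Fourier}; the vertical jump of $W$ then translates into a first-coordinate jump of $\tilde W$ corrupted by a phase factor whose deviation from $1$ is bounded by $2\pi|m|(\omega_{t+1} - \omega_t)/d$, which is of order $1/L$. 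Choosing $c_2$ large enough that $\sqrt{B}$ times this phase error is absorbed into $\delta_0/4$, and then applying Lemma \ref{convolution inequality 0} to $\tilde W$ and $\psi$ with $\sum_n|\Delta\psi(n)| \leq 10Q$ and step size at most $N/L + 1$, I obtain an endpoint of the corresponding $\tilde W$-jump at which $|\tilde W - \tilde V| \geq \delta_0/8$.

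Finally, to count points I would argue that the minimum spacing between adjacent $\sigma_s$'s is $\lfloor N/K\rfloor$, which strictly exceeds the range of $u \in [0,\lfloor N/K\rfloor - 1]$, so distinct pairs $(u,s)$ give distinct first coordinates $u+\sigma_s$; the same holds for the second coordinate. Thus each translate $(u,v)$ contributes one chosen endpoint, and since a single site can play the role of the chosen endpoint for at most a bounded number of translates (as the right endpoint of one and the left endpoint of another), reducing to $[0,N-1]^2 \cap \Z^2$ via $N$-quasi-periodicity (which preserves both $|W-V|$ and $|\tilde W - \tilde V|$) gives $|S| \geq c\lfloor N/K\rfloor \lfloor N/L\rfloor$, which is of the required form $C(A,B)\,N^2/(QR)$. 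The main obstacle is the bookkeeping: the constants $c_1$ and $c_2$ must be coordinated so that the smoothness bound from Lemma \ref{convolution inequality 0}, the phase-error budget coming from \eqref{Zak and Fourier}, and the sublattice disjointness all close simultaneously under the stated hypotheses on $N$, $Q$, and $R$.
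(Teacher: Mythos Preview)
Your approach is correct and essentially identical to the paper's: choose $K\sim R\sqrt{B/A}$ and $L\sim Q\sqrt{B/A}$, apply Corollary~\ref{lem:infidel1} on each translate of the coarse sublattice $\{(\sigma_s,\omega_t)\}$, handle horizontal jumps via Lemma~\ref{convolution inequality 0} and vertical jumps via \eqref{Zak and Fourier} followed by Lemma~\ref{convolution inequality 0}, then count via disjointness of the translates. The only wrinkle is the counting step: the paper introduces a separate family $\mathrm{Lat}^*(u,v)=\{(N-v-\omega_t,\,u+\sigma_s)\}$ for the Fourier-side points and uses pigeonhole (losing a factor of~$2$), which is the actual source of bounded multiplicity---the overlap to track is between the horizontal family and the Fourier-side family, not between left and right endpoints within one family as your parenthetical suggests.
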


\begin{proof}[Proof of Lemma \ref{conv-lemma}]
For the given $A$ and for $N_0=200$, let $\delta_1 = 2\sqrt{A}\sin (\pi/4-\pi/{200})$ be the constant from Corollary \ref{lem:infidel1}. Notice that the integers $Q, R$ and $N$ satisfy
\[
\frac{\sqrt{B}}{\delta_1}\max\Big\{ {\frac{200 R}{9}}, {\frac{200 Q}{9}}, {80\pi} \Big\}  {\leq} N.
\]
Therefore, there exist integers $K$ and $L$ that satisfy
\begin{equation} \label{lynch}
	{\frac{200\sqrt{B} R}{9\delta_1}} {\leq} K \leq N  \qquad \text{and} \qquad \frac{\sqrt{B}}{\delta_1}  \max\Big\{ {\frac{200  Q}{9}, 80\pi} \Big\} {\leq} L \leq N.
\end{equation}
We choose $K, L$ to be the smallest such integers.

For $s,t \in \Z$,   let $\sigma_s$ and $\omega_t$ be as defined in (\ref{sigmaomega}), that is,
\[
		\sigma_s = \Big[ \frac{s N}{K} \Big],\qquad \textrm{and}\qquad\omega_t = \Big[ \frac{t N}{L} \Big].
\]
Note that $\sigma_K=\omega_L=N$ and denote $\Sigma = \inf\{\Delta_{(s)}\sigma_s\}$ and $\Omega= \inf\{\Delta_{(t)}\omega_t\}$.
The conditions on $K$ and $L$ imply that, for some constant $C_1=C_1(A,B)$,
\begin{equation*}
	\Sigma\cdot\Omega \geq C_1 \frac{ N^2}{QR}.
\end{equation*}

For $(u,v)\in ([0,\Sigma -1]\cap\Z)\times ([0,\Omega -1]\cap\Z)$, write
\[
\mathrm{Lat}(u,v)=\big\{(u+\sigma_s,v+\omega_t): (s,t)\in ([0,K-1]\cap\Z)\times([0,L-1]\cap\Z)\big\}.
\]
and
\[
\mathrm{Lat^*}(u,v)=\big\{(N-v-\omega_t,u+\sigma_s): (s,t)\in ([0,K-1]\cap\Z)\times([0,L-1]\cap\Z)\big\}.
\]
Note that if $(u_1,v_1)\neq (u_2,v_2)$ then it follows from the definition of $\Sigma$ and $\Omega$ that  $\mathrm{Lat}(u_1,v_1) \cap \mathrm{Lat}(u_2,v_2) = \emptyset$ and $\mathrm{Lat^*}(u_1,v_1) \cap \mathrm{Lat^*}(u_2,v_2) = \emptyset$. We will show that for each $(u,v)$, either the set $\mathrm{Lat}(u,v)$ or the set  $\mathrm{Lat^*}(u,v)$ contains a point from $([0,N-1]\cap\Z)^2$ which satisfies condition (\ref{conv-ineq-1}) or (\ref{conv-ineq-2}), respectively. Putting $C=C_1/2$ our proof will then be complete.

So,  fix $(u,v)\in ([0,\Sigma -1]\cap\Z)\times ([0,\Omega -1]\cap\Z)$. Due to Corollary \ref{lem:infidel1}, there exists at least one point $(s,t)\in ([0,K-1]\cap\Z)\times([0,L-1]\cap\Z)$ such that
(\ref{baloney10}) or (\ref{baloney11}) hold with $W= Z_d(b)$.

Assume first  that (\ref{baloney10}) holds. Lemma \ref{convolution inequality 0},  the estimate $|\Delta_{(s)}\sigma_s|\leq 2N/K$, and condition (\ref{lynch}), imply that
\[
		\abs{\Delta_{(s)} Z_d (b \ast \phi)(u+\sigma_s,v+\omega_t)}
		\leq  \frac{2N/K}{N}\cdot\sqrt{B}\cdot {10} R \leq \frac{9\delta_1}{10}.
	\]
It follows that either $(u+\sigma_s,v+\omega_t)$ or $(u+\sigma_{(s+1)},v+\omega_t)$ satisfy (\ref{conv-ineq-1}) with $\delta=\delta_1/20$. It may happen that this chosen point  does not belong to $([0,N-1]\cap\Z)^2$, that is, it is of the form $(u+N,v+\omega_t)$. In that case, due to the $N$-quasi-periodicity of the Zak transform, the point $(u,v+\omega_t)$ satisfies the same inequality, and is in $([0,N-1]\cap\Z)^2$.

Assume now that (\ref{baloney11}) holds for $(u,v)$ and $(s,t)$. Then (\ref{Zak and Fourier}), the $N$-quasi-periodicity of $Z_d(b)$, and the estimate $|\Delta_{(t)}\omega_t|\leq  {2N/L}$, imply that
\begin{equation*}
\begin{aligned}
&|\Delta_{(t)} Z_d(\mathcal{F}_d b)(N-v-\omega_t,u+\sigma_s)| \\[2mm]
&=\Bigabs{Z_d(b)(u+\sigma_s,v+\omega_{t+1})- e^{2\pi i\frac{(\omega_{t+1}-\omega_t)(u+\sigma_s)}{d}}Z_d(b)(u+\sigma_s,v+\omega_{t})}\\[2mm]
&\geq |\Gamma_{(t)} Z_d(b)(u+\sigma_s,v+\omega_{t})|- \sqrt{B}|e^{2\pi i\frac{(\omega_{t+1}-\omega_t)(u+\sigma_s)}{d}}-1|
\\[2mm]
&\geq \delta_1- {\frac{4\pi \sqrt{B}}{L}} \geq \frac{ 19\delta_1}{20}.
\end{aligned}
\end{equation*}
On the other hand, as in the previous case, we have
\[
		\abs{ \Delta_{(t)} Z_d(\mathcal{F}_db\ast \psi)(N-v-\omega_t,u+\sigma_s)}
		\leq \frac{{20}\sqrt{B} Q}{L} < \frac{9\delta_1}{10}.
	\]
It follows that either $(N-v-\omega_t,u+\sigma_s)$ or $(N-v-\omega_{(t+1)},u+\sigma_s)$ satisfy (\ref{conv-ineq-2}) with $\delta=\delta_1/40$. It may happen that this chosen point is the point $(-v,u+\sigma_s)$. In that case, due to the $N$-quasi-periodicity of the Zak transform, the point $(N-v,u+\sigma_s)$ satisfies the same inequality, and is in $([0,N-1]\cap\Z)^2$.
\end{proof}

\subsection{A proof for Theorem \ref{finite quantitative BLT 1}}

We are now ready to prove Theorem \ref{finite quantitative BLT 1}.  In fact, we prove the  more general version referred to in Remark \ref{finite quantitative BLT 2}  which we formulate as follows:

\begin{theorem} \label{finite quantitative BLT 1 -rb}
	Let $A,B>0$. There exists a constant $C=C(A,B)>0$ so that the following holds. Let $N \geq 200 \sqrt{B/A}$ and let $b \in \ell_2^d$ (where $d=N^2$) be  such that $G_d(b)$ is a  basis in $\ell_2^d$ with Riesz basis bounds $A$ and $B$. Then, for all  positive integers $1 \leq Q, R \leq  ( N/{16})\cdot \sqrt{A/B}$,  we have	
	\begin{equation*}
		\frac{1}{N}  \sum_{j=N  Q}^{d-1} \abs{b(j)}^2 + \frac{1}{N} \sum_{k= N  R}^{d-1} \abs{\mathcal{F}_d b(k)}^2   \geq \frac{C}{ QR}.
	\end{equation*}
\end{theorem}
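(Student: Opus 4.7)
The plan is to carry out the finite-dimensional analog of the strategy from \cite{nitzan_olsen2013}, combining Lemma \ref{conv-lemma} with Parseval-type estimates. I choose low-pass convolvers $\phi, \psi \in \ell_2^d$ whose finite Fourier transforms are concentrated near zero, so that $b - b \ast \phi$ and $\mathcal{F}_d b - (\mathcal{F}_d b) \ast \psi$ are controlled by the relevant tails of $\mathcal{F}_d b$ and $b$. Lemma \ref{conv-lemma} supplies a set $S$ of cardinality at least $CN^2/(QR)$ where the Zak transforms of $b$ (resp.\ $\mathcal{F}_d b$) and their convolutions with $\phi$ (resp.\ $\psi$) disagree by at least a fixed $\delta>0$; Parseval then converts this abundance of discrepancies into the sought lower bound on the tail sums.

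For the construction, I take $\rho$ from Lemma \ref{rho-lemma} and form the dilations $\rho_R(t) = 2R\, \rho(2Rt)$ and $\rho_Q(t) = 2Q\, \rho(2Qt)$, and set $\phi = S_N P_N \rho_R$, $\psi = S_N P_N \rho_Q$. Since $R \leq (N/16)\sqrt{A/B}$, only a single term of the periodization in Proposition \ref{proposition:trump1}(ii) contributes to $\mathcal{F}_d \phi = S_N P_N (\mathcal{F} \rho_R)$. Consequently, when $k \in \Z_d$ is represented by $k \in [-d/2, d/2)$, we have $\mathcal{F}_d \phi(k) = \mathcal{F}\rho(k/(2NR))$, which equals $1$ for $|k| \leq NR$, vanishes for $|k| \geq 2NR$, and is bounded by $1$ elsewhere. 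Lemma \ref{derder} combined with Lemma \ref{rho-lemma} gives the total-variation bound $\sum_n |\Delta \phi(n)| \leq \int_\R |\rho_R'(t)| \dif t \leq 20R$, with the analogous bound $\leq 20Q$ for $\psi$.

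Apply Lemma \ref{conv-lemma} (translating the Riesz basis hypothesis into $A \leq |Z_d b|^2 \leq B$ via Proposition \ref{gaboriesz}(ii)), with its constant ``$10$'' in hypothesis (ii) harmlessly replaced by ``$20$'' at the cost of altering the implicit constants $\delta(A)$ and $C(A,B)$. This yields a set $S \subseteq [0, N-1]^2 \cap \Z^2$ of size $|S| \geq C N^2/(QR)$, each point of which satisfies \eqref{conv-ineq-1} or \eqref{conv-ineq-2}. Partition $S = S_1 \cup S_2$ accordingly. Unitarity of $Z_d$ (Lemma \ref{lemma:basics on finite Zak}(ii)) gives $\delta^2 |S_1|/d \leq \|b - b \ast \phi\|_d^2$, and Plancherel combined with the convolution identity gives $\|b - b \ast \phi\|_d^2 = \tfrac{1}{N} \sum_k |\mathcal{F}_d b(k)|^2\, |1 - \mathcal{F}_d \phi(k)|^2 \leq \tfrac{1}{N} \sum_{k=NR}^{d-NR} |\mathcal{F}_d b(k)|^2 \leq \tfrac{1}{N} \sum_{k=NR}^{d-1} |\mathcal{F}_d b(k)|^2$. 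An analogous computation, using the identity $\mathcal{F}_d \mathcal{F}_d b(j) = b(-j)$ together with the evenness of $\psi$, yields $\delta^2 |S_2|/d \leq \tfrac{1}{N} \sum_{j=NQ}^{d-1} |b(j)|^2$. Summing and invoking $|S_1| + |S_2| \geq C N^2/(QR)$ produces the theorem.

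The main obstacle is balancing the Fourier support of $\phi$ against its total variation. A tighter cutoff (so that $|1 - \mathcal{F}_d \phi|^2$ vanishes out to index $NR$, matching the theorem's tail via the inclusion $[NR, d-NR] \subseteq [NR, d-1]$) forces a larger variation bound. The dilate $\rho_R(t) = 2R\,\rho(2Rt)$ is calibrated to put the cutoff exactly at $NR$, at the price of the factor $2$ in the total-variation bound relative to Lemma \ref{conv-lemma}'s hypothesis. As noted in Remark \ref{schmabel2}, this factor (possibly with a slight tightening of the admissible range of $Q,R$) is absorbed into the implicit constant $C = C(A,B)$ of the theorem.
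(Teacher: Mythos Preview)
Your approach is essentially the paper's: build low-pass convolvers from the sampled–periodised bump $\rho$ of Lemma \ref{rho-lemma}, invoke Lemma \ref{conv-lemma} to get many points where $Z_d(b)$ and $Z_d(b\ast\phi)$ (resp.\ the Fourier side) differ by $\delta$, and convert via unitarity of $Z_d$ and Plancherel into a tail bound. The $S_1/S_2$ split and the computation for the $\mathcal F_d b$ side are fine.

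The one substantive discrepancy is your calibration of the dilation. You take $\rho_R(t)=2R\,\rho(2Rt)$ so that $\mathcal F_d\phi$ is flat out to $|k|\le NR$, matching the theorem's tail directly; this costs $\sum_n|\Delta\phi(n)|\le 20R$, and you then ``replace $10$ by $20$'' in hypothesis (ii) of Lemma \ref{conv-lemma}. That replacement is not absorbed into $\delta(A)$ or $C(A,B)$: in the proof of Lemma \ref{conv-lemma} the bound $10R$ enters through the requirement $K\ge \tfrac{200\sqrt B\,R}{9\delta_1}$, so doubling it forces $K\ge \tfrac{400\sqrt B\,R}{9\delta_1}$, which in turn needs $R\le (N/32)\sqrt{A/B}$ rather than $(N/16)\sqrt{A/B}$. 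Thus your argument, as written, proves the theorem only on the halved range of $Q,R$, not the range in the statement.

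The paper avoids this by taking the undoubled dilation $\Phi(t)=R\,\rho(Rt)$, so that $\sum_n|\Delta\phi(n)|\le 10R$ exactly matches Lemma \ref{conv-lemma}; the price is that $\mathcal F_d\phi$ is flat only on $|k|\le NR/2$, yielding the ``centred'' tail
\[
\frac{1}{N}\sum_{k=[NR/2]+1}^{d-1-[NR/2]}|\mathcal F_d b(k)|^2+\frac{1}{N}\sum_{j=[NQ/2]+1}^{d-1-[NQ/2]}|b(j)|^2.
\]
A final time–frequency translation of $b$ (which preserves the bounds $A\le|Z_d b|^2\le B$, hence the Riesz bounds via Proposition \ref{gaboriesz}(ii)) shifts this centred tail to the one-sided tail in the statement while keeping the full range $Q,R\le (N/16)\sqrt{A/B}$. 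Either use this undoubled dilation plus the translation step, or accept the weaker range; your appeal to Remark \ref{schmabel2} does not close the gap for the theorem as stated.
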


\begin{proof}[Proof of Theorem \ref{finite quantitative BLT 1}]
Let $\rho: \R \rightarrow {\R}$ be  as in Lemma \ref{rho-lemma} and put $\Phi(t) =  R \rho(Rt)$ and $\Psi(t) = Q \rho( Qt)$. Denote $\phi = S_N P_N \Phi$ and $\psi = S_N P_N \Psi$.   By Lemma \ref{derder}, in combination with Lemma \ref{rho-lemma}, it follows that
$\sum_n|\Delta\phi(n)|\leq {10}  R$ and $\sum_n|\Delta\psi(n)|\leq  {10} Q$.
As a consequence, the integers $Q,R$ and $N$, as well as the functions $\phi,\psi$ and $b$, all satisfy the requirements of Lemma \ref{conv-lemma}.

As $Q,R < N/2$, by Proposition \ref{proposition:trump1}, and Remark \ref{hasta la vista}(i), we have $0\leq \mathcal{F}_d\phi, \mathcal{F}_d\psi \leq 1$ over $\ell_2^d$. Moreover, we also have $\mathcal{F}_d\phi(j)=1$ for $j\in  [0, RN/2] \cup [N^2- RN/2+1,N^2]$ and $\mathcal{F}_d\psi(j)=1$ for $j\in [0, QN/2] \cup [N^2- QN/2+1,N^2]$.  
It therefore follows from Lemma \ref{conv-lemma}, and the fact that  both the finite Zak transform and the finite Fourier transform are unitary, that, for some constant $C>0$, 
\[
\begin{aligned}
\frac{C}{QR}&\leq \|Z_d(b)-Z_d(b\ast \phi)\|_{\ell_2([0,N-1]^2)}^2 \\[2mm]
& \hspace{30mm}+\|Z_d( \mathcal{F}_d b)-Z_d(  \mathcal{F}_d b\ast \psi)\|_{\ell_2([0,N-1]^2)}^2\\[4mm]
&=\|b-b\ast \phi\|_{\ell^2_d}^2+\|  \mathcal{F}_d b-\mathcal{F}_d b \ast \psi\|_{\ell^2_d}^2\\[2mm]
&= \|\mathcal{F}_d b (1 - \mathcal{F}_d \phi) \|_{\ell^2_d}^2+\|   b (1-   \mathcal{F}_d\psi )\|_{\ell^2_d}^2\\[2mm]
&\leq  \frac{1}{N} \sum_{k= [NR/2]+1}^{d-1-[NR/2]} \abs{\mathcal{F}_d b(k)}^2 + \frac{1}{N}  \sum_{j=[N Q/2]+1}^{d-1-[NQ/2]} \abs{b(j)}^2.
\end{aligned}
\]
The result now follows by applying a suitable   time-frequency translate to the sequence $b$.
\end{proof}

\section{Applications to the continuous setting} \label{seksjon 6} \label{xcom6}
In this section we show that both the  classical and quantitative Balian-Low theorems  follow from their finite dimensional analogs.

\subsection{Relating continuous and finite signals -- revisited} \label{revisited}
We start by extending Proposition \ref{proposition:trump1} to the space $L^2(\R)$. We do this in four steps. In the first, we introduce   some additional notations. To this end, fix $N\in\N$, $N\geq 2$, and let $d=N^2$.

\textbf{ Step I:} By $(L^2[0,1/N]^2)^d$,  we denote
the space of all  d-tuples  $\{   \phi(j)\}_{j=0}^{d-1}$ with function entries  $\phi(j) \in L^2([0, {1}/{N}]^2)$, equipped with the norm  given by 
\[
\|\{  \phi(j)\}   \|_{(L^2[0,1/N]^2)^d}^2=N\sum_{j=0}^{d-1}\| \phi(j)\|^2_{L^2([0,1/N]^2)}.
\]
Note that the factor $N$  appears in the norm in order to take    the measure of $[0, {1}/{N}]$ into account. Similarly,  by $(L^2[0,1/N]^2)^{N\times N}$,    we denote
the space of all $N\times N$  matrices $\{  \phi(j,k)\}_{j,k=0}^{N-1}$ with function entries  $\phi(j,k) \in L^2([0, {1}/{N}]^2)$, equipped with  norm  given by
\[
\|\{  \phi(j,k) \}\|_{(L^2[0,1/N]^2)^{N\times N}}^2=\sum_{j,k=0}^{N-1}\|  \phi(j,k) \|^2_{L^2([0,1/N]^2)}.
\]

\textbf{  Step II:}  We consider functions $h(u,v;t)$ defined over $[0, {1}/{N}]^2\times \R$,
that are $N$-periodic with respect to  the variable $t$,  and are such that, for every fixed $t_0$, the restriction $h(u,v;t_0)$ is well defined almost everywhere and belongs to $L^2([0,1/N]^2)$.
Observe that the operator
\begin{equation*}
	S_N \, h  := \{ h(u,v;j/N) \}_{j=0}^{d-1}
\end{equation*}
trivially satisfies
\begin{align}
	&S_N h \in (L^2[0,{1}/{N}]^2)^d , \label{trivial1} \\[2mm]
	 &\mathcal{F}_dS_N h \in (L^2[0,{1}/{N}]^2)^d, \label{trivial2}\\[2mm]
	 &Z_d S_N h  \in (L^2[0, {1}/{N}]^2)^{N\times N}.\label{trivial3}
\end{align}
Above, we understand the notations $\mathcal{F}_d S_N h$ and $Z_d S_N h$ to mean that $\mathcal{F}_d$ and $Z_d$ operate on  $S_Nh$ with respect to the variable $j$ with $(u,v)$ being considered fixed.

\textbf{ Step III:}  Let $f\in L^2(\R)$. For $(u,v)\in [0, {1}/{N}]^2$, we define the function
$$f_{(u,v)} (t):= \e^{2\pi \im  vt} f(t+u),\qquad  t\in\R$$
and  formally  put
\begin{equation*}
	P_N {f}_{(u,v)}(t)  := \sum_{\ell= - \infty}^\infty f_{(u,v)}(t+ \ell N).
\end{equation*}
Note that if  $f$ is in the Schwarz class $\mathcal{S}(\R)$, then the function $h(u,v;t):=P_N {f}_{(u,v)}(t)$ satisfies the conditions on $h(u,v;t)$ described in Step II. The following lemma shows that this is true for all $f\in L^2(\R)$.
%
\begin{lemma}  For $f\in L^2(\R)$, let $f_{(u,v)} (t)$ be the function  defined above. Then, for  every fixed $t_0 \in \R$, we have
\begin{equation*}
	P_N {f}_{(u,v)}(t_0)   \in L^2([0,1/N]^2).
\end{equation*}
That is, the series defining $P_N f_{(u,v)}(t_0)$ converges in the norm of $L^2([0,1/N]^2)$.\end{lemma}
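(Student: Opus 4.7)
The plan is to reduce the convergence claim to Parseval's identity on the variable $v$ followed by a disjoint-intervals argument on $u$. Write the truncated partial sum as
\begin{equation*}
S_{L_1,L_2}(u,v) = \sum_{\ell=L_1}^{L_2} \e^{2\pi \im v(t_0+\ell N)} f(t_0+\ell N + u),
\end{equation*}
which clearly lies in $L^2([0,1/N]^2)$. To prove the full series converges in this norm, I will show that $(S_{L_1,L_2})$ is Cauchy and bounded by $\norm{f}_{L^2(\R)}/\sqrt{N}$.

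First I would factor out the $\ell$-independent phase $\e^{2\pi \im v t_0}$, which has modulus one, so that the squared modulus of the partial sum involves only the functions $\e^{2\pi \im v \ell N}$, $\ell \in \Z$. After the substitution $w=Nv$, these become the standard Fourier basis on $[0,1]$. Parseval's identity then gives
\begin{equation*}
\int_0^{1/N}\Bigabs{\sum_{\ell=L_1}^{L_2}\e^{2\pi\im v\ell N}f(t_0+\ell N+u)}^2 \dif v
= \frac{1}{N}\sum_{\ell=L_1}^{L_2}\abs{f(t_0+\ell N+u)}^2.
\end{equation*}
Integrating in $u$ over $[0,1/N]$ and substituting $s=t_0+\ell N+u$ yields
\begin{equation*}
\norm{S_{L_1,L_2}}_{L^2([0,1/N]^2)}^2 = \frac{1}{N}\sum_{\ell=L_1}^{L_2}\int_{t_0+\ell N}^{t_0+\ell N + 1/N}\abs{f(s)}^2\dif s.
\end{equation*}

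Since the intervals $I_\ell:=[t_0+\ell N,t_0+\ell N+1/N]$ are pairwise disjoint (their length $1/N$ is smaller than the spacing $N$ for $N\geq 2$), the sum on the right is bounded by $\norm{f}_{L^2(\R)}^2/N$ uniformly in $L_1,L_2$. Applying the same identity to the difference of two partial sums shows that, for $L_1<L_1'<L_2'<L_2$,
\begin{equation*}
\norm{S_{L_1,L_2}-S_{L_1',L_2'}}_{L^2([0,1/N]^2)}^2 = \frac{1}{N}\Bigl(\sum_{L_1\leq\ell<L_1'}+\sum_{L_2'<\ell\leq L_2}\Bigr)\int_{I_\ell}\abs{f(s)}^2\dif s,
\end{equation*}
which tends to $0$ as $L_1,-L_1',L_2',L_2\to\infty$, because it is a tail of the convergent series $\sum_\ell\int_{I_\ell}\abs{f}^2\leq\norm{f}_{L^2(\R)}^2$. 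Hence $(S_{L_1,L_2})$ is Cauchy in $L^2([0,1/N]^2)$ and converges to a limit which we identify with $P_N f_{(u,v)}(t_0)$.

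There is no real obstacle here; the only subtlety is recognizing that the exponentials $\{\e^{2\pi\im v\ell N}\}_{\ell\in\Z}$ form (up to the scaling $w=Nv$) an orthonormal basis on the shortened interval $[0,1/N]$, which is precisely what allows the cross-terms in $\ell$ to vanish and reduces the problem to a sum of disjoint integrals of $\abs{f}^2$.
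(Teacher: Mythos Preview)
Your proof is correct and follows essentially the same approach as the paper: both arguments exploit the orthonormality of $\{\sqrt{N}\e^{2\pi\im v\ell N}\}_{\ell\in\Z}$ on $[0,1/N]$ to reduce the $L^2$ norm of a partial (or tail) sum to a sum of integrals of $\abs{f}^2$ over disjoint intervals, and then use $f\in L^2(\R)$ to conclude. The paper phrases this as a single tail estimate $\sum_{|\ell|>L}$ rather than a two-sided Cauchy argument, but the content is identical.
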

\begin{proof}
The lemma follows from the following computation,  where, in the first step, we use the fact that $\{\sqrt{N}\e^{2\pi \im  N\ell v}\}_{\ell \in \Z}$ is an orthonormal basis over $[0,1/N]$:
\begin{align*}
  \int_0^{\frac{1}{N}}\int_0^{\frac{1}{N}} \Bigabs{\sum_{|\ell|>L} f(t_0+u +\ell  N)&\e^{2\pi \im  N\ell v}}^2  \dif v \dif u \\[2mm]
&=
\frac{1}{N}\sum_{|\ell|>L}\int_0^{\frac{1}{N}}| f(t_0+u +\ell N)|^2 \dif u\\[2mm]
&\leq \frac{1}{N}\int_{|x-t_0|\geq L/2}|f(x)|^2\dif x
\rightarrow 0, \quad \text{as } L \rightarrow \infty. 
\end{align*}
\end{proof}

It follows that, for $f\in L^2(\R)$, the operator $S_NP_Nf_{(u,v)}$ is well defined and that conditions \eqref{trivial1}, \eqref{trivial2}, \eqref{trivial3} hold with $h (u,v;t)=P_Nf_{(u,v)}(t)$.

\textbf{  Step IV:}  For   $f$ in $L^2(\R)$, we understand the notations $\mathcal{F} f_{(u,v)}(t)$ and $Z f_{(u,v)}(t)$ to mean that the Fourier transform and the Zak transform are taken with respect to the variable $t$, with $(u,v)$ being fixed. We now give our  extension of Proposition \ref{proposition:trump1}.
\begin{proposition}\label{proposition:trump2}
	\hspace{2mm}
Let $f\in L^2(\R)$. Then, the following hold.
\begin{itemize}
\item[(i)]   For all $N\in\N$ and $(m,n)\in\Z^2$, we have
		\begin{equation} \label{potus2}
			Z_{d} (S_N {P_N f_{(u,v)}})(m,n) = Zf_{(u,v)}(m/N, n/N),
		\end{equation}
where the equality holds in  the sense of ${L^2([0,{1}/{N}]^2)}$.
		\item[(ii)] For all $N\in\N$, we have
		\begin{equation} \label{eq:*2}
			  \mathcal{F}_{d}S_N P_N {f_{(u,v)}} = S_N P_N \mathcal{F} f_{(u,v)},
		\end{equation}
		where the equality holds in the sense of  $(L^2[0,{1}/{N}]^2)^d$.
	\end{itemize}
\end{proposition}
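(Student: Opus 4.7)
The approach is a density argument using Proposition \ref{proposition:trump1} as the base case. Since $f_{(u,v)} \in \mathcal{S}(\R)$ whenever $f \in \mathcal{S}(\R)$ (and satisfies the decay conditions of Remark \ref{hasta la vista}(i)), Proposition \ref{proposition:trump1} applied to $f_{(u,v)}$ gives the identities (i) and (ii) pointwise in $(u,v)$ for Schwartz $f$. Smoothness and decay upgrade pointwise equality to equality in the $L^2([0,1/N]^2)$ and $(L^2[0,1/N]^2)^d$ senses. Since $\mathcal{S}(\R)$ is dense in $L^2(\R)$, it therefore suffices to verify that both sides of each identity depend continuously on $f \in L^2(\R)$ in the target norms, so that the identities extend by continuity.

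For the left-hand sides, the computation already used in the lemma of Step III, based on Parseval's identity for the orthonormal basis $\{\sqrt{N}\,e^{2\pi i N\ell v}\}_{\ell \in \Z}$ of $L^2([0,1/N])$, gives the isometry
\begin{equation*}
\|S_N P_N f_{(u,v)}\|_{(L^2[0,1/N]^2)^d}^2 = \|f\|_{L^2(\R)}^2.
\end{equation*}
Since $\mathcal{F}_d$ and $Z_d$ are unitary on $\ell_2^d$, and act entry-wise with $(u,v)$ held fixed, they extend to isometries on the vector-valued spaces in Step I. Thus $f \mapsto \mathcal{F}_d S_N P_N f_{(u,v)}$ and $f \mapsto Z_d S_N P_N f_{(u,v)}$ are both bounded from $L^2(\R)$.

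For the right-hand side of (i), a direct expansion yields
\begin{equation*}
Z f_{(u,v)}(m/N, n/N) = e^{2\pi i v(m/N + u)}\, Zf(m/N + u,\, n/N - v),
\end{equation*}
and summing the squared $L^2([0,1/N]^2)$-norms over $(m,n) \in \{0, \ldots, N-1\}^2$ tiles $[0,1]^2$ (using quasi-periodicity of $Zf$), giving $\|Zf\|_{L^2([0,1]^2)}^2 = \|f\|_{L^2(\R)}^2$ by Lemma \ref{ztprop...}(ii). In particular, for each fixed $(m,n)$ the right-hand side of (i) defines a bounded map $L^2(\R) \to L^2([0,1/N]^2)$. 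For the right-hand side of (ii), a short calculation gives $\mathcal{F} f_{(u,v)} = e^{-2\pi i uv}\,\widehat f_{(-v,u)}$, so after swapping the roles of $u$ and $v$ as integration variables, continuity of $f \mapsto S_N P_N \mathcal{F} f_{(u,v)}$ reduces to the isometry of the previous paragraph applied to $\widehat f$.

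The main obstacle is the careful bookkeeping of the dual role of $(u,v)$: these variables parametrize the modulated translates $f_{(u,v)}$ and simultaneously serve as the integration variables in the norms on the target spaces. Once this separation is made precise via Fubini and the orthogonality argument from Step III, the boundedness estimates above, combined with density of $\mathcal{S}(\R)$ in $L^2(\R)$ and the Schwartz-case identities of Proposition \ref{proposition:trump1}, yield (i) and (ii) in the stated senses.
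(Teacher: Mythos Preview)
Your proof is correct and follows essentially the same approach as the paper: reduce to the Schwartz case via Proposition \ref{proposition:trump1}, then extend by density after verifying that all four operators in \eqref{potus2} and \eqref{eq:*2} are bounded (in fact isometric) from $L^2(\R)$ into the appropriate target spaces. The only cosmetic difference is that for the right-hand side of (i) you pass through the identity $Zf_{(u,v)}(x,y)=\e^{2\pi\im vx}Zf(x+u,y-v)$ and tile $[0,1]^2$, whereas the paper computes the norm directly via Parseval in the $v$-variable; note your phase factor should be $\e^{2\pi\im v\, m/N}$ rather than $\e^{2\pi\im v(m/N+u)}$, but this is unimodular and irrelevant for the norm estimate.
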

\begin{proof}
First, Proposition \ref{proposition:trump1} implies that \eqref{potus2} and  \eqref{eq:*2} hold for $f \in \mathcal{S}(\R)$  pointwise everywhere. Since $\mathcal{S}(\R)$ is dense in $L^2(\R)$, it is enough to show that the four  operators  implicitly defined by the left and right-hand sides of   \eqref{potus2} and \eqref{eq:*2} are  isometric (in fact, they are unitary).

To establish  \eqref{eq:*2}, we start by noting that
$
 T_1:\mathcal{S}(\R)\rightarrow (L^2[0,{1}/{N}]^2)^d$, defined by $(T_1 f) (u,v)= S_N {P_N f_{(u,v)}},
$
is isometric. Indeed,
\[
\begin{aligned}
\|S_N {P_N f_{(u,v)}}\|^2_{ (L^2[0,{1}/{N}]^2)^d}
&=N\sum_{j=0}^{N^2-1}\iint_{[0,\frac{1}{N}]^2} \ \Bigabs{\sum_{\ell=-\infty} ^{\infty}f\Big(\frac{j}{N}+u +\ell N\Big)\e^{2\pi \im  vlN}}^2\, \dif v \dif u\\[2mm]
&=\sum_{\ell=-\infty} ^{\infty}\sum_{j=0}^{N^2-1}\int_0^{\frac{1}{N}} \Bigabs{f\Big(\frac{j}{N}+u +\ell N\Big)}^2\, \dif u 
=\|f\|^2_{L^2(\R)}.
\end{aligned}
\]
Since the finite Fourier transform $\mathcal{F}_d$ is unitary, we conclude that $\mathcal{F}_d\circ T_1$, implicitly defined on the left hand side of \eqref{eq:*2}, is also unitary. Next, we note that $\mathcal{F}(f_{(u,v)})=e^{-2\pi i uv}(\mathcal{F}f)_{(-v,u)}$. Since both the Fourier transform $\mathcal{F}$ and the operator $T_1$ are unitary, we conclude that the operator implicitly defined on the right hand side of \eqref{eq:*2} is also unitary, and therefore that \eqref{eq:*2}  holds for all $f\in L^2(\R)$.

To obtain    \eqref{potus2} we first note that, since the discrete Zak transform is unitary, the above computation  also implies  that the operator defined by the left-hand side of \eqref{eq:*2}, which acts from $L^2(\R)$ to $(L^2[0, {1}/{N}]^2)^{N\times N}$, is isometric. To complete the proof of Proposition \ref{proposition:trump2}, it therefore remains to show that the same is true for the   operator
$
T_2:\mathcal{S}(\R)\mapsto (L^2[0,{1}/{N}]^2)^{N\times N}$, defined by $(T_2f) (u,v)= \{ Zf_{(u,v)}({m}/{N}, {n}/{N})\}_{m,n=0}^{N-1}.
$
To see this, we let  $f \in \mathcal{S}(\R)$, and make the following computation:
\[
\begin{aligned}
\Big\| \Big\{ Zf_{(u,v)}&\Big(\frac{m}{N}, \frac{n}{N}\Big)\Big\} \Big\|^2_{(L^2[0,{1}/{N}]^2)^{N\times N}}=\\[2mm]
&=\sum_{m,n=0}^{N-1}\iint_{[0,\frac{1}{N}]^2}  \Bigabs{\sum_{\ell=-\infty}^{\infty}f\Big(\frac{m}{N}+u-\ell \Big)  e^{-2\pi i v\ell }   e^{2\pi i\frac{n\ell}{N}}  }^2 \dif v \dif u \\[2mm]
&=\sum_{m=0}^{N-1}\int_{[0,\frac{1}{N}]}\int_{[0,1]}   \Bigabs{\sum_{\ell=-\infty}^{\infty}f\Big(\frac{m}{N}+u-\ell \Big)  e^{-2\pi i v\ell }   }^2 \dif v \dif u \\[2mm]
&= \sum_{m=0}^{N-1}\int_{[0,\frac{1}{N}]}  \sum_{\ell=-\infty}^{\infty}\Big |f\Big(\frac{m}{N}+u-\ell \Big)\Big |^2 \dif u = \norm{f}^2_{L^2(\R)}.
\end{aligned}
\]
\end{proof}

In particular, we have the following.

\begin{corollary}\label{everyoneriesz}
Let $g\in L^2(\R)$ be such that the Gabor system $G(g)$ is a Riesz basis in $L^2(\R)$ with lower and upper Riesz basis bounds $A$ and $B$, respectively. Then, for almost every $(u,v)\in [0,{1}/{N}]^2$, the Gabor system $G_d(S_NP_N g_{(u,v)})$ is a Riesz basis in $\ell_2^d$ with lower and upper Riesz basis bounds $\widetilde A$ and $\widetilde B$ satisfying $A \leq \widetilde A$ and $\widetilde B \leq B$, respectively.
\end{corollary}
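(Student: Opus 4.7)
The plan is to combine Proposition \ref{proposition:trump2}(i) with the Zak transform characterization of Riesz bases from Proposition \ref{gaboriesz}, after computing how the Zak transform interacts with the time--frequency shift $g \mapsto g_{(u,v)}$.

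Directly from the definition of the Zak transform,
\[
Z g_{(u,v)}(x,y) = \sum_{k\in \Z} \e^{2\pi \im v(x-k)} g(x-k+u) \e^{2\pi \im k y} = \e^{2\pi \im v x} Zg(x+u, y-v),
\]
so $\abs{Z g_{(u,v)}(x,y)} = \abs{Zg(x+u, y-v)}$. The quasi-periodicity of $Zg$ from Lemma \ref{ztprop...} implies that $\abs{Zg}^2$ is $\Z^2$-periodic, and therefore the hypothesis $A \leq \abs{Zg}^2 \leq B$ almost everywhere on $[0,1]^2$ (which is equivalent to $G(g)$ being a Riesz basis with bounds $A,B$ by Proposition \ref{gaboriesz}(i)) in fact holds almost everywhere on all of $\R^2$.

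Proposition \ref{proposition:trump2}(i) then gives, for each fixed $(m,n) \in \Z^2$, the equality of $Z_d(S_N P_N g_{(u,v)})(m,n)$ and $Zg_{(u,v)}(m/N, n/N)$ as elements of $L^2([0, 1/N]^2)$, so they agree for almost every $(u,v) \in [0,1/N]^2$. On the other hand, translation invariance of Lebesgue measure, together with the extended bound $A \leq \abs{Zg}^2 \leq B$ on $\R^2$, implies that for each fixed $(m,n)$,
\[
A \leq \abs{Zg(m/N + u, n/N - v)}^2 \leq B
\]
for almost every $(u,v) \in [0, 1/N]^2$. Intersecting the resulting $2 N^2$ null sets as $(m,n)$ ranges over $[0, N-1]^2 \cap \Z^2$ produces a full-measure subset of $[0, 1/N]^2$ on which $A \leq \abs{Z_d(S_N P_N g_{(u,v)})(m,n)}^2 \leq B$ holds simultaneously for every such $(m,n)$. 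For these $(u,v)$, Proposition \ref{gaboriesz}(ii) shows that $G_d(S_N P_N g_{(u,v)})$ is a basis in $\ell_2^d$ whose Riesz basis bounds $\widetilde A, \widetilde B$ satisfy $A \leq \widetilde A$ and $\widetilde B \leq B$.

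No serious obstacle is expected here: the argument is essentially bookkeeping of almost-everywhere equalities for the finitely many lattice points $(m,n) \in [0,N-1]^2 \cap \Z^2$, combined with the transparent identity $\abs{Z g_{(u,v)}(x,y)} = \abs{Zg(x+u,y-v)}$ and the passage from the boundedness of $\abs{Zg}^2$ on the unit square to its boundedness on $\R^2$ via periodicity. The only thing one has to be careful about is that Proposition \ref{proposition:trump2}(i) is an $L^2$-statement for each individual $(m,n)$, so the exceptional null set in the final conclusion is a finite union of $N^2$ null sets rather than a single one.
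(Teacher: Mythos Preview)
Your argument is correct and follows the same approach as the paper, which simply cites Proposition \ref{gaboriesz} together with Proposition \ref{proposition:trump2}(i); you have merely written out the details (the identity $|Zg_{(u,v)}(x,y)|=|Zg(x+u,y-v)|$, the extension of the bound to $\R^2$ by quasi-periodicity, and the finite union of null sets) that the paper leaves implicit. One small wording slip: you speak of ``intersecting the resulting $2N^2$ null sets'' to produce a full-measure set, whereas you mean intersecting their full-measure complements (equivalently, removing the union of the null sets); the mathematics is unaffected.
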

\begin{proof}
In light of Proposition \ref{gaboriesz}, this result follows immediately from part (i) of Proposition \ref{proposition:trump2}.
\end{proof}

\subsection{The classical Balian-Low theorem}

We start with the following lemma which relates the discrete and continuous derivatives of $L^2(\R)$ functions.

\begin{lemma}\label{theshitlemmawhichihatehatehate}
Let $f\in L^2(\R)$ and $N\in \N$. Denote $F_{(u,v)}=S_NP_Nf_{(u,v)}$. Then,
\[
N^2\|\Delta_{(j)} F_{(u,v)}(j)\|_{(L^2[0,{1}/{N}]^2)^d}^2\leq 2\int_{\R}|f'(t)|^2\dif t+ \frac{8\pi^2}{N^2}\int_\R|f(t)|^2\dif t,
\]
where the integral on the right-hand side is understood to be infinite if $f$ is not absolutely continuous, or if its derivative is not in $L^2(\R)$.
\end{lemma}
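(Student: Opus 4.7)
\medskip

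\noindent\textbf{Proof plan for Lemma \ref{theshitlemmawhichihatehatehate}.}

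If $f$ is not absolutely continuous or $f'\notin L^2(\R)$, the right-hand side is infinite and there is nothing to prove. Thus I may assume $f\in L^2(\R)\cap H^1(\R)$, and then $f_{(u,v)}$ is absolutely continuous with
\[
 (f_{(u,v)})'(s) = 2\pi\im v\, \e^{2\pi\im vs}f(s+u) + \e^{2\pi\im vs}f'(s+u).
\]
Writing $I_{j,\ell} := [j/N+\ell N,\, (j+1)/N+\ell N]$, the fundamental theorem of calculus gives
\[
 \Delta_{(j)} F_{(u,v)}(j) \;=\; \sum_{\ell\in\Z}\int_{I_{j,\ell}} (f_{(u,v)})'(s)\,\dif s \;=\; A_j(u,v)+B_j(u,v),
\]
where $A_j$ collects the $2\pi\im v$-term and $B_j$ the $f'$-term. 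The inequality $|\Delta_{(j)} F_{(u,v)}(j)|^2\leq 2|A_j|^2+2|B_j|^2$ reduces the task to bounding $\sum_j \|A_j\|_{L^2([0,1/N]^2)}^2$ and $\sum_j \|B_j\|_{L^2([0,1/N]^2)}^2$ separately.

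\medskip

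To handle $B_j$, I would change variables $s=t+\ell N$ to interchange the sum over $\ell$ with an integral on $[j/N,(j+1)/N]$:
\[
 B_j(u,v) \;=\; \int_{j/N}^{(j+1)/N}\e^{2\pi\im vt}\,\psi(t;u,v)\,\dif t,\qquad \psi(t;u,v):=\sum_{\ell\in\Z}\e^{2\pi\im v\ell N}f'(t+u+\ell N).
\]
Cauchy--Schwarz on each interval of length $1/N$ yields $|B_j|^2\leq \tfrac{1}{N}\int_{j/N}^{(j+1)/N}|\psi(t;u,v)|^2\dif t$. Summing over $j$ telescopes the integrals to an integral over $[0,N]$. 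Then I use the orthonormality of $\{\sqrt{N}\,\e^{2\pi\im N\ell v}\}_{\ell\in\Z}$ on $[0,1/N]$ to compute
\[
 \int_0^{1/N}|\psi(t;u,v)|^2\,\dif v \;=\; \frac{1}{N}\sum_{\ell\in\Z}|f'(t+u+\ell N)|^2,
\]
after which integrating the resulting $N$-periodic expression in $t$ from $0$ to $N$ and then in $u$ over $[0,1/N]$ unfolds the sum to $\tfrac{1}{N}\int_{\R}|f'|^2$. Tracking constants gives
\[
 \sum_{j=0}^{d-1}\|B_j\|_{L^2([0,1/N]^2)}^2 \;\leq\; \frac{1}{N^3}\int_{\R}|f'(t)|^2\,\dif t.
\]

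\medskip

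The bound for $A_j$ proceeds along identical lines with $f$ in place of $f'$, except that the prefactor $2\pi\im v$ remains outside and is controlled by $|v|\leq 1/N$, producing the extra factor $4\pi^2/N^2$. This yields
\[
 \sum_{j=0}^{d-1}\|A_j\|_{L^2([0,1/N]^2)}^2 \;\leq\; \frac{4\pi^2}{N^5}\int_{\R}|f(t)|^2\,\dif t.
\]
Recalling that $\|\{\phi(j)\}\|_{(L^2[0,1/N]^2)^d}^2 = N\sum_j\|\phi(j)\|_{L^2([0,1/N]^2)}^2$, multiplying by $2N^3$ and summing the two bounds gives precisely
\[
 N^2\|\Delta_{(j)} F_{(u,v)}\|_{(L^2[0,1/N]^2)^d}^2 \;\leq\; 2\int_{\R}|f'(t)|^2\dif t + \frac{8\pi^2}{N^2}\int_{\R}|f(t)|^2\dif t.
\]
The only delicate step is the bookkeeping: one must carefully track how the factor $N$ in the $(L^2[0,1/N]^2)^d$-norm, the factor $1/N$ from each Cauchy--Schwarz step, the factor $1/N$ from the $v$-orthogonality, and the factor $1/N$ from integrating in $u$ combine to produce exactly the claimed constants. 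Everything else is a routine application of the fundamental theorem of calculus, Cauchy--Schwarz, and Fubini.
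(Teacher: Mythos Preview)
Your proof is correct and follows essentially the same approach as the paper's. The only cosmetic difference is in how the splitting is performed: the paper writes the discrete difference as $a_u(j+1,\ell)\e^{2\pi\im v(j+1)/N}-a_u(j,\ell)\e^{2\pi\im vj/N}$ and splits algebraically into a $\Delta_{(j)}a_u$ term and an $(\e^{2\pi\im v/N}-1)a_u$ term (controlled via $|\e^{ix}-1|\leq|x|$), whereas you apply the fundamental theorem of calculus first and split via the product rule for $(f_{(u,v)})'$ (controlled via $|v|\leq 1/N$). After the split, both arguments use Cauchy--Schwarz, orthonormality of $\{\sqrt{N}\e^{2\pi\im N\ell v}\}$, and the same unfolding to reach identical constants.
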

\begin{proof}
Put $a_u(j,l)=f(u+\frac{j}{N}+ \ell N)$  and fix $j\in [0,N^2-1] \cap \Z$. Since $f\in L^2(\R)$, we have, with respect to the variable $\ell$, that $a_u(j,\ell) \in \ell_2(\Z)$ for almost every $u$. We compute

\[
\begin{aligned}
\iint_{[0,\frac{1}{N}]^2}&|\Delta_{(j)} F_{(u,v)}(j)|^2\, \dif v \dif u=\\[2mm]
&=\iint_{[0,\frac{1}{N}]^2}\Big|\sum_{\ell= - \infty}^\infty \Big( a_u(j+1,l) \, \e^{2\pi i \frac{ v (j+1)}{N}}- a_u(j,l) \, \e^{2\pi i \frac{  v j}{N}}\Big) \, \e^{2\pi \im  vlN}\Big |^2\, \dif v \dif u\\[2mm]
%
%
&\leq 2\iint_{[0,\frac{1}{N}]^2}\Big|\sum_{\ell= - \infty}^\infty  \Delta_{(j)} a_u(j,l) \,  \e^{2\pi \im v N \ell} \Big|^2 \dif u \dif v \\[2mm]
&\hspace{30mm}+ 2 \iint_{[0,\frac{1}{N}]^2}\Big| \Big( \e^{2\pi i \frac{  {v}}{N}} - 1\Big) \sum_{\ell= - \infty}^\infty  a_u(j,l) \,   \e^{2\pi \im  vlN}\Big |^2\, \dif v \dif u\\[2mm]
&\leq \frac{2}{N}\sum_{\ell= - \infty}^\infty\int_0^{\frac{1}{N}} | a_u(j+1,l)- a_u(j,l)  |^2\, \dif u
+ \frac{8\pi^2}{N^5}\sum_{\ell= - \infty}^\infty\int_0^{\frac{1}{N}} | a_u(j,l) |^2\, \dif u,
\end{aligned}
\]
where we applied the  inequality $|a+b|^2\leq 2|a|^2+2|b|^2$ in the second step and the inequality $|e^{ix}-1|\leq |x|$ in the third step. By the Cauchy-Schwartz inequality,  we get
\[
\begin{aligned}
\int_0^{\frac{1}{N}}\Big| a_u(j+1,l)-a_u(j,l)\Big |^2\, \dif u
&=\int_0^{\frac{1}{N}}\Big| \int_{\frac{j}{N}+ \ell N}^{\frac{j+1}{N}+ \ell N} f'(t+u)\dif t\Big |^2\, \dif u\\
&\leq\frac{1}{N}\int_0^{\frac{1}{N}}\int_{\frac{j}{N}+ \ell N}^{\frac{j+1}{N}+ \ell N} |f'(t+u)|^2\, \dif t \dif u.
\end{aligned}
\]
Combining these two estimates we find that
\begin{align*}
N^2\|&\Delta F_{(u,v)}(j)\|_{(L^2[0,{1}/{N}]^2)^d}^2=N^3\sum_{j=0}^{N^2-1}\iint_{[0,\frac{1}{N}]^2}|\Delta F_{(u,v)}(j)|^2\, \dif v \dif u\\[2mm]
&\leq 2N\sum_{\ell= - \infty}^\infty\sum_{j=0}^{N^2-1}\int_0^{\frac{1}{N}}\int_{\frac{j}{N}+ \ell N}^{\frac{j+1}{N}+ \ell N} |f'(t+u)|^2\, \dif t \dif u \\[2mm]
& \hspace{60mm}+ \frac{8\pi^2}{N^2}\sum_{\ell= - \infty}^\infty\sum_{j=0}^{N^2-1}\int_0^{\frac{1}{N}} | \alpha_u(j,l) |^2\, \dif u\\[2mm]
& =  2N\int_0^{\frac{1}{N}}\int_{\R}|f'(t)|^2\, \dif t \dif u+ \frac{8\pi^2}{N^2}\int_\R |f(t)|^2\dif t\\[2mm]
&= 2\int_{\R}|f'(t)|^2\dif t+ \frac{8\pi^2}{N^2}\int_\R |f(t)|^2\dif t.
\end{align*}
\end{proof}

We are now ready to show that the classical Balian-Low theorem    (Theorem A) follows from our finite Balian-Low theorem (Theorem \ref{finite blt}). 
\begin{proof}[Proof of the Classical Balian-Low Theorem]
Let $g\in L^2(\R)$ be such that the Gabor system $G(g)$ is a Riesz basis with lower and upper Riesz basis bounds $A$ and $B$, respectively. For all integers $N\geq 5$, $d = N^2$, and $u,v\in [0,1/N]^2$, we consider the finite dimensional signal $S_NP_N g_{(u,v)}$.   By Corollary \ref{everyoneriesz},  for almost every $u,v\in [0,1/N]^2$, this is a basis in $\ell_2^{d}$ with  Riesz basis bounds $\widetilde A$, $\widetilde B$ satisfying $A \leq \widetilde A$ and $\widetilde B \leq B$. By Theorem \ref{finite blt}, (see also Remark \ref{finite blt riesz}) we have
\[
c\log N\leq N\sum_{j=1}^{d-1}|\Delta_{(j)} S_NP_Ng_{u,v}(j)|^2+N\sum_{k=1}^{d-1}|\Delta_{(k)} \mathcal{F}_dS_NP_Ng_{u,v}(k)|^2.
\]
Integrating both parts with respect to $(u,v)$ over the set $[0, {1}/{N}]^2$, and applying Proposition \ref{proposition:trump2} (ii), we get
\[
c\log N\leq  N^2\|\Delta_{(j)} S_NP_Ng_{u,v}(j)\|^2_{(L^2[0,1/N])^d}+N^2 \|\Delta_{(k)} S_NP_N\mathcal{F}g_{u,v}(k)\|^2_{(L^2[0,1/N])^d}.
\]
By Lemma \ref{theshitlemmawhichihatehatehate}, we obtain
\begin{align*}
c\log N\leq 2\int_{\R}| g'(t)|^2\dif t+ &\frac{8\pi^2}{N^2}\int_\R| g(t)|^2\dif t \\[2mm]
&+2\int_{\R}|(\mathcal{F}g)'(t)|^2\dif t+ \frac{8\pi^2}{N^2}\int_\R|\mathcal{F} g(t)|^2\dif t.
\end{align*}
Finally,  letting $N$ tend to infinity, the result follows.
\end{proof}

\subsection{A quantiative Balian-Low theorem}

Discrete and continuous tail estimates are related by the following lemma.

 \begin{lemma}\label{thenotsoshitlemma}
Let $f\in L^2(\R)$ and $Q,N\in \N$ be such that $Q\leq N$. Denote $F_{(u,v)}=S_NP_Nf_{(u,v)}$. Then,
\[
N\sum_{j=QN}^{d-1}\|F_{(u,v)}(j)\|^2_{L^2[0,\frac{1}{N}]}\leq \int_{\R\backslash [0,Q]}|f(t)|^2\dif t.
\]
\end{lemma}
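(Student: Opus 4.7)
The plan is to compute the left-hand side directly by unpacking the definition of $F_{(u,v)}(j)$ and exploiting the orthogonality of the exponentials $\{\sqrt{N}\,\e^{2\pi \im v \ell N}\}_{\ell \in \Z}$ on $[0,1/N]$, just as in the proof of the earlier $L^2$-convergence lemma. Specifically, I would write
\[
F_{(u,v)}(j) = \sum_{\ell \in \Z} \e^{2\pi \im v (j/N + \ell N)} f(j/N + u + \ell N) = \e^{2\pi \im v j/N}\sum_{\ell \in \Z} \e^{2\pi \im v \ell N} f(j/N + u + \ell N),
\]
and then integrate the modulus squared over $v \in [0,1/N]$, which by the Parseval/orthogonality identity produces $\frac{1}{N}\sum_{\ell}|f(j/N+u+\ell N)|^2$.

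Next I would integrate over $u \in [0,1/N]$ and change variables $t = j/N + u + \ell N$ on each piece, which turns $\int_0^{1/N}|f(j/N + u + \ell N)|^2\,du$ into $\int_{j/N+\ell N}^{(j+1)/N + \ell N}|f(t)|^2\,dt$. Summing over $j$ from $QN$ to $d-1 = N^2 - 1$ and over $\ell \in \Z$ collapses the integrals into
\[
N\sum_{j=QN}^{d-1}\|F_{(u,v)}(j)\|^2_{L^2([0,1/N]^2)} = \sum_{\ell \in \Z} \int_{Q + \ell N}^{N + \ell N} |f(t)|^2\,dt.
\]

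The final step, and the one requiring a small geometric observation rather than computation, is to check that the union $\bigcup_{\ell \in \Z}[Q+\ell N, N + \ell N]$ is disjoint from $[0,Q]$ (using $Q \leq N$): indeed, on each period $[\ell N, (\ell+1)N]$, the complementary interval has length $Q$, and for $\ell = 0$ this complementary interval is precisely $[0,Q]$. Hence the union is contained in $\R \setminus [0,Q]$, so
\[
\sum_{\ell \in \Z}\int_{Q+\ell N}^{N+\ell N} |f(t)|^2\,dt \leq \int_{\R \setminus [0,Q]}|f(t)|^2\,dt,
\]
which yields the claim. There is no real obstacle here, since both Fubini and the orthogonality step are justified by the non-negativity of the integrand; the only point to handle cleanly is making sure the constraint $Q \leq N$ is used exactly where the complementary interval lands inside a single period (otherwise the sets would overlap $[0,Q]$).
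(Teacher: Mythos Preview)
Your proposal is correct and is essentially the same argument as the paper's: both expand $F_{(u,v)}(j)$, use Parseval for the orthonormal system $\{\sqrt{N}\,\e^{2\pi\im v\ell N}\}_{\ell\in\Z}$ on $[0,1/N]$ to collapse the $v$-integral, change variables in $u$, and then observe that $\bigcup_{\ell}[Q+\ell N,\,N+\ell N]\subset\R\setminus[0,Q]$. The only cosmetic difference is that the paper writes the last sum as $\sum_\ell\int_Q^N|f(u+\ell N)|^2\,\dif u$ before shifting, whereas you shift first.
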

\begin{proof}
We have,
\[
\begin{aligned}
N\sum_{j=QN}^{d-1}\|F_{(u,v)}(j)\|^2_{L^2[0,\frac{1}{N}]}&=N\sum_{j=QN}^{d-1}\iint_{[0,\frac{1}{N}]^2}\Big|\sum_{\ell=-\infty}^{\infty}f\Big(u+\frac{j}{N}+ \ell N\Big)\e^{2\pi \im  v\ell N}\Big|^2\, \dif u \dif v\\[2mm]
&=\sum_{j=QN}^{d-1}\int_{[0,\frac{1}{N}]}\sum_{\ell=-\infty}^{\infty}\Big|f\Big(u+\frac{j}{N}+ \ell N\Big)\Big|^2\, \dif u\\[2mm]
&=\sum_{\ell=-\infty}^{\infty}\int_{Q}^N \Big|f(u+ \ell N)\Big|^2\, \dif u\\[2mm]
&\leq \int_{\R\backslash [0,Q]}|f(t)|^2\dif t.
\end{aligned}
\]
\end{proof}
We are now ready to show that the Quantitative Balian-Low theorem (Theorem B) follows from Theorem \ref{finite quantitative BLT 1} (or rather, the more general Theorem \ref{finite quantitative BLT 1 -rb}).  
\begin{proof}[Proof of the Quantitative Balian-Low Theorem]
Let $g\in L^2(\R)$ be such that $G(g)$ is a Riesz basis with lower and upper Riesz basis bounds $A$ and $B$, respectively, and let {$Q,R$ be positive integers}. Let $N\geq   {200} \max\{Q,R\}\cdot\sqrt{B/A}$ and set $d = N^2$. For fixed $u,v\in\R$, consider the finite dimensional signal $S_NP_Ng_{(u,v)}$.   By Corollary \ref{everyoneriesz}, for almost every $u,v\in [0,1/N]^2$, this is a basis in $\ell_2^{d}$ with  Riesz basis bounds $\widetilde A$, $\widetilde B$ satisfying $A \leq \widetilde A$ and $\widetilde B \leq B$. By Theorem \ref{finite quantitative BLT 1 -rb} we have
\[
\frac{C}{QR}\leq \frac{1}{N}\sum_{j=2QN}^{d-1}| S_NP_Ng_{u,v}(j)|^2+\frac{1}{N}\sum_{j=2RN}^{d-1}|\mathcal{F}_dS_NP_Ng_{u,v}(j)|^2.
\]
Integrating both {terms} with respect to $(u,v)$ over the set $[0,{1}/{N}]^2$, and applying Proposition \ref{proposition:trump2} (ii), we get
\[
\frac{C}{QR}\leq N\sum_{j=2QN}^{d-1}\|S_NP_Ng_{u,v}(j)\|_{(L^2[0,1])^d}^2+N\sum_{j=2RN}^{d-1}\|S_NP_N\mathcal{F}g_{u,v}(j)\|_{(L^2[0,1])^d}^2.
\]
By Lemma \ref{thenotsoshitlemma} we obtain,
\[
\frac{C}{QR}\leq \int_{\R \backslash [0,2Q]}|g(t)|^2\dif t+\int_{\R \backslash [0,2R]}|\mathcal{F}g(\xi)|^2 \dif t.
\]
The result now follows by applying an appropriate translation and modulation to $g$ (note that a translations and modulations of $g$ preserve the Riesz basis properties of $G(g)$).
\end{proof}

\section{Balian-Low theorems for Gabor systems over rectangles} \label{xcom7}

In this   section, we turn to the proofs of  theorems \ref{finite blt-rec} and \ref{finite quantitative BLT 1-rec}, which consider finite Gabor systems over rectangular lattices. Note that, as the proofs are very similar to those of theorems \ref{finite blt} and \ref{finite quantitative BLT 1}, respectively, we only provide the necessary preliminaries and  an outline of the main ideas.  For easy reference, we give the subsections below   titles that match those of  the discussion in the square lattice case.

\subsection{The finite Zak transforms over rectangles.}

Throughout this section, we let $M,N \in \N$ and denote $d=MN$. Recall from the introduction that we let $\ell_2^{(M,N)}$ denote the space of functions on the group $\Z_d = \Z/d\Z$ with norm
\begin{equation} \label{normal-rec}
	\norm{a}_{(M,N)}^2  = \frac{1}{M} \sum_{j=0}^{d-1} |a(j)|^2, \qquad a = \{ a(j) \}_{j=0}^{d-1}.
\end{equation}
With   the   normalization
\begin{equation*}
	\mathcal{F}_{(M,N)}a(k) =  \frac{1}{M} \sum_{j=0}^{d-1} a(j) \e^{-2\pi \im \frac{jk}{d}},
\end{equation*}
it is readily checked that the Fourier transform is a unitary operator from $\ell_2^{(M,N)}$ to $\ell_2^{(N,M)}$.

We   use the periodic convolution, defined for $a,b \in \ell_2^{(M,N)}$,
  by
\begin{equation*}
	(a \ast b)(k) = \frac{1}{M} \sum_{j=0}^{d-1} a(k-j) b(j).
\end{equation*}
This yields  the convolution relation  $\mathcal{F}_{(M,N)}(a \ast b) = \mathcal{F}_{(M,N)} a \cdot \mathcal{F}_{(M,N)}b$. 

We will make use of the space
$\ell_2([0,M-1]\times [0,N-1])$, with the normalization
\[
\frac{1}{d}\sum_{m=0}^{M-1}\sum_{n=0}^{N-1}|W(m,n)|^2.
\]


The finite   Zak transform for $a\in \ell_2^{(M,N)}$ is given by
	\begin{equation*}
		Z_{(M,N)}(a) \, (m,n)  =  \sum_{j=0}^{N-1}  a(m-Mj) \e^{2\pi \im \frac{jn}{N}}, \qquad (m,n) \in \Z_d^2.
	\end{equation*}
Note that with this definition, $Z_{(M,N)} (a)$ is well-defined as a function on $\Z_d^2$ (that is, it is $d$-periodic separately in each variable).
%

The basic properties of $Z_{(M,N)}$ are stated in the following lemma  (compare with Lemma \ref{ztprop...}).
\begin{lemma} \hspace{0mm} \label{lemma:basics on finite Zak-rec} Let $M,N\in \N$ and $d=MN$.   For $a \in  \ell_2^{(M,N)}$,  the following hold.
\begin{itemize}
	\item[(i)] The function $Z_{(M,N)}(a)$   is $(M,N)$-quasi-periodic on $\Z_d^2$ in the sense that
\begin{equation} \label{cond:qp2-rec}
\begin{aligned}
	Z_{(M,N)}(a)(m + M,n) &= \e^{2\pi \im \frac{n}{N}}Z_{(M,N)}(a)(m,n),  \\[2mm]
	Z_{(M,N)}(a)(m,n+N) &=  Z_{(M,N)} (a)(m,n).
\end{aligned}
\end{equation}
 In particular, the finite Zak transform is determined by its values on the set $\Z^2 \cap ([0,M-1]\times [0,N-1])$.
	\item[(ii)] The   transform $Z_{(M,N)}$ is a unitary operator from $\ell_2^{(M,N)}$ onto $\ell_2([0,M-1]\times [0,N-1])$.
	\item[(iii)]  The finite Zak transform and the finite Fourier transform satisfy the relation
\begin{equation} \label{Zak and Fourier-rec}
	Z_{(N,M)} (\mathcal{F}_{(M,N)}a)(n,m) =   \, \e^{2\pi \im \frac{mn}{d}} Z_{(M,N)}(a) (-m,n).
\end{equation}
	\item[(iv)] For $a,\phi \in  \ell_2^{(M,N)}$ the finite Zak transform satisfies the convolution relation
\begin{equation*}
	Z_{(M,N)} (a \ast \phi) = Z_{(M,N)}(a) \ast_1 \phi,
\end{equation*}
where the subscript of $\ast_1$ indicates that the  convolution is taken with respect to the first variable of the finite Zak transform.
\end{itemize}
\end{lemma}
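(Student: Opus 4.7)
The plan is to establish each part by direct calculation, in close parallel with the square lattice case (Lemma \ref{lemma:basics on finite Zak}), exploiting the $d$-periodicity of $a$ on $\Z_d$ and basic orthogonality of exponential sums. The non-trivial new feature in the rectangular setting is that the Fourier pair $(M,N)\leftrightarrow(N,M)$ swaps the roles of the two parameters, so one must carefully track which normalization appears where; this is the only place where the bookkeeping really differs from the square case.

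For (i), I would compute $Z_{(M,N)}(a)(m+M,n)$ by shifting the summation index $j\mapsto j-1$, producing an extra factor $e^{2\pi \im n/N}$ and a ``missing'' boundary term at $j=N-1$ compared to a ``spurious'' one at $j=-1$; the two agree by the $d$-periodicity of $a$ (since $a(m+M-dN)=a(m+M)$ after collapsing the shift with $e^{2\pi\im(N-1)n/N}=e^{-2\pi\im n/N}$). The second identity in \eqref{cond:qp2-rec} is immediate since $e^{2\pi\im j(n+N)/N}=e^{2\pi\im jn/N}$. For (ii), I would square out $\frac{1}{d}\sum_{m,n}|Z_{(M,N)}(a)(m,n)|^2$, use the orthogonality relation $\sum_{n=0}^{N-1} e^{2\pi\im(j-k)n/N}=N\delta_{j,k}$ to collapse one sum, and observe that as $(m,j)$ runs over $[0,M-1]\cap\Z\times[0,N-1]\cap\Z$ the quantity $m-Mj$ traverses a full residue system modulo $d=MN$; the resulting expression equals $\frac{1}{M}\sum_{l=0}^{d-1}|a(l)|^2=\|a\|_{(M,N)}^2$, proving $Z_{(M,N)}$ is an isometry, hence unitary by dimension count.

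For (iii), I would start from the right-hand side of
\begin{equation*}
Z_{(N,M)}(\mathcal{F}_{(M,N)}a)(n,m)=\sum_{j=0}^{M-1}\mathcal{F}_{(M,N)}a(n-Nj)\,e^{2\pi\im\frac{jm}{M}},
\end{equation*}
insert the definition of $\mathcal{F}_{(M,N)}a$, and use $e^{2\pi\im\ell Nj/d}=e^{2\pi\im\ell j/M}$ to separate the inner sum. The geometric series in $j$ enforces $\ell\equiv-m\pmod M$, reducing the expression to a sum over $\ell=-m+kM$ for $k=0,\dots,N-1$. After pulling out $e^{2\pi\im mn/d}$ and reindexing $k\mapsto -k$ modulo $N$ (legal by $d$-periodicity of $a$), the sum matches $e^{2\pi\im mn/d}Z_{(M,N)}(a)(-m,n)$. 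This is the step with the most bookkeeping, because the two Zak transforms live on different normalized spaces; once one writes out the indices carefully the computation is routine.

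For (iv), I would plug the definition of the periodic convolution into $Z_{(M,N)}(a\ast\phi)(m,n)$, interchange the summation order, and recognize the inner sum over $j$ as $Z_{(M,N)}(a)(m-l,n)$; then
\begin{equation*}
Z_{(M,N)}(a\ast\phi)(m,n)=\frac{1}{M}\sum_{l=0}^{d-1}\phi(l)\,Z_{(M,N)}(a)(m-l,n),
\end{equation*}
which is exactly $(Z_{(M,N)}(a)\ast_1\phi)(m,n)$ with the convention that $\ast_1$ denotes convolution in the first (length-$d$, weight-$1/M$) variable. The main obstacle, modest as it is, will be part (iii): keeping straight which of $M$ and $N$ appears in which exponential and in which normalization, since the rectangular geometry breaks the symmetry that made the square case visually simpler.
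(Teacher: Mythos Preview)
Your proposal is correct and is the natural direct verification; each of (i)--(iv) goes through exactly as you describe, including the reindexing $k\mapsto -k\pmod N$ in (iii). The paper does not supply a proof here: it simply states the lemma with a pointer to the square-lattice analogue (Lemma \ref{lemma:basics on finite Zak}), which in turn is proved only by citing \cite{auslander_gertner_tolimieri1992} for (i)--(iii) and saying (iv) is immediate from the definitions. So there is no alternative approach in the paper to compare against---your calculation is the intended (and only obvious) route.
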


An extension of part (ii) of Proposition \ref{gaboriesz} to the rectangular case reads as follows.
\begin{proposition} \label{gaboriesz-rec} 
 Let $M,N\in \N$ and $b\in \ell_2^{(M,N)}$. Then the Gabor system $G_{(M,N)}(b)$,   defined in \eqref{discrete gabor rect}, is a basis in $\ell_2^{(M,N)}$ with Riesz basis bounds $A$ and $B$, if and only if $A\leq |Z_{(M,N)}(b)(m,n)|^2\leq B$ for all $(m,n) \in \Z^2\cap([0,M-1]\times [0,N-1])$.
\end{proposition}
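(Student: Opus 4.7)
The plan is to diagonalize the finite Gabor system via $Z_{(M,N)}$, exactly as in the proof of Proposition \ref{gaboriesz}(ii) from \cite{auslander_gertner_tolimieri1992}, with careful bookkeeping of the rectangular normalizations. The first step is to establish the key commutation identity: writing a generic index $(k,\ell) \in M\Z_d \times N\Z_d$ as $(k,\ell) = (Mk',N\ell')$ with $k'\in\{0,\dots,N-1\}$ and $\ell'\in\{0,\dots,M-1\}$, and setting $\phi_{k,\ell}(j) := \e^{2\pi\im \ell j/d} b(j-k)$, I would verify by direct computation that
\begin{equation*}
Z_{(M,N)}(\phi_{k,\ell})(m,n) \;=\; \e^{2\pi\im(\ell' m/M - k' n/N)} Z_{(M,N)}(b)(m,n).
\end{equation*}
The translation part follows by substituting $j \mapsto j+k'$ in the defining sum of $Z_{(M,N)}$ and using the $(M,N)$-quasi-periodicity from Lemma \ref{lemma:basics on finite Zak-rec}(i); the modulation part reduces to the algebraic identity $\e^{2\pi\im N\ell'(m-Mj)/d} = \e^{2\pi\im \ell' m/M}$, which is valid since $N M \ell' j/d = \ell' j \in \Z$.

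Once this identity is in hand, for any coefficient sequence $\{c_{k',\ell'}\}$ the Zak transform converts $\sum c_{k',\ell'} \phi_{Mk',N\ell'}$ into the product $P \cdot Z_{(M,N)}(b)$, where
\begin{equation*}
P(m,n) = \sum_{k'=0}^{N-1} \sum_{\ell'=0}^{M-1} c_{k',\ell'} \e^{2\pi\im(\ell' m/M - k' n/N)}.
\end{equation*}
A short orthogonality check (summing geometric series over $m$ and $n$) shows that the $d$ exponentials appearing above form an orthonormal basis of $\ell_2([0,M-1]\times[0,N-1])$ with the normalization fixed in Section \ref{xcom7}. Combining this with Parseval and the unitarity of $Z_{(M,N)}$ from Lemma \ref{lemma:basics on finite Zak-rec}(ii), I obtain
\begin{equation*}
\Bignorm{\sum_{k',\ell'} c_{k',\ell'} \phi_{Mk',N\ell'}}_{(M,N)}^{2} = \frac{1}{d}\sum_{m=0}^{M-1}\sum_{n=0}^{N-1} |P(m,n)|^{2}\, |Z_{(M,N)}(b)(m,n)|^{2},
\end{equation*}
while $\sum_{k',\ell'} |c_{k',\ell'}|^{2} = \frac{1}{d}\sum_{m,n} |P(m,n)|^{2}$.

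From the pair of identities above, the two-sided pointwise bound $A \leq |Z_{(M,N)}(b)(m,n)|^{2} \leq B$ on $\Z^{2}\cap([0,M-1]\times[0,N-1])$ is equivalent to the Riesz basis inequality \eqref{rb} for $G_{(M,N)}(b)$ with the same constants $A$ and $B$. To close the equivalence with the basis property, I would observe that $|G_{(M,N)}(b)| = MN = d = \dim \ell_{2}^{(M,N)}$, so that a positive lower Riesz bound forces linear independence and hence spanning. Conversely, if $|Z_{(M,N)}(b)|$ vanishes at some point of $\Z^{2}\cap([0,M-1]\times[0,N-1])$, one can select $P$ supported at that point to produce a nontrivial null combination, and if the pointwise bounds fail, specializing $P$ to a single exponential violates the corresponding Riesz inequality. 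The main (though still routine) obstacle is keeping the three different normalizations --- of $\|\cdot\|_{(M,N)}$, of the Zak transform on $\ell_2([0,M-1]\times[0,N-1])$, and of the exponential basis --- consistent so that the factor of $d = MN$ cancels cleanly; once the commutation identity of the first step is set up correctly, the rest is linear algebra.
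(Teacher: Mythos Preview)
Your argument is correct and is exactly the standard diagonalization via the Zak transform: the commutation identity, the orthonormality of the $d$ exponentials in $\ell_2([0,M-1]\times[0,N-1])$, and the unitarity of $Z_{(M,N)}$ combine to give the two displayed identities, from which the equivalence with the Riesz bounds follows by choosing $P$ to be a point mass at an arbitrary $(m_0,n_0)$. Note that the paper does not actually supply a proof of this proposition; it is stated as the rectangular analogue of Proposition~\ref{gaboriesz}(ii), which in turn is quoted from \cite[Theorem~6]{auslander_gertner_tolimieri1992}, so your write-up is precisely the routine extension the paper leaves implicit.
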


\subsection{Relating continuous and finite signals in the rectangular case}

For $M, N \in \mathbb{N}$, $d=MN$, $f \in \mathcal{S}(\R)$, and an $N$-periodic function $h$ over $\R$, recall the notations
\begin{equation*}
	P_{N} {f}(x)  = \sum_{\ell= - \infty}^\infty f(x+ \ell N)
\qquad and
\qquad
	S_{M} \, h  = \{ h(j/M) \}_{j=0}^{d-1}.
\end{equation*}

The following result extends  proposition \ref{proposition:trump1}.

\begin{proposition}\label{proposition:trump1-rec}
	\hspace{2mm}
For $f \in \mathcal{S}(\R)$,   the following hold.
	\begin{itemize}
\item[(i)]   For every $M,N\in\N$ and $(m,n)\in\Z^2$, we have
		\begin{equation} \label{potus-rec}
			Z_{(M,N)} (S_{M} {P_{N} f})(m,n) = Zf(m/M, n/N).
		\end{equation}
		\item[(ii)] For every $M,N\in\N$, we have
		\begin{equation} \label{eq:*-rec}
			  \mathcal{F}_{(M,N)}S_{M} P_{N} {f} = S_{N} P_{M} \mathcal{F} f.
		\end{equation}
	\end{itemize}
\end{proposition}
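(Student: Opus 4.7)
The plan is to mimic the proof of Proposition \ref{proposition:trump1}, using a direct computation for part (i), and then bootstrapping to part (ii) via the Fourier--Zak intertwining relations on both the continuous and finite sides.

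For part (i), I would begin by unraveling the definitions. Fix $f \in \mathcal{S}(\R)$. By definition of $Z_{(M,N)}$, and then of $S_M P_N f$,
\begin{align*}
Z_{(M,N)}(S_M P_N f)(m,n)
&= \sum_{j=0}^{N-1} (S_M P_N f)(m - Mj)\,\e^{2\pi \im \frac{jn}{N}} \\[2mm]
&= \sum_{j=0}^{N-1} \Big( \sum_{\ell=-\infty}^{\infty} f\Big(\frac{m}{M} - j + \ell N\Big) \Big) \e^{2\pi \im \frac{jn}{N}}.
\end{align*}
The key observation is that as $j$ ranges over $[0,N-1] \cap \Z$ and $\ell$ ranges over $\Z$, the quantity $k := j - \ell N$ ranges bijectively over $\Z$, and moreover $\e^{2\pi \im jn/N} = \e^{2\pi \im kn/N}$ since $n \in \Z$. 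Reindexing the double sum by $k$ therefore collapses it into $\sum_{k \in \Z} f(m/M - k) \e^{2\pi \im (n/N)k} = Zf(m/M,n/N)$, which is exactly the desired identity.

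For part (ii), I would combine part (i) applied both to $f$ and to $\mathcal{F}f$, together with the two Fourier--Zak intertwining relations. First, from part (i) applied to $f$ (on the input space) we get
\[
Z_{(M,N)}(S_M P_N f)(-m,n) = Zf(-m/M,n/N),
\]
while applying part (i) to $\mathcal{F}f$ (but with the roles of $M,N$ swapped, on the output space $\ell_2^{(N,M)}$) gives
\[
Z_{(N,M)}(S_N P_M \mathcal{F}f)(n,m) = Z(\mathcal{F}f)(n/N,m/M).
\]
The continuous intertwining relation in Lemma \ref{ztprop...}(iii) rewrites the right-hand side as $\e^{2\pi \im mn/d} Zf(-m/M,n/N)$, while the finite intertwining relation \eqref{Zak and Fourier-rec} in Lemma \ref{lemma:basics on finite Zak-rec}(iii) gives
\[
Z_{(N,M)}(\mathcal{F}_{(M,N)} S_M P_N f)(n,m) = \e^{2\pi \im mn/d} Z_{(M,N)}(S_M P_N f)(-m,n).
\]
Combining the last three displays, both $\mathcal{F}_{(M,N)} S_M P_N f$ and $S_N P_M \mathcal{F}f$ have the same image under $Z_{(N,M)}$. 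Since $Z_{(N,M)}$ is a unitary (in particular injective) operator from $\ell_2^{(N,M)}$ onto $\ell_2([0,N-1]\times [0,M-1])$ by Lemma \ref{lemma:basics on finite Zak-rec}(ii), identity \eqref{eq:*-rec} follows.

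I do not expect a genuine obstacle here: the argument is essentially two Poisson-type reindexings together with the algebra of the Zak--Fourier commutation. The only point that requires minor care is keeping track of the swap $(M,N) \leftrightarrow (N,M)$ between input and output spaces and making sure the Fourier variable of the finite Zak transform lines up correctly with the second variable in $Zf$; this is exactly what the relation \eqref{Zak and Fourier-rec} is designed to handle. As in Remark \ref{hasta la vista}, the proof goes through without change for any $f \in L^2(\R)$ with $\sup_t |t^2 f(t)| < \infty$ and $\sup_\xi |\xi^2 \hat f(\xi)| < \infty$, and the extension to all of $L^2(\R)$ then proceeds exactly as in the square case treated in Subsection \ref{revisited}.
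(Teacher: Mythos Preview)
Your proof is correct and follows exactly the approach the paper intends: the paper does not give a separate proof for this rectangular version, referring instead to the square case (Proposition~\ref{proposition:trump1}), and your argument is precisely the natural adaptation of that proof, with the same reindexing for part~(i) and the same use of the continuous and finite Zak--Fourier relations (Lemmas~\ref{ztprop...}(iii) and~\ref{lemma:basics on finite Zak-rec}(iii)) together with injectivity of $Z_{(N,M)}$ for part~(ii).
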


 We now point out that the estimate of   Lemma \ref{derder}  can be extended to the rectangular setting. Indeed, for
 $M,N\in\N$, $d=MN$ and $f\in \mathcal{S}(\R)$,  denote $F=S_{M}P_{N}f$. Then,
\[
\sum_{j=0}^{d-1}|\Delta F_j|  \leq \int_{\R}|f'(x)|\dif x.
\]

\subsection{Regularity of the finite Zak transforms over rectangles}

Lemma \ref{lemma:jump 1} is already   formulated in a rectangular version. One can obtain a similar analog of Lemma \ref{douche} with the notations
\begin{equation} \label{sigmaomega-rec}
		\sigma_s = \Big[ \frac{s M}{K} \Big], \quad s \in [0,K]\cap \Z,\qquad \omega_t = \Big[ \frac{t N}{L} \Big], \quad t \in [0,L]\cap \Z,
\end{equation}
for integers  $K \in [2,M]$ and $L \in [2,N]$. Note that, similar to the square case, we have $\sigma_K=M$ and $\omega_L=N$.
With this we get the following extension of Corollary \ref{lem:infidel1}.
\begin{corollary} \label{lem:infidel1-rec}
Fix an integer $N_0 \geq 5$ and a constant $A>0$. Let
$$\delta=2\sqrt{A}\sin\Big( \pi \Big(\frac{1}{4}-\frac{1}{N_0}\Big)\Big).$$
For any integers
	$M,N \geq N_0$, $K\in [2,M]$, and $L\in [2,N]$, the following holds (with $d=MN$).
	Let $W$ be an $(M,N)$-quasi periodic function over the lattice $\Z_d^2$ satisfying $A \leq \abs{W(m,n)}^2$, for all $(m,n)\in\Z^2_d$.
	Then,  for every $(u, v) \in \Z_d^2$ there exists at least one point $(s,t)\in ([0,K-1]  \times [0,L-1]) \cap \Z^2$ such that
	\begin{align}
			&\abs{\Delta_{(s)} W(u+\sigma_{s}  , v+\omega_t)} \geq \delta, \quad \text{or} \label{baloney10-rec} \\[2mm]
			&\abs{\Gamma_{(t)} W(u+\sigma_s, v+\omega_{t}) } \geq \delta,   \label{baloney11-rec}
	\end{align}
where  $\sigma_s, \omega_t$ are   defined in (\ref{sigmaomega-rec}).
	\end{corollary}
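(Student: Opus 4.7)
The plan is to first establish a rectangular analog of Lemma \ref{douche} and then apply the same trigonometric reduction used in the proof of Corollary \ref{lem:infidel1}: argument jumps of a function bounded below in modulus translate into quantitative function jumps. Note that the underlying Lemma \ref{lemma:jump 1} is already rectangular (it takes arbitrary $K, L \geq 2$), so only the bridge from argument jumps on a true grid to argument jumps on the discretized sublattice given by $\sigma_s = [sM/K]$, $\omega_t = [tN/L]$ needs reworking.

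For the rectangular analog of Lemma \ref{douche}, I would fix $M, N \geq 5$, $K \in [2, M]$, $L \in [2, N]$, and let $W$ be $(M,N)$-quasi-periodic up to a unimodular constant $\eta = \e^{2\pi \im \gamma}$, meaning
\begin{equation*}
W(m+M, n) = \eta\, \e^{2\pi \im n/N} W(m,n), \qquad W(m, n+N) = W(m,n).
\end{equation*}
Writing $H$ for a branch of $\arg W$ and setting $h_{s,t} = H(\sigma_s, \omega_t)$, define
\begin{equation*}
\Phi(s,t) = \begin{cases} h_{s,t}, & s \in [0, K-1] \cap \Z, \\[1mm] h_{0,t} + \gamma + t/L, & s = K. \end{cases}
\end{equation*}
Since $\omega_L = N$ and $W$ is $N$-periodic in the second variable, $\Phi(s, L) = \Phi(s, 0) \pmod 1$, so $\Phi$ satisfies the hypotheses of Lemma \ref{lemma:jump 1} on $([0,K] \times [0,L]) \cap \Z^2$. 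That lemma yields a point $(s,t) \in ([0,K-1] \times [0,L-1]) \cap \Z^2$ where $\Phi$ jumps by at least $1/4$ in one direction. Transferring the jump back to $h$ is immediate except at $s = K-1$ in the horizontal direction, where $\sigma_K = M$ and the quasi-periodicity give $h_{K,t} = \Phi(K,t) + (\omega_t/N - t/L)$; since $L \leq N$, the resulting error is at most $1/N$, leaving a jump of at least $1/4 - 1/N$.

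Given this rectangular version of Lemma \ref{douche}, the corollary follows as in the square case. For any $(u,v) \in \Z_d^2$, the translate $(m,n) \mapsto W(m+u, n+v)$ is $(M,N)$-quasi-periodic up to a unimodular constant ($\eta$ picks up a factor $\e^{2\pi \im v/N}$), so the rectangular Lemma \ref{douche} produces a point $(s,t) \in ([0,K-1] \times [0,L-1]) \cap \Z^2$ at which the argument of $W(\cdot + u, \cdot + v)$ jumps by at least $1/4 - 1/N \geq 1/4 - 1/N_0$ in the $s$- or $t$-direction. Combined with $|W|^2 \geq A$, elementary trigonometry, namely $|z_1 - z_2| \geq 2\sqrt{A}\sin(\pi\theta)$ whenever $|z_1|, |z_2| \geq \sqrt{A}$ and the arguments of $z_1, z_2$ differ by $\theta \in [0, 1/2]$ modulo $1$, converts this into a jump of $W$ of size at least $\delta = 2\sqrt{A}\sin(\pi(1/4 - 1/N_0))$.

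The main obstacle is purely bookkeeping, centered on the extension of Lemma \ref{douche}: verifying that the sole nontrivial error in the argument jump comes from the vertical discretization $\omega_t = [tN/L]$. The horizontal discretization is absorbed without loss because $\sigma_K = M$ coincides exactly with the horizontal quasi-period, which is why the resulting bound $1/4 - 1/N$ depends on the vertical dimension $N$ alone and is therefore compatible with the uniform estimate $1/4 - 1/N_0$ stated in the corollary.
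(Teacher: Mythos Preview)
Your proposal is correct and follows essentially the same approach as the paper, which only sketches the argument by indicating that the rectangular analog of Lemma~\ref{douche} goes through with $\sigma_s=[sM/K]$, $\omega_t=[tN/L]$ and then invokes Corollary~\ref{lem:infidel1}. You have accurately filled in the details the paper omits, including the key observation that the only discretization error enters through $\omega_t/N - t/L$ and is bounded by $1/N$, so the bound $1/4-1/N_0$ holds uniformly.
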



\subsection{A   Balian-Low type theorem in finite dimensions over rectangles}

Fix $M,N\in\N$ and set $d=MN$. For $b\in \ell^{(M,N)}_2$, denote
\[
\alpha(b,M,N):=M\sum_{j=0}^{d-1} \abs{\Delta b(j)}^2  +  N\sum_{k=0}^{d-1} \abs{\Delta \mathcal{F}_{(M,N)}{b}(k)}^2,
\]
and
\[
\beta(b,M,N):=\frac{M}{N}\sum_{m=0}^{M-1} \sum_{n=0}^{N-1} \abs{\Delta Z_{(M,N)}(b)\,(m,n)}^2 + \frac{N}{M}\sum_{m=0}^{M-1} \sum_{n=0}^{N-1}  \abs{\Gamma Z_{(M,N)} (b)\,(m,n)}^2.
\]
Given $A,B>0$ set
\[
\alpha_{A,B}(M,N)=\inf\{\alpha(b,M,N)\}\qquad\textrm{ and }\qquad\beta_{A,B}(M,N)=\inf\{\beta(b,M,N)\},
\]
where both infimums are taken over all   $b\in \ell^{(M,N)}_2$  for which the system $G_{(M,N)}(b)$ is a basis with lower and upper Riesz basis bounds $A$ and $B$, respectively.
The analog of Proposition \ref{prop:alpha and beta} for the rectangular case gives the inequalities
	\begin{equation}\label{alphabeta-rec}
		 \frac{1}{2}\beta_{A,B}(M,N) - 8\pi^2 B   \leq \alpha_{A,B}(M,N) \leq  2\beta_{A,B}(M,N) +  8\pi^2 B.
	\end{equation}
It follows that Theorem \ref{finite blt-rec}  is a consequence of the following result.
\begin{theorem}\label{finite blt with betta-rec}
	There exist constants $c,C>0$ so that,  for all integers $M,N \geq 2$, we have
\begin{equation}\label{thiswillneverend}
c \log \min\{M,N\}\leq \beta_{A,B}(M,N)\leq  C \log \min\{M,N\}.
\end{equation}
\end{theorem}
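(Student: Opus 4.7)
The overall strategy is to adapt the square-lattice argument of Theorem \ref{finite blt with betta}, proving the two sides of \eqref{thiswillneverend} separately and using the relation \eqref{alphabeta-rec} only as a consequence at the end.

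For the upper bound $\beta_{A,B}(M,N)\leq C\log\min\{M,N\}$ I would produce an explicit generator $b\in\ell_2^{(M,N)}$ by setting $Z_{(M,N)}(b)(m,n)=G(m/M,n/N)$, where $G$ is the unimodular quasi-periodic function on $[0,1]^2$ from \cite{benedetto_czaja_gadzinski_powell2003} used in Proposition \ref{upperten}. Unimodularity combined with Proposition \ref{gaboriesz-rec} guarantees that $G_{(M,N)}(b)$ is an orthonormal basis. The estimate of $\beta(b,M,N)$ then follows the three-region partition $A_0\cup A_1\cup A_2$ used in the proof of Proposition \ref{upperten}, except now the boundary of the ``dangerous'' region $A_1$ is cut off at $m\leq \min\{M,N\}/8$, reflecting where the rescaled grid $(m/M,n/N)$ interacts with the singularity of $G$ at $(0,0)$. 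The constant and smooth-away-from-singularity regions contribute bounded amounts, and the near-singularity region produces the $\log\min\{M,N\}$ term through the familiar harmonic-type sum $\sum_{m=1}^{[\min\{M,N\}/8]}1/m$, once the rescalings $1/M$ and $1/N$ of the partial derivatives of $G$ are combined with the non-symmetric weights $M/N$ and $N/M$ in $\beta$.

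For the lower bound I would reduce to the case $M\leq N$ (the other case being symmetric by the Fourier duality $\beta(b,M,N)\leftrightarrow\beta(\mathcal{F}_{(M,N)}b,N,M)$, up to the phase corrections already handled in the square case). Fix $J$ with $2^J\leq M<2^{J+1}$, take $K_j=L_j=2^{J-j}$ at scales $j\in\{0,\ldots,J-1\}$, and use the rectangular sampling functions $\sigma_s^{(j)},\omega_t^{(j)}$ from \eqref{sigmaomega-rec}. Applying Corollary \ref{lem:infidel1-rec} and arguing inductively as in Section \ref{xcom4-1}, I would build sets $S_j$ of size $2^{j-1}$ so that the first and second coordinates collected across $\widetilde S_{J-1}=\bigcup_{j=0}^{J-1}S_j$ are pairwise distinct (this is possible because both the $u$- and $v$-ranges are at least $2^j$ at scale $j$, after excluding the $\leq 2^{j-1}$ coordinates already used). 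At each $(m,n)\in S_j$ Cauchy--Schwarz yields
\[
\delta^2 \leq \frac{2M}{K_j}\sum_{k=0}^{M-1}|\Delta W(k,n)|^2 + \frac{2N}{L_j}\sum_{\ell=0}^{N-1}|\Gamma W(m,\ell)|^2,
\]
with $W=Z_{(M,N)}(b)$. Summing over the distinct rows and columns at each scale and then over $j$, the distinctness prevents overcounting, and should produce a bound of the form $cJ\delta^2 \leq \beta(b,M,N)$; since $J+1\geq \log\min\{M,N\}/\log 2$, the lower bound follows.

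The main obstacle is the non-symmetric normalization of $\beta(b,M,N)$: the weights $M/N$ and $N/M$ do not match the Cauchy--Schwarz coefficients $2M/K_j$ and $2N/L_j$ coming from the rectangular sampling, and a naive aggregation of contributions produces an extra factor of $\max\{M,N\}/\min\{M,N\}$ that spoils the target bound. Overcoming this requires a careful balancing: either choose $K_j,L_j$ so that the physical jump lengths $M/K_j$ and $N/L_j$ coincide, or else split each jump contribution according to whether the first- or the second-coordinate difference is the one producing it and allocate it to the component of $\beta$ that absorbs it most efficiently. Once this balancing is achieved, the rest of the argument --- including the reduction from $\alpha(M,N)$ to $\beta(M,N)$ via \eqref{alphabeta-rec} --- parallels the square case exactly.
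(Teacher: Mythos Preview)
Your upper-bound plan is essentially sound and close to the paper's. The paper also samples the function $G$, carrying out the direct estimate only for $M>N$ and then handling $M<N$ via the Fourier duality $\alpha(b,N,M)=\alpha(\mathcal{F}_{(N,M)}b,M,N)$ together with \eqref{alphabeta-rec}. A direct computation in both regimes works too, but the dangerous strip is not $m\le\min\{M,N\}/8$: it is always $m/M\in(0,1/4]$, $n/N\in(0,m/M]$, so the effective $m$-range is $[\max\{1,M/N\},\,M/4]$, and it is the \emph{ratio} of the endpoints of this range that produces $\log\min\{M,N\}$.

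For the lower bound the gap you flag is real, and neither of your proposed fixes closes it. If you equalise the physical steps so that $M/K_j=N/L_j$, Cauchy--Schwarz returns equal coefficients and you arrive at
\[
J\delta^2/4\ \lesssim\ \sum_{m,n}|\Delta W|^2+\sum_{m,n}|\Gamma W|^2,
\]
but the first sum is $(N/M)$ times the first component of $\beta$, so this only yields $\beta\gtrsim(M/N)\log M$. Splitting by jump type fails for the same reason: with $K_j=L_j=2^{J-j}$ and $2^J\sim M$, the $\Gamma$-type jumps do balance (the Cauchy--Schwarz factor $N/L_j$ cancels the weight $N/M$), but a $\Delta$-type jump contributes a full row $\sum_{k=0}^{M-1}|\Delta W(k,n)|^2$, and nothing in your construction prevents every selected jump from being of $\Delta$-type. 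In that scenario you again recover only $\beta\gtrsim(M/N)\log M$.

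The paper avoids the issue by \emph{reducing to the square case} rather than rerunning the multiscale construction. For $M>N$ it sets $\sigma_s=[sM/N]$ and observes that, for each shift $k\in\{0,\ldots,[M/N]-1\}$, the map $(s,t)\mapsto W(k+\sigma_s,t)$ is \emph{exactly} $N$-quasi-periodic on an $N\times N$ grid (because $\sigma_{s+N}=\sigma_s+M$). Hence the already-proved Theorem~\ref{finite blt with betta} applies on each of the $[M/N]$ pairwise disjoint sub-grids $\mathrm{Lat}_k=\{(k+\sigma_s,t):s,t\in[0,N-1]\}$, giving $c\log N\le\sum_{s,t}(|\Delta_{(s)}W|^2+|\Gamma W|^2)$ on each. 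Cauchy--Schwarz on the coarse horizontal increment $\Delta_{(s)}$ introduces one factor $[M/N]$; summing the $\Gamma$-part over the disjoint $\mathrm{Lat}_k$ brings in a second such factor on the other side. Dividing the aggregated inequality by $[M/N]$ then turns these two factors into precisely the weights $M/N$ and $N/M$ of $\beta(b,M,N)$, and the balancing problem never arises.
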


\begin{proof}
In what follows we let $c, C$ denote different constants which may change from line to line.
	Let $b\in \ell^{(M,N)}_2$  be so that $G_{(M,N)}(b)$ is a basis with Riesz basis bounds $A$ and $B$, and put $W = Z_{(M,N)}(b)$. To obtain the lower inequality, we first consider  the case   $M>N$. Put $\sigma_s = [s M/N]$, and  consider square product sets
	\begin{equation*}
		\mathrm{Lat}_k =  \{ ( k + \sigma_s, t) :  s,t {\in [0,N-1]\cap Z} \}.
	\end{equation*}	
	Note that  similar   to \eqref{infsup}, it holds that
	\begin{equation}\label{never ever}
		\Big[ \frac{M}{N} \Big] \leq \inf \Delta \sigma_s \leq \sup \Delta \sigma_s \leq \Big[ \frac{M}{N} \Big] +1. 
	\end{equation}
	This implies that, for integers $0 \leq k < [M/N]$, the $N\times N$ squares $\mathrm{Lat_k}$ are disjoint as subsets of the $M \times N$
rectangle  $([0,M-1]  \times [0,N-1])\cap \Z^2$. Since, when restricted to  each of the  square sets
	 $\{ ( k + \sigma_s, t) :   s,t {\in [0,N]\cap Z} \},$
	the function $W$ is  N--quasi--periodic, it follows by Theorem \ref{finite blt with betta} that
	\begin{equation}\label{thiswillneverend}
		c \log N \leq \sum_{s,t=0}^{N-1} \Big( \abs{\Delta_{(s)} W(k+\sigma_s,t)}^2
		+   \abs{\Gamma_{(t)} W(k+\sigma_s,t)}^2\Big).
	\end{equation}
By \eqref{never ever}, and the Cauchy-Schwarz inequality, we have
\[
\begin{aligned}
 \sum_{s=0}^{N-1}\abs{\Delta_{(s)} W(k+\sigma_s,t)}^2&=  \sum_{s=0}^{N-1}\Bigabs{\sum_{j=k+\sigma_s}^{k+\sigma_{s+1}  -1 }\Delta_{(j)} W(j,t)}^2\\
 &\leq  \Big(\Big[\frac{M}{N}\Big] +1\Big)\sum_{s=0}^{N-1}\sum_{j=k+\sigma_s}^{k+\sigma_{s+1} -1}\abs{\Delta_{(j)} W(j,t)}^2\\
  &\leq  \Big(\Big[\frac{M}{N}\Big] +1\Big)\sum_{j=0}^{M-1}\abs{\Delta_{(j)} W(j,t)}^2,
 \end{aligned}
 \]
where the last step of the above computation follows from the fact that the summands are $M$-periodic. Plugging this into \eqref{thiswillneverend} we get,
 \[
		 c \log N \leq \Big[\frac{M}{N}\Big]\sum_{t=0}^{N-1}\sum_{j=0}^{M-1}\abs{\Delta_{(j)} W(j,t)}^2
		+  \sum_{s,t=0}^{N-1} \abs{\Gamma_{(t)} W(k+\sigma_s,t)}^2\Big..
	\]
	Since each of the  $ [{M}/{N} ]$ sets $\mathrm{Lat}_k$ are disjoint, we may sum up these inequalities  to obtain
	\begin{equation*}
	c\Big[\frac{M}{N}\Big]\log N\leq \Big[\frac{M}{N}\Big]^2\sum_{m=0}^{M-1}\sum_{n=0}^{N-1}  \abs{\Delta Z_{(M,N)} (b)\,(m,n)}^2 +\sum_{m=0}^{M-1}\sum_{n=0}^{N-1}  \abs{\Gamma Z_{(M,N)} (b)\,(m,n)}^2.
	\end{equation*}
	Dividing by $[M/N]$ on both sides, and using the inequalities $x/2 \leq [x] \leq x$ valid for $x \geq1$, it follows that
	\begin{equation} \label{rec-tam}
		c \log N \leq \beta(b,M,N).
	\end{equation}
	By repeating the same type of argument for the case $M<N$, the estimate from below in \eqref{thiswillneverend} follows.

	To obtain the upper inequality in \eqref{thiswillneverend}, we consider the same function $G(x,y)$ that was used in the proof for Theorem \ref{finite blt}, and split into two cases as above.

	We begin with the case $M>N$, where it suffices to go through essentially the same computations as in the proof of Proposition \ref{upperten},   this time considering a vector   $b \in \ell_2^{(M,N)}$ so that $Z_{(M,N)}(b)(m,n) = G(m/M,n/N)$, where $G$ is the same function used in square case. With this, we obtain the inequality
	\begin{equation} \label{banjo}
		\beta(b,M,N) \leq C \log N,
	\end{equation}
	for some constant $C>0$.

	We turn to the   case   $M<N$.    Swapping the roles of $m,M$ and $n,N$, respectively,  it follows that     the vector $b\in\ell_2^{(N,M)}$ which satisfies $Z_{(N,M)}b(n,m)=G(n/N,m/M)$  is the same as in the above case, whence  $\beta(b,N,M) \leq C \log M$. Now, since $\alpha(b,N,M)=\alpha(\mathcal{F}_{(N,M)}b,M,N)$, the relation \eqref{alphabeta-rec} implies that 
\begin{equation} \label{kazooie}
\beta(\mathcal{F}_{(N,M)}b,M,N) \leq C\beta(b,N,M) \leq C  \log M.
\end{equation}
	In combination, the inequalities \eqref{banjo} and \eqref{kazooie} yield the desired upper bound for $\beta(M,N)$.
\end{proof}

\subsection{A quantitative Balian-Low type theorem in finite dimensions over rectangles}

The following lemma is an extension of Lemma \ref{conv-lemma}.
\begin{lemma}\label{conv-lemma-rec}
Let $A, B>0$ and $M,N\geq    {200}\sqrt{B/A}$. There exist positive constants $\delta=\delta(A)$ and $C=C(A,B)$ such that   the following holds (with $d=MN$).  Given, \begin{itemize}
\item[(i)]   $\quad Q, R$ such that $1\leq Q   < (N/{16}) \cdot  \sqrt{A/B}$ and $1\leq R   < (M/{16}) \cdot  \sqrt{A/B}$,
\item[(ii)]  $\quad \phi \in \ell_2^{(M,N)}$ such that $\sum_n|\Delta\phi(n)|\leq {10} R$,
\item[(iii)]   $\quad \psi \in \ell_2^{(N,M)}$  such that $\sum_n|\Delta\psi(n)|\leq {10}   Q$,
\item[(iv)]  $\quad b\in \ell_2^{(M,N)}$ such that $A \leq |Z_{ (M,N)}(b)|^2  \leq B$,
\end{itemize}
then there exists a set $S\subset ([0,M-1]\times [0,N-1])\cap\Z^2$ of size $|S|\geq C MN/ QR$
 such that all $(u,v)\in S$ satisfy either
\begin{align}\label{conv-ineq-1-rec}
|Z_{(M,N)}(b)(u,v)-Z_{(M,N)}(b\ast \phi)(u,v)|\geq\delta, \qquad \text{or} \\[2mm]
\label{conv-ineq-2-rec}
|Z_{(N,M)}( \mathcal{F}_{(M,N)}  b) (v,u) -Z_{(N,M)}(( \mathcal{F}_{(M,N)} b)\ast \psi) (v,u)|\geq\delta.
\end{align}
\end{lemma}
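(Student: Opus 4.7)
The plan is to adapt the proof of Lemma \ref{conv-lemma} to the rectangular setting, replacing $Z_d$ by the appropriate $Z_{(M,N)}$ and $Z_{(N,M)}$ and splitting the single side length $N$ of the square case into $M$ (on the ``time'' side) and $N$ (on the ``frequency'' side).

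I would begin by taking $\delta_1 = 2\sqrt{A}\sin\pi(1/4-1/200)$ from Corollary \ref{lem:infidel1-rec} (applied with $N_0 = 200$) and choosing the smallest integers $K \in [2,M]$ and $L\in[2,N]$ with
\begin{equation*}
\frac{200\sqrt{B}\,R}{9\delta_1} \leq K \leq M \qquad \text{and} \qquad \frac{\sqrt{B}}{\delta_1}\max\Bigl\{\tfrac{200 Q}{9},\,80\pi\Bigr\} \leq L \leq N.
\end{equation*}
The hypotheses on $Q,R,M,N$ ensure that such $K,L$ exist. Setting $\sigma_s = [sM/K]$ and $\omega_t = [tN/L]$ as in (\ref{sigmaomega-rec}), and writing $\Sigma = \inf_s \Delta_{(s)}\sigma_s$ and $\Omega = \inf_t \Delta_{(t)}\omega_t$, the choices above force $\Sigma\Omega \geq C_1\, MN/(QR)$ for a constant $C_1 = C_1(A,B)$. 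The strategy, exactly as in the square case, is to produce for each $(u,v) \in [0,\Sigma-1]\times[0,\Omega-1]$ either one point of the ``direct'' lattice
\begin{equation*}
\mathrm{Lat}(u,v) = \{(u+\sigma_s,\, v+\omega_t): 0 \leq s < K,\ 0 \leq t < L\}
\end{equation*}
satisfying (\ref{conv-ineq-1-rec}), or one point of the ``flipped'' lattice
\begin{equation*}
\mathrm{Lat}^*(u,v) = \{(N-v-\omega_t,\, u+\sigma_s): 0 \leq s < K,\ 0 \leq t < L\}
\end{equation*}
satisfying (\ref{conv-ineq-2-rec}), with $\delta = \delta_1/40$. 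Disjointness of these sets for distinct $(u,v)$ then gives $|S| \geq \Sigma \Omega / 2 \geq C MN/(QR)$ with $C = C_1/2$.

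Fixing $(u,v)$, I would apply Corollary \ref{lem:infidel1-rec} to $W = Z_{(M,N)}(b)$ to obtain $(s,t)$ for which either $|\Delta_{(s)} W(u+\sigma_s, v+\omega_t)|\geq \delta_1$ or $|\Gamma_{(t)} W(u+\sigma_s, v+\omega_t)|\geq \delta_1$. In the first case, the rectangular version of Lemma \ref{convolution inequality 0} (applied in $\ell_2^{(M,N)}$ with $k = \Delta_{(s)}\sigma_s \leq 2M/K$), combined with the $\ell^1$-bound $\sum_n |\Delta\phi(n)|\leq 10R$, yields
\begin{equation*}
\abs{\Delta_{(s)}Z_{(M,N)}(b\ast\phi)(u+\sigma_s,v+\omega_t)} \leq \frac{2M/K}{M}\cdot \sqrt{B}\cdot 10R \leq \frac{9\delta_1}{10},
\end{equation*}
so one of $(u+\sigma_s,v+\omega_t)$ and $(u+\sigma_{s+1},v+\omega_t)$ satisfies (\ref{conv-ineq-1-rec}) with $\delta = \delta_1/20$. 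If the chosen point has first coordinate equal to $M$, the $M$-quasi-periodicity of $Z_{(M,N)}$ in the first variable wraps it back into $[0,M-1]$ without altering the inequality.

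In the second case I would pass to the Fourier side via (\ref{Zak and Fourier-rec}), which converts the vertical jump of $Z_{(M,N)}(b)$ into a ``horizontal'' jump of $Z_{(N,M)}(\mathcal{F}_{(M,N)}b)$ at the corresponding flipped point. The computation of Lemma \ref{conv-lemma} transcribes directly: the phase factor introduced by (\ref{Zak and Fourier-rec}) is of the form $e^{2\pi i(u+\sigma_s)(\omega_{t+1}-\omega_t)/(MN)}$ and contributes an error of at most $4\pi\sqrt{B}/L \leq \delta_1/20$, where the bound $|\omega_{t+1}-\omega_t| \leq 2N/L$ together with the lower bound on $L$ are used. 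One therefore has a jump of size at least $19\delta_1/20$ of $Z_{(N,M)}(\mathcal{F}_{(M,N)}b)$. The rectangular Lemma \ref{convolution inequality 0} in $\ell_2^{(N,M)}$, with $\sum_n|\Delta\psi(n)|\leq 10Q$, bounds the corresponding jump of the convolution $\mathcal{F}_{(M,N)}b \ast \psi$ by $9\delta_1/10$, yielding a point satisfying (\ref{conv-ineq-2-rec}) with $\delta = \delta_1/40$; a final wrap via the $N$-quasi-periodicity of $Z_{(N,M)}$ places the chosen point in the correct fundamental domain.

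The main obstacle is bookkeeping rather than mathematics: in the Fourier case one must carefully track which coordinate lives in $[0,M-1]$ and which in $[0,N-1]$, particularly after using the Zak--Fourier relation (\ref{Zak and Fourier-rec}), which interchanges the roles of the two. Once this is handled, the rest of the argument is a direct transcription of the square-case proof.
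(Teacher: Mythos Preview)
Your proposal is correct and follows essentially the same approach as the paper. Indeed, the paper's own proof of this lemma consists of a single sentence stating that the argument of Lemma \ref{conv-lemma} goes through verbatim once one takes $\sigma_s = [sM/K]$ and $\omega_t = [tN/L]$; you have carried out precisely this transcription, with the correct adjustments to the normalizations in the rectangular convolution inequality and to the phase estimate coming from \eqref{Zak and Fourier-rec}.
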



The proof of the above lemma is, essentially word by word, the same as the one for Lemma \ref{conv-lemma}.  Indeed, the same  argument works on a rectangular lattice as soon as  one makes the  choices $\sigma_s = [sM/K]$ and $\omega_t = [tN/L]$ for $s,t \in \Z$.

Similarly, the proof of the   quantitative Balian--Low theorem in finite dimensions over finite rectangular lattices  (Theorem  \ref{finite quantitative BLT 1-rec}) is, essentially word by word, the same   as the one for the  finite square lattice case  (Theorem \ref{finite quantitative BLT 1 -rb}). The only changes needed in the proof are the choices $\phi = S_{M} P_{N} \Phi$ and $\psi = S_{N} P_{M} \Psi$.

\bibliographystyle{amsplain}

\def\cprime{$'$} \def\cprime{$'$} \def\cprime{$'$} \def\cprime{$'$}
\providecommand{\bysame}{\leavevmode\hbox to3em{\hrulefill}\thinspace}
\providecommand{\MR}{\relax\ifhmode\unskip\space\fi MR }
\providecommand{\MRhref}[2]{%
  \href{http://www.ams.org/mathscinet-getitem?mr=#1}{#2}
}
\providecommand{\href}[2]{#2}

\end{document}